\DeclareMathAlphabet{\mathpzc}{OT1}{pzc}{m}{it}
\DeclareMathOperator{\Z}{\mathbb{Z}}
\DeclareMathOperator{\bk}{\Bbbk}
\DeclareMathOperator{\R}{\mathbb{R}}
\DeclareMathOperator{\C}{\mathbb{C}}
\DeclareMathOperator{\G}{\mathbb{G}}
\DeclareMathOperator{\A}{\mathbb{A}}
\DeclareMathOperator{\K}{\mathbb{K}}
\newcommand{\CX}{\mathscr{X}}
\DeclareMathOperator{\pt}{\mathrm{pt}}
\DeclareMathOperator{\id}{\mathrm{id}}
\DeclareMathOperator{\pr}{\mathrm{pr}}
\DeclareMathOperator{\Hom}{\mathrm{Hom}}
\DeclareMathOperator{\sHom}{\mathpzc{Hom}}
\DeclareMathOperator{\Ext}{\mathrm{Ext}}
\DeclareMathOperator{\loc}{\mathrm{Loc}}
\DeclareMathOperator{\Rep}{\mathrm{Rep}}
\DeclareMathOperator{\str}{\mathcal{S}}
\DeclareMathOperator{\Vect}{\mathrm{Vect}}
\DeclareMathOperator{\Irr}{\mathrm{Irr}}
\DeclareMathOperator{\Aut}{\mathrm{Aut}}
\DeclareMathOperator{\End}{\mathrm{End}}
\DeclareMathOperator{\Sym}{\mathrm{Sym}}
\newcommand{\mi}{\mathfrak{m}}
\DeclareMathOperator{\car}{\mathrm{char}}
\DeclareMathOperator{\De}{\Delta}
\DeclareMathOperator{\Na}{\nabla}
\DeclareMathOperator{\fff}{\mathscr{F}}
\DeclareMathOperator{\fgg}{\mathscr{G}}
\DeclareMathOperator{\fqq}{\mathscr{Q}}
\DeclareMathOperator{\fhh}{\mathscr{H}}
\DeclareMathOperator{\fll}{\mathscr{L}}
\DeclareMathOperator{\fbk}{\underline{\Bbbk}}
\DeclareMathOperator{\For}{\mathrm{For}}
\DeclareMathOperator{\IC}{\mathrm{IC}}
\DeclareMathOperator{\Perv}{P_{\str}}
\newcommand{\Piso}{\mathrm{P}^t_{\mathrm{iso}}}
\newcommand{\Pco}{\mathrm{P}^t_{\mathrm{co}}}
\DeclareMathOperator{\indlim}{\varinjlim}
\DeclareMathOperator{\Cocar}{X_{\ast}}
\DeclareMathOperator{\Car}{X^{\ast}}
\DeclareMathOperator{\modu}{\mathrm{-mod}}
\DeclareMathOperator{\rak}{\mathrm{rk}}
\begin{document}
\numberwithin{equation}{section}
\newtheorem{thm}{Theorem}[section]
\newtheorem{lem}[thm]{Lemma}
\newtheorem{prop}[thm]{Proposition}
\newtheorem{cor}[thm]{Corollary}
\newtheorem{conj}[thm]{Conjecture}
\theoremstyle{definition}
\newtheorem{defn}[thm]{Definition}
\newtheorem{assump}[thm]{Assumption}
\newtheorem{conv}[thm]{Convention}
\theoremstyle{remark}
\newtheorem{rk}[thm]{Remark}
\newtheorem{ex}[thm]{Example}

\title{Perverse monodromic sheaves}
 
\author{Valentin Gouttard}
\address{Universit\'e Clermont Auvergne, CNRS, LMBP, F-63000 Clermont-Ferrand, France.}
\email{valentin.gouttard@uca.fr}

\begin{abstract}
    We introduce and study the category of modular (i.e.~with coefficient of positive characteristic) monodromic perverse sheaves on complex stratified $T$-varieties, with $T$ a complex algebraic torus. In particular, we show that under appropriate assumptions this category has a natural highest weight structure.
\end{abstract}

\maketitle


\section{Introduction}
\subsection{Main result}
Consider a complex algebraic torus $T$ and a stratified $T$-variety $(X, \str)$. We fix an algebraically closed (coefficient) field $\bk$, of positive characteristic $\ell >0$. We want to investigate in this paper the category $\mathrm{P}_{\str}(X, \bk)$ of perverse sheaves in $D^b_{\str}(X, \bk)$. 

The monodromy construction, due to Verdier (see \cite[\S 5]{Ve}) allows one to define an action of the group of cocharacters $\Cocar(T)$ of $T$ on each object of the category $D^b_{\str}(X, \bk)$. More precisely, we obtain an algebra morphism 
$$\varphi_{\fff} : \bk[\Cocar(T)]\rightarrow \End(\fff)$$
for any $\fff \in D^b_{\str}(X, \bk)$. It is a classical fact that the maximal ideals in $\bk[\Cocar(T)]$ are parametrized by the elements of the dual $\bk$-torus $T^\vee_{\bk}$. Considering such an element $t$ and the associated maximal ideal $\mi_t$, we can consider the full subcategory $D^b_{\str}(X, \bk)_{[\underline{t}]}$ of $D^b_{\str}(X, \bk)$ whose objects are those $\fff$ such that $\varphi_{\fff}$ factors through the quotient $\bk[\Cocar(T)]/\mi_t$. In particular, we obtain a full subcategory $\mathrm{P}_{\str}(X, \bk)_{[\underline{t}]}$ of $\mathrm{P}_{\str}(X, \bk)$. 
The inclusion of the closure of strata allows us to endow the set $\str$ with a poset structure $(\str, \leq)$.
The main result of this paper is then that, under
the hypothesis that each strata is isomorphic to a product of the form $\A^r \times T$ for some $r\in \Z_{\geq 0}$, the category $\mathrm{P}_{\str}(X, \bk)_{[\underline{t}]}$ is a highest weight category for any $t \in T^\vee_{\bk}$.

\subsection{Motivation}

Let $\mathfrak{g}$ be a complex semisimple Lie algebra.
In \cite{So}, Soergel proved two theorems (the Endomorphismensatz and the Struktursatz) giving a description of the principal block $\mathscr{O}_0$ of the BGG-category $\mathscr{O}$ of $\mathfrak{g}$ in terms of modules (the Soergel modules) over the endomorphism ring of the projective cover of the unique simple Verma module in $\mathscr{O}_0$. 
For a complex semisimple algebraic group $G$ with maximal torus $T$ and a Borel subgroup $B = T \ltimes U$ such that $\mathfrak{g}$ is the Lie algebra of $G$, Beilinson--Bernstein localization tells us that the category $\mathscr{O}_0$ for $\mathfrak{g}$ is equivalent to the category $\mathrm{P}_{(B)}(G/B, \C)$ of $B$-constructible complex perverse sheaves on the flag variety of $G$.

In the recent article \cite{BeR}, Bezrukavnikov and Riche obtained a totally geometric ``Soergel description" of $\mathrm{P}_{(B)}(G/B, \bk)$, for $\bk$ any field, i.e.~they described this category as modules over the endomorphism ring of the projective cover $P_e$ of the skyscraper sheaf at $B/B$. In order to obtain their result, they consider perverse objects on $G/U$. For any object in $D^b_{(B)}(G/U, \bk)$, we can define a monodromy morphism, coming from the natural right action of $T$ on $G/U$. We then obtain a family of subcategories in $\mathrm{P}_{(B)}(G/U, \bk)$ parametrized by the elements $t$ of the dual torus $T^\vee_{\bk}$; for $t = 1$, we end up with (a category canonically equivalent to) $\mathrm{P}_{(B)}(G/B, \bk)$. The case $t=1$ was the main point of interest of \cite{BeR}; in this paper, we begin to explore the categories given by other parameters $t$. We outline below the structure of the article

\subsection{Verdier's monodromy} In section \ref{section A result of Verdier}, we start by recalling Verdier's construction of the monodromy morphism for any object $\fff$ in our derived constructible category. In \S \ref{section preliminaries}, \ref{section Verdier's proposition} and \ref{section proof of verdier}, we elaborate on the constructions of \cite{Ve}. In \ref{sectiopn def }, we introduce the monodromy map and investigate some elementary properties. We then define the monodromic categories in \ref{subsection : Setting and Notations}.

\subsection{Lusztig and Yun monodromy} 
Section \ref{section Lusztig Yun} is devoted to the introduction of one of the key tools that allow us to prove our result, the equivariant monodromic category due to Lusztig--Yun.
We consider a slightly more general setting: $X$ is an $H$-variety, with $H$ any connected complex algebraic group.
Following the constructions of \cite{LY}, we define equivariant monodromic categories. We consider pairs $(\nu, \chi)$ where $\nu :  \widetilde{H}\rightarrow H$ is a finite central isogeny and $\chi$ a character of the kernel of $\nu$. Considering the $\chi$-isotypic component of the pushforward of the constant sheaf along $\nu$, we obtain a rank-one local system on $H$.
Such data allows us to define a ``character-equivariant" category, namely the Lusztig--Yun monodromic equivariant category. This is done in \S \ref{subsection preliminaries LY} and \S \ref{subsection equivariant monodromic LY}. One problem is that different pairs $(\nu, \chi)$ could lead to the same local system. The rest of section \ref{section Lusztig Yun} is devoted to dealing with this lack of uniqueness: under suitable assumptions on the characteristic of $\bk$, we show that a different pair leads to canonically equivalent categories.

\subsection{Perverse monodromic sheaves} In section \ref{section Perverse monodromic sheaves on stratified $T$-varieties}, we come back to the setting of stratified $T$-varieties. Let us fix a $t$ in $T^\vee_{\bk}$; we associate to this element a local system $\fll^T_t$ on $T$. The constructions of the preceding sections apply and we obtain two kinds of monodromic categories: a topological one (Verdier's construction) and an equivariant one (Lusztig--Yun construction). In both cases we can consider perverse objects; there is a major difference though: as explained in \cite{LY}, there is a natural perverse $t$-structure on the equivariant monodromic category, whose heart $\mathrm{P}^{\mathrm{LY}}_{ T,\str}(X, \bk)_{\fll^T_{t}}$ is (defined to be) the category of perverse monodromic equivariant sheaves. On the other side, our topological monodromic category $D^b_{\str}(X, \bk)_{[\underline{t}]}$ is not even a triangulated category, so the notion of $t$-structure does not make sense here. In particular the category $\mathrm{P}_{\str}(X, \bk)_{[\underline{t}]}$ is not really well behaved as a full subcategory of $D^b_{\str}(X, \bk)$, e.g.~it is not stable under extension. The next paragraph gives a way to solve this problem.
 
 \subsection{Equivalences} It is well known that the category of equivariant perverse sheaves admits several equivalent descriptions, as explained e.g. in \cite[\S A.1]{BaR}, in particular, a perverse object in the constructible derived category is equivariant if and only if its monodromy morphism factors through the quotient of $\bk[\Cocar(T)]$ by its natural augmentation ideal. Here, we consider perverse objects whose monodromy morphism factors through a quotient defined by another augmentation ideal, associated to some character of $\Cocar(T)$. In analogy with the usual equivariant case, we can give four different definitions of perverse monodromic sheaves. We show in \S 3.3 that these four definitions lead to equivalent categories; in particular, the categories $\Perv(X, \bk)_{[\underline{t}]}$ and $\mathrm{P}^{\mathrm{LY}}_{ T,\str}(X, \bk)_{\fll^T_{t}}$ are equivalent.
 
 \subsection{Highest-weight structure} In section \ref{section Highest weight structure of perverse monodromic sheaves}, we investigate the structure of the category of perverse monodromic sheaves under the assumption that each strata is isomorphic to a product of the form $\A^r \times T$ for some $r\in \Z_{\geq 0}$ (depending on the strata). This is an adaptation in the monodromic case of the main result of \cite[\S 3.2]{BGS}. There, the authors showed the following: the category of perverse sheaves on an algebraic variety stratified by linear affine spaces has a highest weight structure. 
 Our case is very similar. In this setting, we can introduce ``standard" and ``costandard" objects in analogy with \cite{BGS}. As stated above, the inclusion of closures of strata defines a partial order $\leq$ on $\str$,  and endows $\str$ with a poset structure. We show here that the category of perverse monodromic sheaves on $X$ together with the constructed standard and costandard objects and with the poset $(\str, \leq)$ form an highest weight category.
 
 \subsection{Further consideration}
 In an upcoming paper, we will investigate monodromic perverse sheaves on the quotient variety $G/U$ for $G$ a reductive group and $U$ the unipotent radical of some Borel subgroup. Generalizing the consideration of \cite{BeR}, we will give some ``Soergel description" of these categories.

\subsection{Acknowledgments} I am grateful to Zhiwei Yun for helpful insights about subsections \ref{section general consideration}-\ref{section fully faithfulness pnu}. I also thank deeply Pramod Achar and Simon Riche for endless patience and paying an extraordinary attention to this work, which could never have been done without their help.
\section{The topological monodromic category} \label{section A result of Verdier}
%
\subsection{Preliminaries} \label{section preliminaries}
In all this paper, we will assume that $\bk$ is the algebraic closure of a finite field. We let $\ell >0$ denote the characteristic of $\bk$.

We consider $A$ a complex algebraic torus of rank $r \geq 1$. We will use several times the fact that we can choose a trivialization $A \cong (\C^\ast)^r$ so we fix such a trivialization once and for all.

For any topological space $Y$, denote by $D^b(Y, \bk )$ the bounded derived category of sheaves of $\bk$-vector spaces on $Y$. Also, let $\loc(Y, \bk)$ be the abelian category of finite dimensional $\bk$-local systems on $Y$ (i.e. $\bk$-local systems on $Y$ with finite dimensional stalks).

Let $X$ be a complex algebraic variety endowed with an (algebraic) action of $A$. We fix a finite algebraic stratification $\str$ (see \cite[Definition 3.2.23]{CG}) such that each $S \in \str$ is $A$-stable.

Let $D^b_{\str}(X, \bk )$ denote the $\mathcal{S}$-constructible bounded derived category of sheaves of $\bk$-vector spaces on $X$.

We denote by $\pr_2 : A\times X \rightarrow X$ the projection and $a : A\times X \rightarrow X$ the action morphism. Let $e_n : A \rightarrow A$ be the morphism $t \mapsto t^n$. Finally, denote by $a(n)$ the composition $a \circ (e_n \times \id)$.

For a morphism $f$ between two algebraic varieties, the associated functors of pullback and (proper) pushforward will almost always be understood as derived functors. Thus, we will write $f_{\ast}$ instead of $Rf_{\ast}$ (and similarly for $f_!$ and $f^{\ast}$). In some places, we will have to consider regular, non-derived pushforward; the notations will then be $f^\circ_\ast$ and $f^\circ_!$ for the non-derived version.

\begin{lem}  \label{lemma automorphism finite order}
Let $V$ be a finite dimensional $\bk$-vector space. Then any $\phi \in GL_{\bk}(V)$ is of finite order.
\end{lem}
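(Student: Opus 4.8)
The plan is to reduce everything to the fact, fixed at the start of \S\ref{section preliminaries}, that $\bk$ is the algebraic closure of a finite field, hence an algebraic extension of its prime field $\F_\ell$. The point is that a single automorphism $\phi$ only involves finitely many elements of $\bk$, so it actually lives inside a finite subfield, and over a finite field the relevant automorphism group is finite.

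Concretely, first I would choose a $\bk$-basis of $V$, giving an identification $GL_\bk(V)\cong GL_n(\bk)$ with $n=\dim_\bk V$. Let $(a_{ij})_{1\le i,j\le n}$ be the matrix of $\phi$ and $(b_{ij})_{1\le i,j\le n}$ the matrix of $\phi^{-1}$ in this basis, and let $\F$ be the subfield of $\bk$ generated over $\F_\ell$ by these finitely many elements $a_{ij}, b_{ij}$. Since $\bk$ is the algebraic closure of $\F_\ell$, each $a_{ij}$ and $b_{ij}$ is algebraic over $\F_\ell$, so $\F$ is a finitely generated algebraic extension of $\F_\ell$, hence of finite degree, hence a finite field $\F\cong\F_q$ with $q$ a power of $\ell$. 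Both $\phi$ and $\phi^{-1}$ then preserve the $\F_q$-span $V_0$ of the chosen basis, so $\phi$ restricts to an element of $GL_{\F_q}(V_0)\cong GL_n(\F_q)$, which is a finite group. Therefore $\phi|_{V_0}$ has finite order $m$, and since $V=\bk\otimes_{\F_q}V_0$ with $\phi=\id_\bk\otimes(\phi|_{V_0})$, we conclude $\phi^m=\id_V$.

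I do not expect any genuine obstacle here; the only thing to be careful about is that the hypothesis ``$\bk$ is the algebraic closure of a finite field'' is really needed, as over $\overline{\Q}$ or $\C$ the statement fails (already for multiplication by a scalar that is not a root of unity). This is also why I prefer to keep track of $\phi^{-1}$ explicitly, so that the finite subfield $\F$ is chosen large enough for $\phi$ to define an honest automorphism of the $\F$-lattice $V_0$, rather than merely an endomorphism.
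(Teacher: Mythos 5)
Your argument is correct and is essentially the paper's proof, just written out in more detail: both fix a basis to identify $GL_\bk(V)$ with $GL_n(\bk)$ and then use that $\bk$, being an algebraic closure of a finite field, is the union of its finite subfields, so that any single matrix lands in some $GL_n(\F_q)$, a finite group. (Keeping track of $\phi^{-1}$ is harmless but unnecessary: a matrix over a finite subfield $\F\subseteq\bk$ with nonzero determinant in $\bk$ already has determinant in $\F^\times$, so it is automatically invertible over $\F$.)
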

\begin{proof}
Fixing a basis in $V$, we obtain a group isomorphism $GL_{\bk}(V) \cong GL_{{\dim (V)}}(\bk)$. Now, since $\bk$ is the algebraic closure of a finite field, it is in particular the union of its finite subfields. This implies that $GL_{\dim (V)}(\bk)$ is the union of its finite subgroups. The result follows.
\end{proof}

We recall below a well-known theorem that we will use many times. A space $X$ is said semi-locally simply connected if each point $x \in X$ has a neighborhood $U_x$ such that any loop in $U_x$ is homotopic to a point when viewed as a path in $X$. Any complex algebraic variety is semi-locally simply connected.
\begin{thm} \label{theoreme representation systèmes locaux}
Let $Z$ be a connected, locally path connected, semi-locally simply connected topological space. Let $z_0 \in Z$ be any base-point. There is a canonical equivalence of categories 
$$\loc(Z,\bk) \cong \bk[\pi_1(Z, z_0)]\modu$$
where the right-hand side denotes the category of finite-dimensional $\bk$-represen\-tations of $\pi_1(Z, z_0)$.
Consider $(Y, y_0)$ another connected, locally path connected, semi-locally simply connected pointed topological space and $f : (Z, z_0) \rightarrow (Y,y_0)$ a continuous map of pointed spaces.
 The map $f$ induces a group morphism $$ \pi_1(f): \pi_1(Z,z_0) \rightarrow \pi_1(Y,y_0)$$ and the functor $f^\ast: \loc(Y,\bk) \rightarrow \loc(Z,\bk)$ corresponds to the restriction of scalars along this map at the level of representations: we have a commutative diagram 
 $$\xymatrix{\loc(Y,\bk)\ar[d]_{\wr} \ar[rr]^{f^\ast} & & \loc(Z,\bk)\ar[d]_{\wr} \\
\bk[\pi_1(Y, y_0)]\modu \ar[rr]_{Res_{\pi_1(f)} } & & \bk[\pi_1(Z, z_0)]\modu
}$$
\end{thm}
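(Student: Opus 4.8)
The plan is to reduce the statement to two essentially independent assertions: the equivalence $\loc(Z,\bk) \cong \bk[\pi_1(Z,z_0)]\modu$ for a single space, and the compatibility of pullback with restriction of scalars. For the first, I would construct the functor explicitly via the fiber at the base-point. Given a local system $\fgg$ on $Z$, the stalk $\fgg_{z_0}$ is a finite-dimensional $\bk$-vector space, and path-lifting (well-defined up to homotopy by local triviality plus the hypotheses on $Z$) gives a monodromy action of $\pi_1(Z,z_0)$ on $\fgg_{z_0}$; this produces the functor $\loc(Z,\bk) \to \bk[\pi_1(Z,z_0)]\modu$. For the quasi-inverse, given a representation $\rho$ on $V$, one forms the associated bundle $\widetilde{Z} \times_{\pi_1} V$ using the universal cover $\widetilde{Z}$ (which exists precisely because $Z$ is connected, locally path connected, and semi-locally simply connected), and checks this is a $\bk$-local system. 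The two constructions being mutually quasi-inverse is a standard covering-space computation; I would cite the classical reference (e.g. Spanier, or \cite[Corollaire 5.9]{Ve}-type statements) rather than reprove it, noting only that finite-dimensionality of stalks matches finite-dimensionality of representations by construction.

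For the second part, given $f : (Z,z_0) \to (Y,y_0)$, the induced map $\pi_1(f)$ is the usual one on fundamental groups. The key point is that for a local system $\fhh$ on $Y$, the stalk $(f^\ast\fhh)_{z_0}$ is canonically $\fhh_{f(z_0)} = \fhh_{y_0}$, and a loop $\gamma$ based at $z_0$ acts on this stalk via the monodromy of $\fhh$ along $f\circ\gamma$, i.e.\ along $\pi_1(f)([\gamma])$. This is exactly the statement that the monodromy functor intertwines $f^\ast$ with $Res_{\pi_1(f)}$, giving the commutative square. Here one should be slightly careful that $f^\ast$ at the level of derived categories restricts to an exact functor $\loc(Y,\bk) \to \loc(Z,\bk)$ on the subcategories of local systems (pullback of a local system is a local system, pullback of a short exact sequence of local systems is short exact), so that the diagram makes sense.

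The word ``canonical'' requires a little care: I would make precise that the equivalence depends on the choice of $z_0$ only up to the inner automorphisms induced by change of base-point path, and that all the constructions (monodromy action, associated bundle) are functorial in the evident sense. The main obstacle, such as it is, is not conceptual but bookkeeping: one must verify that path-lifting descends to a well-defined group action (using semi-local simple connectedness to guarantee lifts are homotopy-invariant), and that the associated-bundle construction actually lands in finite-dimensional local systems and is functorial. None of this is deep, so I expect the bulk of the write-up to consist of pinning down conventions (left vs.\ right actions, composition order of paths) so that the square genuinely commutes on the nose rather than up to a sign or an inversion; getting these conventions internally consistent is the only genuinely error-prone step.
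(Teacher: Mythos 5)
The paper offers no proof of this theorem: it is explicitly invoked as a well-known fact (``We recall below a well-known theorem that we will use many times'') with no argument supplied. Your sketch is the standard monodromy/universal-cover proof and is correct; the point you flag about pinning down conventions (left vs.\ right actions, composition order of paths) so that the square commutes on the nose rather than up to an inversion is exactly the bookkeeping that the paper's later uses of this theorem implicitly rely on, so your emphasis there is well placed.
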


This theorem allows us to prove the following useful lemma. In this paper, unless stated otherwise, the fundamental group of any (topological) group will be considered with $1$ as base-point.

\begin{lem} \label{lemme tiré en arrière local devient constant}
Let $\mathscr{L}$ be a $\bk$-local system on $A$. Then there exists an integer $n$ such that $e_n^\ast(\mathscr{L})$ is a constant sheaf.
\end{lem}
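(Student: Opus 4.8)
The plan is to use Theorem \ref{theoreme representation systèmes locaux} to translate the problem into the language of representations of the fundamental group. Fixing the base-point $1 \in A$, the local system $\fll$ corresponds to a finite-dimensional representation $\rho : \bk[\pi_1(A, 1)] \to GL_{\bk}(V)$ for some finite-dimensional $\bk$-vector space $V$. Using the chosen trivialization $A \cong (\C^\ast)^r$, we have $\pi_1(A, 1) \cong \Z^r$, generated by the loops $\gamma_1, \dots, \gamma_r$ around each factor. The map $e_n : A \to A$ sends $t \mapsto t^n$, so on fundamental groups $\pi_1(e_n) : \Z^r \to \Z^r$ is multiplication by $n$; equivalently, it sends each generator $\gamma_i$ to $\gamma_i^n$. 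By the commutative diagram in Theorem \ref{theoreme representation systèmes locaux}, the pulled-back local system $e_n^\ast(\fll)$ corresponds to the representation obtained by restricting $\rho$ along $\pi_1(e_n)$, i.e.~to the representation of $\Z^r$ in which $\gamma_i$ acts by $\rho(\gamma_i)^n$.

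Next I would observe that a local system on $A$ is constant precisely when the corresponding representation of $\pi_1(A,1) \cong \Z^r$ is trivial, i.e.~when every generator acts by the identity. So I need to find an integer $n$ such that $\rho(\gamma_i)^n = \id_V$ for all $i = 1, \dots, r$. This is where Lemma \ref{lemma automorphism finite order} enters: each $\rho(\gamma_i) \in GL_{\bk}(V)$ has finite order, say $n_i$, because $\bk$ is the algebraic closure of a finite field. Taking $n$ to be the least common multiple $n = \operatorname{lcm}(n_1, \dots, n_r)$ (or simply the product $n_1 \cdots n_r$), we get $\rho(\gamma_i)^n = \id_V$ for every $i$, hence the representation corresponding to $e_n^\ast(\fll)$ is trivial.

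Finally I would conclude that, since $e_n^\ast(\fll)$ corresponds to the trivial representation of $\pi_1(A,1)$ under the equivalence of Theorem \ref{theoreme representation systèmes locaux}, it is isomorphic to a constant sheaf (of rank $\dim V$), which is exactly the assertion. There is essentially no real obstacle here: the only thing that requires care is the bookkeeping of the equivalence of categories and the identification $\pi_1(e_n) = (\,\cdot\,)^n$ on $\Z^r$, together with the fact that finiteness of the field of definition of $\bk$ forces the monodromy matrices to have finite order; both of these are handed to us by the preceding results. One should perhaps note that the statement is insensitive to the choice of base-point and trivialization, since constancy of a local system is an intrinsic property; alternatively one works throughout with the fixed trivialization and base-point $1$ as set up in the preliminaries.
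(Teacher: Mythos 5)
Your proposal is correct and follows essentially the same route as the paper: translate to representations of $\pi_1(A)\cong \Z^r$ via Theorem \ref{theoreme representation systèmes locaux}, observe that $e_n^\ast$ acts by $n$-th powering on monodromy matrices, invoke Lemma \ref{lemma automorphism finite order} to get finite order, and take a common multiple of the orders. The only cosmetic difference is that the paper takes the product of the orders rather than their lcm.
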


\begin{proof}
Thanks to theorem \ref{theoreme representation systèmes locaux}, the local system $\mathscr{L}$ corresponds to a $\bk$-representation $V$ of $\pi_1(A)\cong \mathbb{Z}^r$. This representation is entirely determined by the action of the elements $f_i = (0,\ldots,0, \underset{i}{1}, 0, \ldots , 0) \in \mathbb{Z}^r$; this action is thus given by a family $\{g_i\}_{i=1,\ldots, r}$ of commuting elements in $GL_{\bk}(V)$. According to lemma \ref{lemma automorphism finite order}, the $g_i$'s are all of finite order; let $n$ the product of these orders.
For any $\bk$-representation $W$ of $\mathbb{Z}^r$ (equivalently, for any $\bk[x_i, x_i^{-1}\mid i = 1, \ldots, r]$-module) denote by $W_n$ the representation defined in the following way: as $\bk$-vector spaces, $W = W_n$, and the action ``$\;\cdot^n$" is given by 
$$x_i\cdot^n w = x_i^n \cdot w$$
for any $i$.
The functor $e_n^\ast : \loc(A, \bk) \rightarrow \loc(A, \bk)$ corresponds to the functor $\bk[\pi_1(Y, y_0)]\modu \rightarrow \bk[\pi_1(Y, y_0)]\modu, \;W \mapsto W_n$ via the equivalence of theorem \ref{theoreme representation systèmes locaux}.
Thus the action of $f_i$ on $V_n$ is given by the matrix $g_i^n = 1 \in GL_{\bk}(V)$. We get that $V_n$ is trivial and so $e_n^\ast(\mathscr{L})$ is a constant local system.
\end{proof}

\begin{rk}
Take $\mathscr{F}$ in $D^b_{\str}(X, \bk )$. By definition, the restriction to a stratum $S$ of any cohomology object of $\mathscr{F}$ is locally constant. By assumption, each $A$-orbit is contained in one of the strata ; each cohomology object of $\mathscr{F}$ is then locally constant on the $A$-orbits. This remark is in particular valid for a constructible sheaf.
\end{rk}

\begin{lem} \label{tiré en arrière d'un faisceaux devient constant sur les fibres}
Let $\mathscr{F}$ be an $\mathcal{S}$-constructible sheaf on $X$.
There exists an integer $n$ such that $a(n)^\ast(\mathscr{F})$ is constant on each fiber of $\pr_2$.
\end{lem}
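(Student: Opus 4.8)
The plan is to reduce the statement to one $A$-orbit at a time, apply Lemma~\ref{lemme tiré en arrière local devient constant}, and then check that the exponent can be chosen uniformly over each stratum. Here is the reduction. The fibre of $\pr_2$ over a point $x\in X$ is $A\times\{x\}$, which we identify with $A$ via $s\mapsto(s,x)$; under this identification the restriction of $a(n)=a\circ(e_n\times\id)$ to this fibre is the map $s\mapsto s^n\cdot x$, that is, $o_x\circ e_n$, where $o_x\colon A\to X$, $s\mapsto s\cdot x$, is the orbit map. Since each $A$-orbit lies in a single stratum, the image of $o_x$ lies in the stratum $S\in\str$ containing $x$, on which $\mathscr{F}|_S$ is a local system; hence
$$\bigl(a(n)^\ast\mathscr{F}\bigr)\big|_{A\times\{x\}}\;\cong\;e_n^\ast\,o_x^\ast(\mathscr{F}|_S)\in\loc(A,\bk),$$
and it suffices to produce a single $n$, independent of $x\in X$, for which all of these local systems are constant.

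I would organize the search for $n$ stratum by stratum, and --- a variety having finitely many connected components --- component by component, the final $n$ being the least common multiple of the finitely many integers so obtained. Fix a connected component $S^\circ$ of a stratum $S$, with a basepoint $s_0\in S^\circ$ and $V=\mathscr{F}_{s_0}$; since $S$ is $A$-stable, the action restricts to a morphism $\mathfrak{a}\colon A\times S^\circ\to S^\circ$, and $\mathscr{G}:=\mathfrak{a}^\ast(\mathscr{F}|_{S^\circ})$ is a local system on $A\times S^\circ$ whose restriction to the fibre $A\times\{x\}$ is exactly $o_x^\ast(\mathscr{F}|_{S^\circ})$ for every $x\in S^\circ$. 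The key point is the product decomposition $\pi_1(A\times S^\circ,(1,s_0))=\pi_1(A,1)\times\pi_1(S^\circ,s_0)$: if $f_1,\dots,f_r$ denote the standard generators of $\pi_1(A,1)\cong\Z^r$ and $\tau_i\in GL_{\bk}(V)$ the monodromy of $\mathscr{G}$ along $(f_i,1)$, then Lemma~\ref{lemma automorphism finite order} gives that each $\tau_i$ has finite order; set $N_{S^\circ}:=\mathrm{lcm}_i\,\mathrm{ord}(\tau_i)$.

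Granting this, let $n$ be divisible by $N_{S^\circ}$. By the functoriality statement in Theorem~\ref{theoreme representation systèmes locaux}, applied to $e_n\times\id$ (whose effect on $\pi_1(A)\cong\Z^r$ is multiplication by $n$, exactly as in the proof of Lemma~\ref{lemme tiré en arrière local devient constant}), the monodromy of $(e_n\times\id)^\ast\mathscr{G}$ along $(f_i,1)$ is $\tau_i^n=1$. Restricting to a fibre $A\times\{x\}$ with $x\in S^\circ$: a loop $f_i$ there, based at $(1,x)$ and conjugated along the path $u\mapsto(1,\gamma(u))$ (for $\gamma$ a path in $S^\circ$ from $s_0$ to $x$) back to the basepoint $(1,s_0)$, represents precisely $(f_i,1)\in\pi_1(A)\times\pi_1(S^\circ)$ --- this is where the product decomposition does the work of making the exponent uniform in $x$ --- so $\bigl((e_n\times\id)^\ast\mathscr{G}\bigr)\big|_{A\times\{x\}}=e_n^\ast\,o_x^\ast(\mathscr{F}|_{S^\circ})$ has trivial monodromy, hence is constant. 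Taking $n$ to be the least common multiple of the $N_{S^\circ}$ over all components $S^\circ$ of all strata $S\in\str$ then gives an $n$ that works simultaneously for every $x\in X$, which is the assertion.

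The main obstacle is precisely this last uniformity: a priori Lemma~\ref{lemme tiré en arrière local devient constant} supplies, for each $x$ separately, some integer $n_x$, and one must see that these can be bounded over the (infinite) set of points of a stratum. The clean way to do so is to replace the individual orbit maps $o_x$ by the single morphism $\mathfrak{a}\colon A\times S^\circ\to S^\circ$ and exploit $\pi_1(A\times S^\circ)=\pi_1(A)\times\pi_1(S^\circ)$ as above; alternatively one could compare $o_x$ and $o_{x'}$ by a free homotopy and a change-of-basepoint isomorphism to see that their pullbacks of $\mathscr{F}|_{S^\circ}$ have conjugate, hence equal-order, monodromy operators, reaching the same conclusion.
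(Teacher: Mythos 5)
Your proof is correct and follows essentially the same route as the paper's: both reduce to a single stratum, exploit the product decomposition $\pi_1(A\times S)\cong\pi_1(A)\times\pi_1(S)$ so that the $\pi_1(A)$-monodromy of $a^\ast(\mathscr{F}|_S)$ is a commuting $r$-tuple of finite-order operators (Lemma \ref{lemma automorphism finite order}), and take $n$ to be a common multiple of their orders so that $e_n$ kills exactly that part. The extra care you take about connected components is fine but not strictly needed if, as is usual for an algebraic stratification, the strata are taken connected; the paper simply fixes a basepoint $x_0\in S$ and argues directly with the representation of $\pi_1(A)\times\pi_1(S,x_0)$, obtaining in one stroke the (slightly stronger) conclusion that $a(n)^\ast(\mathscr{F})\cong\pr_2^\ast\fgg$ over $A\times S$, whereas you deduce constancy fibre by fibre.
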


\begin{proof}
Since $\mathcal{S}$ is finite, it suffices to show the lemma for $\mathscr{F}_{\mid S}$ for any $S \in \mathcal{S}$. 
We can therefore assume that $\mathscr{F}$ is a local system on $X$. Then $a^\ast(\mathscr{F})$ is a local system on $A \times X$ and thus corresponds to a $\bk$-representation $V$ of $\pi_1(A \times X,(1, x_0)) \cong \mathbb{Z}^r\times \pi_1(X,x_0)$ (for any base-point $x_0 \in X$). 
Thanks to theorem \ref{theoreme representation systèmes locaux}, the sheaf $a(n)^\ast(\fff)$ corresponds under the equivalence of theorem \ref{theoreme representation systèmes locaux} to the representation $\widetilde{V}$ of $\mathbb{Z}^r\times \pi_1(X,x_0)$, with $\widetilde{V}= V$ as $\bk$-vector space and $(m, [\alpha])$ acting as $(nm, [\alpha])$.
If we view $\widetilde{V}$ as a representation of $\mathbb{Z}^r$ (via the canonical injection $\mathbb{Z }^r\hookrightarrow \mathbb{Z}^r \times \pi_1(X, x_0)$), we get the restriction of scalar of the representation associated to $\fff$ along the group morphism $\pi_1(A) \rightarrow \pi_1(A)\times \pi_1(x_0, X)$ induced by the map 
$$A \rightarrow A\times\{x_0\} \hookrightarrow A \times X, \; t \longmapsto (t^n, x_0).$$
We deduce from lemma \ref{lemme tiré en arrière local devient constant} that for $n$ sufficiently large, this
corresponds to the trivial representation of $\mathbb{Z}^r$; the morphism defining the representation $ \widetilde{V}$ of $\mathbb{Z}^r \times \pi_1(X,x_0)$ hence factors through the projection to $\pi_1(X,x_0)$. This says exactly that  $a(n)^\ast(\mathscr{F})$ is of the form $\pr_2^\ast(\fgg)$ for $\mathscr{G}$ a local system on $X$.
\end{proof}

 \begin{lem} \label{tiré en arrière devient un pr2}
 
 Let $\mathscr{F}$ be a constructible sheaf on $A \times X$ that is constant on each fiber of $\pr_2$. Then there exists a sheaf $\mathscr{G}$ on $X$ such that 
 $$\mathscr{F}\cong \pr_2^\ast(\fgg).$$
 \end{lem}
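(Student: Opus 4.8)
The plan is to realise $\fgg$ as an ordinary (non-derived) direct image and to show that the adjunction counit does the job. I would set $\fgg := (\pr_2)^\circ_\ast\fff$; the adjunction between $\pr_2^\ast$ and $(\pr_2)^\circ_\ast$ gives a canonical morphism $\varepsilon\colon\pr_2^\ast\fgg\to\fff$, and it suffices to prove that $\varepsilon$ is an isomorphism, which may be checked on stalks. Fix $(a,x)\in A\times X$. Since $\pr_2^\ast$ is exact and ordinary direct image commutes with restriction to opens, $(\pr_2^\ast\fgg)_{(a,x)}=\fgg_x=\varinjlim_{V\ni x}\Gamma(A\times V,\fff)$ (colimit over a neighbourhood basis of $x$), and $\varepsilon_{(a,x)}$ factors as the restriction map $\varinjlim_{V}\Gamma(A\times V,\fff)\to\Gamma(A\times\{x\},\fff|_{A\times\{x\}})$ followed by the germ map $\Gamma(A\times\{x\},\fff|_{A\times\{x\}})\to\fff_{(a,x)}$. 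The latter is an isomorphism since $\fff|_{A\times\{x\}}$ is a constant sheaf on the connected space $A$, so the whole problem reduces to the ``base change'' statement
\[
(\star)\qquad\varinjlim_{V\ni x}\Gamma(A\times V,\fff)\;\xrightarrow{\ \sim\ }\;\Gamma\bigl(A\times\{x\},\fff|_{A\times\{x\}}\bigr).
\]

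The statement $(\star)$ is a base-change assertion for $\pr_2$ along $\{x\}\hookrightarrow X$. For an arbitrary constructible sheaf it cannot hold, because $\pr_2$ is not proper; but it does hold for sheaves of the form $\pr_2^\ast\mathscr{N}$ with $\mathscr{N}$ constructible on $X$, by the projection formula: one has $\varinjlim_{V\ni x}R\Gamma(A\times V,\pr_2^\ast\mathscr{N})\cong R\Gamma(A,\underline{\mathscr{N}_x})$, where the degree-zero part uses that $A$ is connected (so $(\pr_2)^\circ_\ast\fbk\cong\fbk$) and the higher degrees use that $H^\bullet(A;\bk)$ is a free $\bk$-module. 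So the real point is to show that our $\fff$, which is constant on the fibres of $\pr_2$, is built locally on $X$ out of such pullback pieces, and then to feed this through the recollement.

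Concretely, for $(\star)$ I would argue by induction, peeling off strata. We may assume $\fff$ is constructible with respect to a product stratification $\{A\times S\mid S\in\str\}$ — this is the situation in which the lemma is applied, where $\fff=a(n)^\ast\fgg'$ for an $\str$-constructible $\fgg'$ and each stratum is $A$-stable (cf. Lemma~\ref{tiré en arrière d'un faisceaux devient constant sur les fibres}). As $X$ is a complex variety, $x$ has a cofinal system of contractible ``distinguished'' open neighbourhoods $V$ over which the stratification is conical; shrinking $V$, the only strata meeting $V$ are those whose closure contains $x$, so the stratum $S_0\ni x$ is relatively closed in $V$ and $S_0\cap V$ is contractible. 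The induction is on the number of strata meeting $V$. In the base case a single stratum meets $V$, so $\fff|_{A\times V}$ is a local system on $A\times V$; since $\pi_1(A\times V)\cong\pi_1(A)\times\pi_1(V)$ and the restriction of its monodromy to the $\pi_1(A)$-factor is trivial (that restriction is the monodromy of $\fff|_{A\times\{x\}}$, which is constant), Theorem~\ref{theoreme representation systèmes locaux} gives $\fff|_{A\times V}\cong\pr_2^\ast\fll$ for a local system $\fll$ on $V$, and then $\varinjlim_V\Gamma(A\times V,\pr_2^\ast\fll)=\varinjlim_V\Gamma(V,\fll)=\fll_x$ matches the right-hand side of $(\star)$. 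For the inductive step I would use the short exact sequence of sheaves on $A\times V$
\[
0\longrightarrow\tilde j_!\bigl(\fff|_{A\times(V\setminus S_0)}\bigr)\longrightarrow\fff|_{A\times V}\longrightarrow\tilde i_\ast\bigl(\fff|_{A\times(S_0\cap V)}\bigr)\longrightarrow 0,
\]
where $\tilde j,\tilde i$ are the base changes along $\pr_2$ of the inclusions of $V\setminus S_0$ and $S_0\cap V$. By induction $\fff|_{A\times(V\setminus S_0)}\cong\pr_2^\ast\fhh$, and by the base case $\fff|_{A\times(S_0\cap V)}$ is constant; using the base-change identities $\tilde j_!\pr_2^\ast\cong\pr_2^\ast j_{0!}$ and $\tilde i_\ast\pr_2^\ast\cong\pr_2^\ast i_{0\ast}$, both outer terms are pullbacks $\pr_2^\ast(-)$ of constructible sheaves on $V$. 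Applying $\varinjlim_V R\Gamma(A\times V,-)$ and invoking the projection-formula base change of the previous paragraph, the $\tilde j_!$-term contributes $R\Gamma(A,\underline{(j_{0!}\fhh)_x})=0$ (its stalk at $x$ vanishes, since $x\in S_0$) and the $\tilde i_\ast$-term contributes $R\Gamma\bigl(A\times\{x\},\fff|_{A\times\{x\}}\bigr)$, whence $\varinjlim_V R\Gamma(A\times V,\fff)\xrightarrow{\sim}R\Gamma(A\times\{x\},\fff|_{A\times\{x\}})$; taking $H^0$ gives $(\star)$, hence the lemma.

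The step I expect to be the main obstacle is precisely this interplay: because $A=(\C^\ast)^r$ is not compact there is no proper base change to obtain $(\star)$ directly, so the argument has to exploit — through the recollement and Theorem~\ref{theoreme representation systèmes locaux} — that the hypothesis ``constant on the fibres of $\pr_2$'' forces every graded piece of $\fff$ to be a genuine pullback $\pr_2^\ast(-)$, after which the projection formula along $\pr_2$ (where the connectedness of $A$ and the free structure of $H^\bullet(A;\bk)$ enter) brings everything back to sheaves on $X$ near $x$. The parts that are routine but must be done carefully are: the base-change isomorphisms $\tilde j_!\pr_2^\ast\cong\pr_2^\ast j_{0!}$ and $\tilde i_\ast\pr_2^\ast\cong\pr_2^\ast i_{0\ast}$; the facts about distinguished neighbourhoods (in particular that $\varinjlim_{V\ni x}R\Gamma(A\times V,\pr_2^\ast\mathscr{N})$ computes $R\Gamma(A,\underline{\mathscr{N}_x})$ for $\mathscr{N}$ constructible); and the verification that the isomorphism produced in $(\star)$ is indeed the natural restriction map, so that it matches $\varepsilon_{(a,x)}$.
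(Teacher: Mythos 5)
Your proposal takes a genuinely different route from the paper. The paper's proof avoids the non-properness of $\pr_2$ entirely: it applies the derived compact-support pushforward $(\pr_2)_!$, for which proper base change always holds, so the stalk of $(\pr_2)_!\fff$ at $x$ is simply $R\Gamma_c(A\times\{x\},\fff|_{A\times\{x\}})$, concentrated in degrees $r,\dots,2r$ by the constancy hypothesis. Setting $\fgg := \fhh^{2r}((\pr_2)_!\fff)$, the paper uses the adjunction $((\pr_2)_!,\pr_2^!)$ together with the smoothness identification $\pr_2^!\cong\pr_2^\ast[2r]$ to produce a nonzero map $\fff\to\pr_2^\ast\fgg$, and then checks it is an isomorphism fiberwise, which reduces immediately to a rank-one constant sheaf on $A$. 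You instead set $\fgg:=(\pr_2)^\circ_\ast\fff$ and aim for the counit of the $(\pr_2^\ast,(\pr_2)^\circ_\ast)$-adjunction, which forces you to confront the ordinary-direct-image base-change problem $(\star)$ head-on; you resolve it by working on distinguished conical neighbourhoods, a recollement induction on strata, and a K\"unneth/projection-formula statement for $R\Gamma$ on the non-compact space $A\times V$. Both constructions of $\fgg$ must agree (by full faithfulness of $\pr_2^\ast$ on sheaves), so in that sense you are computing the Verdier dual of the paper's object; the paper's route essentially outsources the hard geometric content of $(\star)$ to proper base change plus $\pr_2^!\cong\pr_2^\ast[2r]$, where your route re-derives it by hand.

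Two caveats on your version. First, your argument is implicitly restricted to sheaves constructible for the product stratification $\{A\times S\}$ (you acknowledge this, and it is indeed the only case used in the paper), whereas the paper's proof needs no assumption beyond constancy on fibers. Second, the K\"unneth step $\varinjlim_{V\ni x}R\Gamma(A\times V,\pr_2^\ast\mathscr{N})\cong R\Gamma(A,\underline{\mathscr{N}_x})$ for $\mathscr{N}$ constructible on $V$ is the real work: for $R\Gamma$ (as opposed to $R\Gamma_c$) on the non-compact $A$ one cannot quote a black-box K\"unneth theorem, and you should justify it explicitly — e.g.\ by retracting $A\cong(\C^\ast)^r$ onto the compact $(S^1)^r$ before applying K\"unneth, combined with the standard fact $R\Gamma(V,\mathscr{N})\cong\mathscr{N}_x$ for distinguished $V$. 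With those points filled in, your proof is a valid, if more laborious, alternative to the paper's duality argument.
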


\begin{proof}
Consider the object ${\pr_{2}}_!(\mathscr{F})$. For any $x \in X$, we have an isomorphism ${\pr_2}_!(\mathscr{F})_x \cong R\Gamma_c^{\bullet}(\mathscr{F}_{\mid A \times \{x\}})$ in the bounded derived category of $\bk$-vector spaces. Since $\mathscr{F}_{\mid A \times \{x\}}$ is constant by hypothesis, this cohomology lives in degrees $r, \ldots, 2r$.
We consider the non-zero truncation morphism ${\pr_2}_! (\fff) \rightarrow \fhh^{2r}({\pr_2}_!(\fff))[-2r]$. We use adjunction to obtain a (nonzero) morphism 
$$\fff \longrightarrow \pr_2^! \fhh^{2r}({\pr_2}_!(\fff))[-2r] \cong \pr_2^\ast\fhh^{2r}({\pr_2}_!(\fff)).$$
(We used here the fact that $\pr_2$ is a smooth morphism, so $\pr_2^! \cong \pr_2^\ast[2r]$.)

We check that this is an isomorphism by looking at the stalks. For a point $(z, x) \in A \times X$, we thus have a morphism 
$$\mathscr{F}_{(z,x)} \rightarrow \mathscr{H}^{2r}({\pr_2}_!(\mathscr{F}))_x \cong \mathbf{H}^{2r}_c(A\times \{x\}, \mathscr{F}).$$ We can then assume that $X$ is a one-point space and hence that $\mathscr{F}$ is a constant sheaf. It therefore suffices to check the isomorphism for $\mathscr{F}$ constant of rank 1; in this case, both sides are just $\bk$ (thanks to K\"unneth's formula applied to $A \cong (\C^\ast)^r$). Since our morphism is nonzero, there exists a point $(z,x)$ such that the stalk at $(z,x)$ is nonzero. Since $\fff$ is assumed to be constant, the (global) morphism is nonzero.
\end{proof}

\subsection{Verdier's proposition} \label{section Verdier's proposition}
%

\begin{prop} \label{proposition Verdier 5.1}
Let $\mathscr{F}$ be an object in $D^{b}_{\str}(X, \bk )$. There exists $n \in \mathbb{Z}_{>0}$ and a morphism 
$$\iota(n) : \pr_2^\ast(\mathscr{F}) \longrightarrow a(n)^\ast (\mathscr{F})  $$
such that $\iota_{\mid\{1\}\times X}$ identifies with the identity of $\mathscr{F}$. Any such morphism is an isomorphism.
If $n_1$, $n_2$ are two strictly positive integers and if 
$$\iota(n_1) : \pr_2^\ast(\mathscr{F}) \longrightarrow a(n_1)^\ast (\mathscr{F}), \quad  \iota(n_2) : \pr_2^\ast(\mathscr{F}) \longrightarrow a(n_2)^\ast (\mathscr{F})$$
are two such morphisms, then there exists a strictly positive integer $n_3$, multiple of both $n_1$ and $n_2$, such that 
$$(e_{\frac{n_3}{n_1}}\times \id)^\ast(\iota(n_1))= (e_{\frac{n_3}{n_2}}\times \id)^\ast(\iota(n_2)).$$
Finally, let $\mathscr{G}$ be another complex in $D^{b}_{\str}(X, \bk )$ and $u : \mathscr{F}\rightarrow \mathscr{G}$ be any morphism. There exists $n \in \Z_{>0}$ and two morphisms $\iota_1(n): \pr_2^\ast(\mathscr{F})\rightarrow a(n)^\ast (\mathscr{F}) $ and $\iota_2(n): \pr_2^\ast(\mathscr{G})\longrightarrow a(n)^\ast (\mathscr{G})$ as above such that the following diagram commutes: 
\begin{equation}\label{diagramme verdier proposition 5.1}\vcenter{\xymatrix{
\pr_2^\ast(\mathscr{F})\ar[d]_{\pr_2^\ast(u)} \ar[rr]^{\iota_1(n)}_\sim && a(n)^\ast(\mathscr{F}) \ar[d]^{a(n)^\ast(u)} \\
\pr_2^\ast(\mathscr{G}) \ar[rr]^{\iota_2(n)}_\sim && a(n)^\ast(\mathscr{G}).
}}
\end{equation}
\end{prop}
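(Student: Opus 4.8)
The plan is to reduce everything to a statement about local systems on $A \times X$ (after pulling back along a suitable $e_n \times \id$), where the morphisms in question become morphisms of representations of $\Z^r \times \pi_1(X, x_0)$ that can be compared explicitly. First I would fix a stratum-by-stratum approach: by Lemma \ref{tiré en arrière d'un faisceaux devient constant sur les fibres}, applied to each cohomology sheaf of $\mathscr{F}$ (there are finitely many, and the stratification is finite), there is an integer $n$ such that $a(n)^\ast(\fhh^j(\mathscr{F}))$ is constant on the fibers of $\pr_2$ for every $j$; enlarging $n$ we may assume the same for $\mathscr{G}$. After this pullback, both $a(n)^\ast(\mathscr{F})$ and $\pr_2^\ast(\mathscr{F})$ have cohomology sheaves pulled back from $X$ via $\pr_2$, and similarly for $\mathscr{G}$. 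The key point is that over the point $1 \in A$ both $a(n)^\ast$ and $\pr_2^\ast$ restrict to the identity functor, so we are looking for an isomorphism between two objects of $D^b(A \times X, \bk)$ which agree after restriction to $\{1\} \times X$ and both of which are ``constant along $A$'' in the appropriate derived sense.

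Next I would construct the morphism $\iota(n)$. Using Lemma \ref{tiré en arrière devient un pr2} (or rather a derived-category refinement of its proof, applied cohomology sheaf by cohomology sheaf), one shows that any object of $D^b(A\times X,\bk)$ whose cohomology sheaves are constant on the fibers of $\pr_2$ is (non-canonically) of the form $\pr_2^\ast(\fgg)$ for some $\fgg \in D^b(X,\bk)$; moreover, by the adjunction argument already used in that lemma, the canonical map $\pr_2^\ast \pr_{2\,!}(-)[2r] \to (-)$ (built from the truncation to top cohomology along the fibers) is an isomorphism onto such an object. Composing the inverse of this isomorphism for $a(n)^\ast(\mathscr{F})$ with the one for $\pr_2^\ast(\mathscr{F})$, and normalizing so that the restriction to $\{1\}\times X$ is the identity (which one can always arrange since the restriction is an automorphism of $\mathscr{F}$, hence can be composed away), produces $\iota(n)$; that any morphism $\iota(n)$ with $\iota(n)_{\mid \{1\}\times X} = \id_{\mathscr{F}}$ is automatically an isomorphism follows because a morphism in $D^b$ between two objects with $\pr_2$-locally-constant cohomology is an isomorphism iff it is so on one fiber, as the cone is then $\pr_2$-locally-constant with vanishing stalk somewhere, hence zero. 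For the uniqueness-up-to-further-pullback statement, given $\iota(n_1)$ and $\iota(n_2)$, I would pull both back along $e_{n_3/n_1}\times\id$ and $e_{n_3/n_2}\times\id$ respectively to get two morphisms $\pr_2^\ast(\mathscr{F}) \to a(n_3)^\ast(\mathscr{F})$ which agree on $\{1\}\times X$; their difference is then a morphism between $\pr_2$-locally-constant objects vanishing on $\{1\}\times X$, and one checks that $\Hom$ between such objects injects into the $\Hom$ of the restrictions to $\{1\}\times X$ once $n_3$ is large enough that the relevant local systems on $A$ have become trivial — this is the content forced by Lemma \ref{lemme tiré en arrière local devient constant}. (The precise bookkeeping here — that no new $\Hom$'s appear over $A$ after the local systems trivialize, using that $A \cong (\C^\ast)^r$ has cohomology concentrated in a range where no cancellation can occur — is the step I expect to require the most care.)

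Finally, for the compatibility with a morphism $u : \mathscr{F} \to \mathscr{G}$, I would choose a common $n$ that works simultaneously for $\mathscr{F}$ and $\mathscr{G}$ (in the sense of the first two parts), pick $\iota_1(n)$ and $\iota_2(n)$, and consider the two composites $a(n)^\ast(u)\circ \iota_1(n)$ and $\iota_2(n)\circ \pr_2^\ast(u)$ from $\pr_2^\ast(\mathscr{F})$ to $a(n)^\ast(\mathscr{G})$. Both restrict to $u$ on $\{1\}\times X$, so their difference $d$ restricts to $0$ there; since $\pr_2^\ast(\mathscr{F})$ and $a(n)^\ast(\mathscr{G})$ have $\pr_2$-locally-constant cohomology, by the injectivity statement above (after possibly replacing $n$ by a multiple so the ambient local systems on $A$ are trivial) we conclude $d = 0$, i.e.\ diagram \eqref{diagramme verdier proposition 5.1} commutes. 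The main obstacle throughout is the passage from the abelian statements about local systems (Lemmas \ref{lemme tiré en arrière local devient constant}--\ref{tiré en arrière devient un pr2}) to the derived category: one must either argue cohomology-sheaf-by-cohomology-sheaf and control the spectral sequence, or — which I would prefer — isolate the single clean fact that the restriction functor $\Hom_{D^b(A\times X)}(P, Q) \to \Hom_{D^b(X)}(i_1^\ast P, i_1^\ast Q)$ is injective (even bijective) whenever $P$ and $Q$ have cohomology sheaves that are $\pr_2$-pullbacks, and deduce all three assertions formally from that fact together with the existence of some $\iota(n)$.
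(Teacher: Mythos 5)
Your proposed reduction rests on a ``single clean fact'' that is not true at any fixed $n$: the restriction map
$$\Hom_{D^b_{\str}(A\times X,\bk)}\bigl(\pr_2^\ast\mathscr{F},\,\pr_2^\ast\mathscr{G}\bigr)\longrightarrow \Hom_{D^b_{\str}(X,\bk)}(\mathscr{F},\mathscr{G})$$
is \emph{not} injective. By the K\"unneth computation (carried out in the paper's proof of Lemma \ref{lemme isomorphisme de Ind sur les Hom}), the left-hand side is canonically
$\bigoplus_i \mathbf{H}^i(A,\bk)\otimes_{\bk}\Hom_{D^b_{\str}(X,\bk)}(\mathscr{F},\mathscr{G}[-i])$,
and restriction to $\{1\}\times X$ retains only the $i=0$ summand while annihilating all contributions from $\mathbf{H}^{\geq 1}(A,\bk)$ (which are generically nonzero). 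The extra morphisms over $A$ that you hope to rule out do \emph{not} come from nontrivial local systems on $A$ --- after the pullback your cohomology sheaves are already constant along $A$ --- but from the ordinary topology of $A\cong(\C^\ast)^r$, and enlarging $n$ so that ``the local systems trivialize'' does nothing to them at any finite stage. So the formal deduction you describe in your last paragraph cannot be run as stated.

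What the paper actually establishes (Lemma \ref{lemme isomorphisme de Ind sur les Hom}) is the \emph{colimit} version: $\pr_2^\ast$ induces an isomorphism
$\Hom(\mathscr{F},\mathscr{G})\xrightarrow{\;\sim\;}\varinjlim_n \Hom(\pr_2^\ast\mathscr{F},\pr_2^\ast\mathscr{G})$,
where the transition maps are pullback along $e_{m/n}\times\id$. The higher-cohomology summands do survive at every finite level, but the transition maps act on $\mathbf{H}^1(\G_m,\bk)$ by multiplication by $m/n$; since $\bk$ has positive characteristic $\ell$, these become $0$ once $\ell$ divides the ratio, and that is precisely what kills the extra summands \emph{in the limit}. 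This is the mechanism your proof is missing, and it is where the standing hypothesis on $\car(\bk)$ enters in an essential way. Every step of the proposition (constructing $\iota(n)$, proving it is an isomorphism, the uniqueness up to further $e_{n_3/n_i}$-pullback, and the functoriality square) is carried out in the paper by producing an element of some $\Hom(\pr_2^\ast(-),\pr_2^\ast(-))$, throwing it into the colimit, and then enlarging $n$ so that it comes from $X$. In particular the existence of $\iota(n)$ is not obtained cohomology-sheaf-by-cohomology-sheaf as you suggest (which leaves the connecting maps uncontrolled) but by induction on amplitude using a truncation triangle, invoking the colimit lemma once more to straighten the connecting morphism. You should replace your fixed-$n$ injectivity claim by this colimit statement and rework all three assertions around ``enlarge $n$ until the morphism descends''; with that correction, the rest of your outline would align with the paper's argument.
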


Before giving the proof of the proposition, we need an intermediary result. We have 
$$(e_{n}\times \id)^\ast \pr_2^\ast = \pr_2^\ast,$$
so the inverse image functor of $(e_{n}\times \id)$ induces an endomorphism 
\begin{equation*}
\begin{aligned}\rho_n :  \Hom_{D^b_{\str}(A\times X, \bk)}(\pr_2^\ast(\mathscr{F}), \pr_2^\ast(\mathscr{G})) \longrightarrow  \Hom_{D^b_{\str}(A\times X, \bk)}(\pr_2^\ast(\mathscr{F}), \pr_2^\ast(\mathscr{G})).
\end{aligned}
\end{equation*}
(Here, with a slight abuse of notation, we denote again by $\str$ the stratification of $A \times X$ whose strata are the $A \times S$ for $S\in \str$.)
We clearly have $\rho_n \circ \rho_m = \rho_{nm}$. Moreover, the following diagram commutes: 

$$\xymatrix{
\Hom_{D^b_{\str}( X, \bk)}(\mathscr{F}, \mathscr{G}) \ar[d]_{\pr_2^\ast} \ar[rd]^{\pr_2^\ast} & \\
\Hom_{D^b_{\str}(A\times X, \bk)}(\pr_2^\ast(\mathscr{F}), \pr_2^\ast(\mathscr{G})) \ar[r]_{\rho_n} & \Hom_{D^b_{\str}(A\times X, \bk)}(\pr_2^\ast(\mathscr{F}), \pr_2^\ast(\mathscr{G})). 
}$$
We define an inductive system, parametrized by the (filtrant) set $\Z_{>0}$ endowed with the divisibility relation. For any $n > 0$, we set
 $$V_n = \Hom_{D^b_{\str}(A\times X, \bk)}(\pr_2^\ast(\mathscr{F}), \pr_2^\ast(\mathscr{G})).$$ 
If the integer $n$ divides $m$ the morphism $V_n \rightarrow V_m$ is given by $\rho_{\frac{m}{n}}$. 
The above diagram states that $\pr_2^\ast$ defines a morphism from the object $\Hom(\mathscr{F}, \mathscr{G})$ to the limit $\indlim_n V_n$. 

\begin{lem} \label{lemme isomorphisme de Ind sur les Hom}

We have an isomorphism of vector spaces induced by $\pr_2^\ast$
\begin{equation} \label{equation isomorphisme ind-objet}
    \Hom_{D^b_{\str}(X, \bk)}(\mathscr{F}, \mathscr{G}) \overset{\sim}{\longrightarrow} \varinjlim_n \Hom_{D^b_{\str}(A\times X, \bk)}(\pr_2^\ast(\mathscr{F}), \pr_2^\ast(\mathscr{G})).
\end{equation}
\end{lem}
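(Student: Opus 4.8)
The plan is to show that $\pr_2^\ast$ induces an injection and a surjection onto the colimit. For injectivity, suppose $f : \mathscr{F} \to \mathscr{G}$ is a morphism in $D^b_{\str}(X,\bk)$ such that $\pr_2^\ast(f)$ becomes zero in $\indlim_n V_n$. By definition of a filtered colimit of vector spaces, this means there is some $n > 0$ with $\rho_n(\pr_2^\ast(f)) = 0$ in $V_n$, i.e. $(e_n \times \id)^\ast \pr_2^\ast(f) = 0$. But $(e_n \times \id)^\ast \pr_2^\ast(f) = \pr_2^\ast(f)$ by the identity $(e_n \times \id)^\ast \pr_2^\ast = \pr_2^\ast$ recorded just before the lemma, so $\pr_2^\ast(f) = 0$ already. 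Since $\pr_2 : A \times X \to X$ admits a section (e.g.~$x \mapsto (1,x)$), the functor $\pr_2^\ast$ is faithful — applying the section's pullback recovers $f$ — hence $f = 0$. This gives injectivity; in fact, it shows each transition map $\rho_n$ is the identity, so the colimit is just a "constant" system and the real content is surjectivity, which must exploit constructibility.

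For surjectivity, start with an arbitrary class in $\indlim_n V_n$, represented by some $\psi \in V_n = \Hom_{D^b_{\str}(A\times X,\bk)}(\pr_2^\ast\mathscr{F}, \pr_2^\ast\mathscr{G})$. I would like to produce a morphism $\mathscr{F} \to \mathscr{G}$ in $D^b_{\str}(X,\bk)$ whose $\pr_2^\ast$ agrees with $\psi$ after pulling back along some $e_m \times \id$. The key point is that $\Hom$ in the bounded constructible derived category can be computed via hypercohomology of the internal Hom complex: $\Hom_{D^b_{\str}(A\times X,\bk)}(\pr_2^\ast\mathscr{F}, \pr_2^\ast\mathscr{G}) = \h^0(A \times X, \sHom^\bullet(\pr_2^\ast\mathscr{F}, \pr_2^\ast\mathscr{G}))$, and there is a base-change/projection-formula identification $\sHom^\bullet(\pr_2^\ast\mathscr{F}, \pr_2^\ast\mathscr{G}) \cong \pr_2^\ast \sHom^\bullet(\mathscr{F},\mathscr{G})$ (since $\pr_2$ is smooth and $\mathscr{F}$ is constructible, so $\pr_2^!$ and $\pr_2^\ast$ differ by a shift-and-twist that cancels in the Hom-sheaf). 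Writing $\mathscr{K} = \sHom^\bullet(\mathscr{F},\mathscr{G})$, which is a constructible complex on $X$, the Hom-group in question becomes $\h^0(A \times X, \pr_2^\ast \mathscr{K})$, and the map $\pr_2^\ast$ we are trying to invert becomes the pullback $\h^0(X, \mathscr{K}) \to \h^0(A \times X, \pr_2^\ast\mathscr{K})$ composed with the colimit over the $\rho_n$'s. So the claim is reduced to: for a constructible complex $\mathscr{K}$ on $X$ which is $\str$-constructible (so that $\pr_2^\ast \mathscr{K}$ is constructible for the product stratification), the natural map $\h^0(X,\mathscr{K}) \to \indlim_n \h^0(A\times X, (e_n\times\id)^\ast\pr_2^\ast\mathscr{K})$ is an isomorphism — and since $(e_n\times\id)^\ast\pr_2^\ast\mathscr{K} = \pr_2^\ast\mathscr{K}$, this is just $\h^0(X,\mathscr{K}) \xrightarrow{\sim} \h^0(A\times X,\pr_2^\ast\mathscr{K})$.

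This last statement is a Künneth computation. Since $A \cong (\C^\ast)^r$ and $\mathscr{K}$ is constructible, $\pr_{2\ast}\pr_2^\ast\mathscr{K} \cong \h^\bullet(A,\bk) \otimes_\bk \mathscr{K}$ by the projection formula, and $\h^\bullet((\C^\ast)^r,\bk)$ is concentrated in non-negative degrees with $\h^0 = \bk$. Therefore the adjunction unit $\mathscr{K} \to \pr_{2\ast}\pr_2^\ast\mathscr{K}$ splits off $\mathscr{K}$ as the bottom summand, and taking $\h^0(X,-)$ of $\mathscr{K} \hookrightarrow \pr_{2\ast}\pr_2^\ast\mathscr{K} = \bigoplus_{i\ge 0}\mathscr{K}[-i]\otimes\h^i(A,\bk)$ we see that $\h^0(X,\mathscr{K}) \to \h^0(X,\pr_{2\ast}\pr_2^\ast\mathscr{K}) = \h^0(A\times X,\pr_2^\ast\mathscr{K})$ is the inclusion of a direct summand onto which the other summands $\h^{-i}(X,\mathscr{K})\otimes\h^i(A,\bk)$, $i>0$, could a priori contribute — but those vanish because $\mathscr{K}$, as an internal Hom between objects of a \emph{bounded} derived category with the appropriate $t$-exactness, has $\h^j(X,\mathscr{K}) = \Hom^j(\mathscr{F},\mathscr{G})$ and we only need the degree-$0$ part to match; more carefully, one isolates the statement at the level of $\h^0$ using the Leray/Künneth spectral sequence $\h^p(X, \mathscr{K}\otimes \h^q(A,\bk)) \Rightarrow \h^{p+q}(A\times X, \pr_2^\ast\mathscr{K})$, whose $q=0$ row contributes exactly $\h^0(X,\mathscr{K})$ in total degree $0$ and whose $q>0$ contributions to total degree $0$ would need $p<0$, i.e.~negative-degree hypercohomology of a complex bounded below in the relevant range — so after replacing $\mathscr{F},\mathscr{G}$ by a suitable truncation argument these vanish. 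I expect the main obstacle to be precisely this bookkeeping: making the Künneth/spectral-sequence argument clean enough that the $q>0$ terms genuinely do not interfere in total degree $0$, which is where one must invoke that we are working with $\str$-constructible objects and use the boundedness of the derived category together with the concentration of $\h^\bullet((\C^\ast)^r,\bk)$ in degrees $\ge 0$. The appearance of the colimit in the statement is, somewhat deceptively, a red herring for this particular lemma — all transition maps are identities — but it is the natural formulation for the application in Proposition \ref{proposition Verdier 5.1}, where one pulls back along $e_n\times\id$ on $a(n)^\ast$, not on $\pr_2^\ast$.
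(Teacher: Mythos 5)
Your injectivity argument is fine, but the proof breaks at the step where you assert that ``each transition map $\rho_n$ is the identity, so the colimit is just a constant system.'' This confuses the identity of functors $(e_n\times\id)^\ast\pr_2^\ast = \pr_2^\ast$ (an equality on objects) with the induced action on $\Hom$-groups. The commutative triangle recorded before the lemma shows only that $\rho_n\circ\pr_2^\ast = \pr_2^\ast$, i.e.\ that $\rho_n$ is the identity \emph{on the image of $\pr_2^\ast$}; it says nothing about $\rho_n$ on the rest of $V_n = \Hom(\pr_2^\ast\mathscr{F},\pr_2^\ast\mathscr{G})$. By the K\"unneth isomorphism (which you yourself set up correctly),
$$V_n \cong \bigoplus_{i\geq 0}\mathbf{H}^i(A,\bk)\otimes_\bk\Hom_{D^b_\str(X,\bk)}(\mathscr{F},\mathscr{G}[-i]),$$
and $\rho_n$ acts on the $i$-th summand by $e_n^\ast\otimes\id$; for $i>0$ this is multiplication by a positive power of $n$, which is definitely not the identity.

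This error propagates into the surjectivity argument, which reduces to the claim that $\mathbf{H}^0(X,\mathscr{K})\to\mathbf{H}^0(A\times X,\pr_2^\ast\mathscr{K})$ is an isomorphism. That claim is false in general: the K\"unneth summands with $q>0$ contribute exactly $\mathbf{H}^{-q}(X,\mathscr{K})\otimes\mathbf{H}^q(A,\bk)$, i.e.\ $\Hom(\mathscr{F},\mathscr{G}[-q])\otimes\mathbf{H}^q(A,\bk)$, and there is no vanishing of $\Hom(\mathscr{F},\mathscr{G}[-q])$ for $q>0$ when $\mathscr{F},\mathscr{G}$ are arbitrary objects of $D^b_\str$. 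For instance with $\mathscr{G}=\mathscr{F}[1]$ one has $\Hom(\mathscr{F},\mathscr{G}[-1])=\End(\mathscr{F})\neq 0$. Your proposed ``truncation argument'' cannot make these terms vanish; they genuinely survive. The missing idea is the one the statement's colimit is designed to encode: one must compute $\indlim_n\mathbf{H}^i(A,\bk)$ along the transition maps $\rho_n(\bk)$, and observe (as the paper does, reducing to $A=\G_m$) that these maps act as multiplication by $n$ on $\mathbf{H}^1(\G_m,\bk)$, hence become zero once $\ell\mid n$. It is precisely here that the hypothesis $\car(\bk)=\ell>0$ enters; the lemma is \emph{false} over $\Q$ or $\C$, whereas your argument never uses the characteristic and would, if correct, prove the false characteristic-zero statement. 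So the colimit is not a red herring; it is the crux of the lemma.
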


\begin{proof}
First, note that $\pr_2$ is a smooth morphism since $A$ is smooth. Thus we have $\pr_2^! \cong \pr_2^\ast[2\dim(A)]=\pr_2^\ast[2r]$.
Now, thanks to \cite[(2.6.4)]{KS}, we can write 
$$R\Hom(\pr_2^\ast(\mathscr{F}), \pr_2^\ast(\mathscr{G})) = R\Gamma(A \times X, R\sHom(\pr_2^\ast(\mathscr{F}), \pr_2^\ast(\mathscr{G})).$$
Using \cite[Proposition 3.1.13]{KS}, we get the following isomorphisms in the category $D^b_{\str}(A\times X, \bk)$: 
\begin{align*}
    \pr_2^\ast \left(R\sHom(\mathscr{F}, \mathscr{G}) \right) &\cong \pr_2^! \left(R\sHom(\mathscr{F}, \mathscr{G})\right)[-2r] \\
    & \cong R\sHom(\pr_2^\ast(\mathscr{F}), \pr_2^!(\mathscr{G}))[-2r] \\
    & \cong R\sHom\left(\pr_2^\ast(\mathscr{F}), \pr_2^\ast(\mathscr{G})[2r]\right)[-2r] \\
    & \cong R\sHom(\pr_2^\ast(\mathscr{F}), \pr_2^\ast(\mathscr{G})).
\end{align*}
Using K\"unneth's formula, we obtain a chain of isomorphisms, in the derived category $D^b\Vect_{\bk}$ of $\bk$-vector spaces: 
\begin{align*}
     R\Hom(\pr_2^\ast(\mathscr{F}), \pr_2^\ast(\mathscr{G})) &\cong R\Gamma(A \times X, R\sHom(\pr_2^\ast(\mathscr{F}), \pr_2^\ast(\mathscr{G})) \\
     &\cong R\Gamma(A \times X, \pr_2^\ast R\sHom(\mathscr{F}, \mathscr{G}))\\
     &\cong R\Gamma(A \times X, \bk \boxtimes R\sHom(\mathscr{F}, \mathscr{G}))\\
     &\cong R\Gamma(A, \bk) \otimes^L_{\bk} R\Hom(\mathscr{F}, \mathscr{G}).
\end{align*}
Since we work over a field $\bk$, we know that the tensor product functor over $\bk$ is an exact functor, so taking cohomology on both sides, we get an isomorphism of vector spaces
$$\Hom_{D^b_{\str}(A\times X, \bk)}(\pr_2^\ast(\mathscr{F}), \pr_2^\ast(\mathscr{G})) \cong \bigoplus_{i\in \Z}\mathbf{H}^i(A, \bk) \otimes_{\bk} \Hom_{D^b_{\str}(X, \bk)}(\mathscr{F}, \mathscr{G}[-i])$$
where $\mathbf{H}^i(A, \bk)$ is the $i$-th cohomology of $A$.
Under these isomorphisms, the transition morphism $\rho_n$ identifies with $\rho_n(\bk)\otimes \id$, where $\rho_n(\bk)$ denote the morphism $\mathbf{H}^\bullet(A, \bk)\rightarrow \mathbf{H}^\bullet(A, \bk)$ induced by $e_n^\ast$.


As inductive limits commute with tensor products, if we show that the inductive system $(V'_n : = \mathbf{H}^\bullet(A, \bk), \rho_{\frac{m}{n}}(\bk))$ satisfies
\begin{equation}\label{limite cohomologie du tore}\indlim_n \mathbf{H}^i(A, \bk) = \left\{
\begin{array}{l}
  \bk \mbox{  if } i=0 \\
 0\; \mbox{   otherwise}
\end{array}
\right.
\end{equation}
we can conclude.
Assume that $A$ is of rank 1, i.e. that $A \cong \mathbb{G}_m=\C^\ast$.
Recall that the cohomology of $\mathbb{G}_m$ is non-zero only in degree 0 and in degree 1.
Note that we have a morphism 
$$\bk \longrightarrow \indlim\mathbf{H}^\bullet(\mathbb{G}_m, \bk)$$
defined by sending the $\bk$ on the left onto the copy of $\bk$ in degree zero of $V'_1 =\mathbf{H}^\bullet(\mathbb{G}_m, \bk)$. 

We know that $\mathbf{H}^\bullet(\mathbb{G}_m, \bk) = \Ext^\bullet_{D^b(\G_m,\bk)}(\underline{\bk}_{\mathbb{G}_m}, \underline{\bk}_{\mathbb{G}_m})$. Using the (easy) fact that an extension between two local systems on $\G_m$ is itself a local system,  we have 
\begin{align*}
    \mathbf{H}^\bullet(\mathbb{G}_m, \bk) & = \mathbf{H}^0(\G_m, \bk) \oplus\mathbf{H}^1(\G_m, \bk) \\
    & \cong \Hom_{D^b(\G_m,\bk)}(\underline{\bk}_{\mathbb{G}_m}, \underline{\bk}_{\mathbb{G}_m})\oplus \Ext^1_{D^b(\G_m,\bk)}(\underline{\bk}_{\mathbb{G}_m}, \underline{\bk}_{\mathbb{G}_m}) \\
    &= \Hom_{\loc(\mathbb{G}_m, \bk)}(\underline{\bk}_{\mathbb{G}_m}, \underline{\bk}_{\mathbb{G}_m})\oplus \Ext_{\loc(\mathbb{G}_m, \bk)}^1(\underline{\bk}_{\mathbb{G}_m}, \underline{\bk}_{\mathbb{G}_m})\\
    & \cong \Hom_{\Rep(\mathbb{Z}, \bk)}(\bk_{\mathrm{triv}}, \bk_{\mathrm{triv}})\oplus \Ext_{\Rep(\mathbb{Z}, \bk)}^1(\bk_{\mathrm{triv}}, \bk_{\mathrm{triv}}).
\end{align*}
We can thus work in the category of $\bk$-representations of $\mathbb{Z}$.
We have to determine the action of $\rho_n(\bk)$ on each of these summands.

 It is obvious that $\bk_{\mathrm{triv},n} = \bk_{\mathrm{triv}}$ (see the proof of the lemma \ref{lemme tiré en arrière local devient constant} for the notation), so the action of $\rho_n(\bk)$ on $\Ext_{\Rep(\mathbb{Z}, \bk)}^0(\bk_{\mathrm{triv}}, \bk_{\mathrm{triv}})= \Hom_{\Rep(\mathbb{Z}, \bk)}(\bk_{\mathrm{triv}}, \bk_{\mathrm{triv}})$ is simply the identity.
Now, we know that $\Ext^1_{\Rep(\mathbb{Z}, \bk)}(\bk_{\mathrm{triv}}, \bk_{\mathrm{triv}})$ is one-dimensional; an isomorphism $\bk \longrightarrow \Ext^1_{\Rep(\mathbb{Z}, \bk)}(\bk_{\mathrm{triv}}, \bk_{\mathrm{triv}})$ is given by 
$$ x \longmapsto V(x)$$
where $V(x) \cong \bk \oplus \bk$ as a vector space and the action of $1 \in \mathbb{Z}$ is given by the matrix $\begin{pmatrix} 
1 & x \\
0 & 1
\end{pmatrix}$. The action of $1 \in \mathbb{Z}$ on $V(x)_n$ is then given by the matrix $\begin{pmatrix} 
1 & x \\
0 & 1
\end{pmatrix}^n = \begin{pmatrix} 
1 & nx \\
0 & 1
\end{pmatrix}$. The endomorphism of $\bk$ induced by $\rho_n(\bk)$ is thus multiplication by $n$. We have determined the action of $\rho_n(\bk)$ on each of the summands of $\mathbf{H}^\bullet(\G_m, \bk)$.
As limit, we obtain 
$\bk \oplus\varinjlim \bk.$
In the second summand, the transition map $V'_n \supseteq \bk \rightarrow \bk \subseteq V'_m$ is the multiplication by $\frac{m}{n}$ (for $m$ divisible by $n$).
 Since $\bk$ is the algebraic closure of a finite field of characteristic $\ell$, multiplication by $n$ is zero as soon as $\ell$ divides $n$ ; this limit is then zero and \eqref{limite cohomologie du tore} holds.

We now deal with the general case: we have a commutative diagram 
\begin{equation}\label{diagram lemme trivialisation de en}\vcenter{\xymatrix{
A \ar[rr]^{e_n}\ar[d]^{\wr} && A \ar[d]^{\wr}\\
(\C^\ast)^r \ar[rr]_{(e_n \times \cdots \times e_n)} && (\C^\ast)^r
}}\end{equation}
(we made a little abuse of notation, denoting by the same symbol ``$e_n$" the maps $A \rightarrow A$ and $\C^\ast \rightarrow \C^\ast$ sending an element on its $n$-th power).
Using K\"unneth's formula one more time, we get 
\begin{equation}\label{formule de kunneth pour T}\mathbf{H}^m(A, \bk) \cong \bigoplus_{p_1 + \cdots + p_r = m}\left(\bigotimes_{i=1}^r\mathbf{H}^{p_i}(\C^\ast, \bk)\right).
\end{equation}
Thanks to the diagram \eqref{diagram lemme trivialisation de en}, the induced map $\rho_n(\bk) : \mathbf{H}^n(A, \bk) \rightarrow \mathbf{H}^n(A, \bk)$ decomposes in a direct sum of tensor product of maps, each of which corresponding to the map already studied in the case $A = \G_m$. Thanks to \eqref{formule de kunneth pour T}, the cohomology of $A$ is in degrees $0, \ldots, r$. From the case $A = \G_m$, we deduce immediately that the induced map $\rho_n(\bk)$ is the identity on $\mathbf{H}^0(A, \bk) = \bk$.
Now for $\mathbf{H}^m(A, \bk)$ with $m\geq 1$, there is in each summand at least one $p_i = 1$. On this summand, the map induced by $\rho_n(\bk)$ is zero if $n$ is divisible by $\ell$. Passing to the inductive limit, we deduce that 
\eqref{limite cohomologie du tore} holds again in this case.
We can conclude as above and the proof is complete.
\end{proof}
\bigbreak 
\subsection{Proof of Verdier's proposition}
\label{section proof of verdier}

\begin{proof}[Proof of proposition \ref{proposition Verdier 5.1}] 
Let $\mathscr{F}\in D_{\str}^b(X, \bk)$. We prove the existence of an isomorphism between $a(n)^\ast(\fff)$ and an object of the form $\pr^\ast_2(\fqq)$ with $\fqq \in D^b_{\str}(X, \bk)$ (for some $n \in \Z_{>0}$) by induction on the minimal length of an interval containing $\{i \in \mathbb{Z} \mid \mathscr{H}^i(\mathscr{F})\neq 0\}$. 

Assume that $\mathscr{F}$ has a unique non-zero cohomology object. We can use lemma \ref{tiré en arrière d'un faisceaux devient constant sur les fibres} and lemma \ref{tiré en arrière devient un pr2}. We get an integer $n$ such that $a(n)^\ast(\fff)$ is constant on the fibers of $\pr_2$.
Since $a(n)^\ast$ is an exact functor, we obtain an isomorphism $a(n)^\ast(\fff) \cong \pr^\ast_2(\fgg)$ for a certain sheaf $\fgg$ on $X$. Restricting to $\{1\}\times X$, we get $\fff \cong \fgg$. This settles this case.
Now for a general $\mathscr{F}$, denote by $N$ the largest integer such that $\mathscr{H}^N(\mathscr{F})\neq 0$. We have a truncation triangle 
$$\mathscr{H}^N(\mathscr{F})[-N-1] \overset{f}{\longrightarrow}\tau_{\leq N-1}\mathscr{F} \longrightarrow \mathscr{F} \overset{[1]}{\longrightarrow} .$$
Denote $\mathscr{G}:=\tau_{\leq N-1}\mathscr{F}$ and $\mathscr{K}:= \mathscr{H}^N(\mathscr{F})[-N-1]$. Both $\mathscr{G}$ and $\mathscr{K}$ have a smaller amplitude than $\mathscr{F}$ so by induction, there exists an $n$ such that $a(n)^\ast(\mathscr{G})$ and $a(n)^\ast(\mathscr{K})$ are of the form $\pr_2^\ast(\mathscr{V})$ and $\pr_2^\ast(\mathscr{U})$.

The morphism $a(n)^\ast(f)$ defines a morphism $\pr_2^\ast(\mathscr{U})\rightarrow\pr_2^\ast(\mathscr{V})$, and thus an element in $\indlim \Hom(\pr_2^\ast(\mathscr{U}), \pr_2^\ast(\mathscr{V}))$ (say in the copy of $\Hom(\pr_2^\ast(\mathscr{U}), \pr_2^\ast(\mathscr{V}))$ indexed by 1 in the inductive limit). With the isomorphism \eqref{equation isomorphisme ind-objet}, replacing $n$ by a multiple if necessary, the morphism $a(n)^\ast(f)$ is of the form $\pr_2^\ast(g)$ for some $g : \mathscr{U} \rightarrow \mathscr{V}$; hence $a(n)^\ast(\mathscr{F})$ identifies with the cone of $\pr_2^\ast(g)$ and is of the form $\pr_2^\ast(\mathscr{Q})$.

So far, we have obtained an isomorphism 
$$ \pr_2^\ast(\mathscr{Q})\overset{\sim}{\longrightarrow} a(n)^\ast(\mathscr{F})$$
for a certain integer $n$ and an object $\mathscr{Q}\in D^b_{\str}(X, \bk)$. To identify $\mathscr{Q}$, we can restrict to the subset $\{1\} \times X$ to see that $\mathscr{F}\cong \mathscr{Q}$. We have thus obtained an isomorphism 
$$\iota(n) : \pr_2^\ast(\mathscr{F}) \overset{\sim}{\longrightarrow} a(n)^\ast(\mathscr{F}) .$$
It is also clear that $\iota(n)$ restricts to the identity on $\{1\}\times X$.

We now prove the second statement. We know that there exists an isomorphism $\iota(n) : \pr_2^\ast(\fff) \rightarrow a(n)^\ast(\fff)$ whose restriction to $\{1\}\times X$ is the identity of $\fff$. Consider a morphism $\iota : \pr_2^\ast(\fff) \rightarrow a(n)^\ast(\fff) $ such that $\iota_{\mid \{1\}\times X} = \id_{\fff}$ (to begin with, for the same $n$).
For $m$ divisible by $n$, the composition 
$$ (e_{\frac{m}{n}}\times \id)^\ast\iota \circ (e_{\frac{m}{n}}\times \id)^\ast\iota(n)^{-1}$$
 is an endomorphism of $\pr_2^\ast(\mathscr{F})$. It then defines an element in 
 $$\indlim \Hom(\pr_2^\ast(\mathscr{F}), \pr_2^\ast(\mathscr{F})).$$
 (Once again, we choose our element in the first copy of $\Hom(\pr_2^\ast(\mathscr{F}), \pr_2^\ast(\mathscr{F}))$ appearing in the limit.)
 With the isomorphism \eqref{equation isomorphisme ind-objet}, and replacing $m$ by a multiple if necessary, we can assume that this element is of the form $\pr_2^\ast(f)$ for $f$ an endomorphism of $\mathscr{F}$. Restricting again to $\{1\}\times X$, we obtain $f = \id_{\mathscr{F}}$. For $m$ sufficiently large, we deduce that $(e_{\frac{m}{n}}\times \id)^\ast(\iota\circ \iota(n)^{-1}) =\id_{\pr_2^\ast\fff}$; then 
 \begin{equation}\label{iota est un isomorphisme}(e_{\frac{m}{n}}\times \id)^\ast\iota = (e_{\frac{m}{n}}\times \id)^\ast\iota(n).\end{equation}
 To check that a morphism between complexes of sheaves is an isomorphism in the derived category, it suffices to do so on the stalks of the cohomology objects (or the cohomology objects of the stalks). With the equality \eqref{iota est un isomorphisme} and noticing that $e_{\frac{m}{n}}$ is a surjective map, we get that $\iota$ is an isomorphism on the stalks and hence an isomorphism.

Now, if $n_1$ and $n_2$ are as in the proposition and if $n_3$ is a common multiple of $n_1$ and $n_2$, we note that (with the notations of the proposition) $(e_{\frac{n_3}{n_1}}\times \id)^\ast\iota(n_1)$ and $(e_{\frac{n_3}{n_2}}\times \id)^\ast\iota(n_2)$  are morphisms $ \pr_2^\ast(\mathscr{F})\rightarrow a(n_3)^\ast(\mathscr{F})$ whose restriction to $\{1\}\times X$ is again the identity of $\mathscr{F}$, so we may as well assume that $n_1=n_2$ and hence we can use the above reasoning to conclude.

Finally, consider a second complex $\mathscr{G}$ and a morphism $ u : \mathscr{F}\rightarrow \mathscr{G}$. Choose $n$ large enough to have two isomorphisms 
$$\iota_1(n) : \pr_2^\ast(\mathscr{F}) \longrightarrow a(n)^\ast(\mathscr{F}),\qquad \iota_2(n) : \pr_2^\ast(\mathscr{G}) \longrightarrow a(n)^\ast(\mathscr{G}). $$
The composition 
$\iota_2(n)^{-1}\circ a(n)^\ast(u) \circ \iota_1(n)$ is a morphism $\pr_2^\ast(\mathscr{F}) \rightarrow \pr_2^\ast(\mathscr{G})$. Using again the isomorphism \eqref{equation isomorphisme ind-objet} and replacing $n$ by a multiple if necessary, we can assume that this morphism is of the form $\pr_2^\ast(v)$ for $v : \mathscr{F}\rightarrow \mathscr{G}$. Restricting to $\{1\}\times X$, we obtain $v = u$ which finishes the proof.
\end{proof}

%
\subsection{Definition}\label{sectiopn def }
%
We define here the monodromy of a constructible complex. 
For any cocharacter $\lambda\in \Cocar(A)$ and any integer $n$, we set $\lambda(e^{\frac{2i\pi}{n}}) = \lambda_n$.
Let also
$$ j_{\sharp} : \{\sharp\}\times X \hookrightarrow A\times X\quad \mbox{and}\quad \tau_{\sharp} : X \overset{\sim}{\rightarrow} \{\sharp\}\times X$$
denote the inclusion and 
the canonical map respectively, for any $\sharp\in A$.

Consider an object $\fff \in D^b_{\str}(X, \bk)$ and a cocharacter $\lambda \in \Cocar(A)$.
Thanks to proposition \ref{proposition Verdier 5.1}, there exists $n \in \Z_{>0}$ and an isomorphism 
$$\iota(n) : \pr_2^\ast(\fff) \rightarrow a(n)^{\ast}(\fff)$$
whose restriction to $\{1\}\times X$ identifies with the identity of $\fff$.
We set
$$\varphi_{\fff,n}^\lambda : =  \tau_{\lambda_n}^\ast j^\ast_{\lambda_n}\left(\iota(n)\right): \fff \rightarrow \fff.$$
Our first task is to show that in fact the isomorphism $\varphi_{\fff,n}^\lambda$ does not depend on the integer $n$.

\begin{prop} \label{proposition monodrmie bien définie}
Consider two integers $n_1,n_2$ such that there exist two morphisms   $$\iota_1 : \pr_2^\ast(\fff) \rightarrow a(n_1)^{\ast}(\fff) \quad \mbox{and} \quad \iota_2 : \pr_2^\ast(\fff) \rightarrow a(n_2)^{\ast}(\fff)$$
as in proposition \ref{proposition Verdier 5.1}. Then for any $\lambda \in \Cocar(A)$, we have 
$$ \varphi_{\fff,n_1}^\lambda =\varphi_{\fff,n_2}^\lambda.$$
\end{prop}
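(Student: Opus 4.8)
The plan is to reduce the comparison for $n_1$ and $n_2$ to the case $n_1 = n_2$, and then to use the last part of Proposition~\ref{proposition Verdier 5.1} to compare two morphisms with the same target. First I would pick a common multiple $n_3$ of $n_1$ and $n_2$, set $k_i = n_3/n_i$, and consider the pulled-back morphisms $(e_{k_i}\times\id)^\ast(\iota_i) : \pr_2^\ast(\fff)\to a(n_3)^\ast(\fff)$; note $(e_{k_i}\times\id)^\ast\pr_2^\ast = \pr_2^\ast$ and $(e_{k_i}\times\id)^\ast a(n_i)^\ast = a(n_3)^\ast$ since $a(n_i)\circ(e_{k_i}\times\id) = a\circ(e_{n_i}\times\id)\circ(e_{k_i}\times\id) = a\circ(e_{n_3}\times\id) = a(n_3)$. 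Each of these is again a morphism restricting to $\id_\fff$ on $\{1\}\times X$, because pulling back along $e_{k_i}\times\id$ and then restricting to $\{1\}\times X$ factors through restriction to $\{1\}\times X$ of the original (as $e_{k_i}(1)=1$). By the second assertion of Proposition~\ref{proposition Verdier 5.1} (applied with $n_1=n_2=n_3$), these two morphisms agree after a further pullback $(e_{m}\times\id)^\ast$ for some $m\in\Z_{>0}$, i.e. $(e_{m k_1}\times\id)^\ast(\iota_1) = (e_{m k_2}\times\id)^\ast(\iota_2)$ as morphisms $\pr_2^\ast(\fff)\to a(mn_3)^\ast(\fff)$.

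Next I would unwind the definition of $\varphi^\lambda_{\fff,n}$ and express it as a restriction along a point of $A\times X$ of a suitable pullback. The key computational point is the identity of maps $X\to A\times X$: for the cocharacter $\lambda$ and any integers $p,q$, the composite
\[
X \xrightarrow{\ \tau_{\lambda_{pq}}\ } \{\lambda_{pq}\}\times X \hookrightarrow A\times X \xrightarrow{\ e_p\times\id\ } A\times X
\]
equals $j_{\lambda_q}\circ\tau_{\lambda_q}$, because $(\lambda_{pq})^p = \lambda(e^{2i\pi p/(pq)}) = \lambda(e^{2i\pi/q}) = \lambda_q$. Hence
\[
\tau_{\lambda_{pq}}^\ast\, j_{\lambda_{pq}}^\ast\bigl((e_p\times\id)^\ast(\kappa)\bigr) \;=\; \tau_{\lambda_q}^\ast\, j_{\lambda_q}^\ast(\kappa)
\]
for any morphism $\kappa$ in $D^b_{\str}(A\times X,\bk)$. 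Applying this with $\kappa = \iota_i$, $p = mk_i$, $q = n_i$ (so $pq = mn_3$), I get
\[
\varphi^\lambda_{\fff,n_i} \;=\; \tau_{\lambda_{n_i}}^\ast\, j_{\lambda_{n_i}}^\ast(\iota_i) \;=\; \tau_{\lambda_{mn_3}}^\ast\, j_{\lambda_{mn_3}}^\ast\bigl((e_{mk_i}\times\id)^\ast(\iota_i)\bigr).
\]

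Finally, combining the two displays above: since $(e_{mk_1}\times\id)^\ast(\iota_1) = (e_{mk_2}\times\id)^\ast(\iota_2)$, the right-hand sides for $i=1$ and $i=2$ coincide, so $\varphi^\lambda_{\fff,n_1} = \varphi^\lambda_{\fff,n_2}$, as desired. I expect the only real subtlety to be the bookkeeping in the map identity $(e_p\times\id)\circ j_{\lambda_{pq}}\circ\tau_{\lambda_{pq}} = j_{\lambda_q}\circ\tau_{\lambda_q}$ and in checking that each pulled-back $\iota_i$ still restricts to $\id_\fff$ on $\{1\}\times X$; once these are in place the rest is a direct substitution using the second part of Proposition~\ref{proposition Verdier 5.1}. (Strictly speaking one also needs that the chosen $m$ can be taken the same for $i=1$ and $i=2$, which is automatic after enlarging to a common value.)
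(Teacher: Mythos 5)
Your argument is correct and takes essentially the same route as the paper: invoke the uniqueness-after-pullback clause of Proposition~\ref{proposition Verdier 5.1} to align the two morphisms over a common multiple, then use the map identity $(e_p\times\id)\circ j_{\lambda_{pq}}\circ\tau_{\lambda_{pq}} = j_{\lambda_q}\circ\tau_{\lambda_q}$ to see that the restriction defining $\varphi^\lambda_{\fff,n}$ is unchanged by pulling back along $e_p\times\id$. The only small difference is that you first pull both morphisms back to a common target $a(n_3)^\ast(\fff)$ and then re-apply the clause with $n_1=n_2=n_3$, whereas the paper applies it once directly to the pair $(n_1,n_2)$; your extra reduction is harmless but unnecessary, since the second assertion of Proposition~\ref{proposition Verdier 5.1} already covers distinct $n_1,n_2$.
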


\begin{proof} We want to show that 
$$ \tau_{\lambda_{n_1}}^\ast j_{\lambda_{n_1}}^\ast(\iota_1) = \tau_{\lambda_{n_2}}^\ast j_{\lambda_{n_2}}^\ast(\iota_2).$$
We know that there exists a multiple $m$ of both $n_1$ and $n_2$ such that $(e_{\frac{m}{n_1}}\times \id)^\ast(\iota) = (e_{\frac{m}{n_2}}\times \id)^\ast(\iota')$. Thus 
$$j_{\lambda_m}^\ast(e_{\frac{m}{n_1}}\times \id)^\ast(\iota_1) = j_{\lambda_m}^\ast(e_{\frac{m}{n_2}}\times \id)^\ast(\iota_2).$$

It is then easy to deduce that $\tau_{\lambda_{n_1}}^\ast j_{\lambda_{n_1}}^\ast(\iota_1)$ and $\tau_{\lambda_{n_2}}^\ast j_{\lambda_{n_2}}^\ast(\iota_2)$ are already equal in $ \Hom(\fff, \fff)$.
\end{proof}
Thanks to proposition \ref{proposition monodrmie bien définie}, we now can get rid of the $n$ in the notation $\varphi_{\fff,n}^\lambda$.
For any $\fff \in D^b_{\str}(X, \bk)$ have obtained a map 
$$\varphi_{\fff} : \Cocar(A) \rightarrow \Aut_{D^b_{\str}(X, \bk)}(\fff), \quad \lambda \mapsto \varphi_{\fff}^\lambda.$$

We state now a useful lemma.
\begin{lem} \label{monodromie est un morphisme de groupe}
Fix an object $\fff \in D^b_{\str}(X, \bk)$. The map $\varphi_{\fff}$ is a group morphism $ \Cocar(A) \rightarrow \Aut_{D^b_{\str}(X, \bk)}(\fff)$.
\end{lem}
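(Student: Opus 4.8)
The plan is to prove the two identities $\varphi_\fff^0=\id_\fff$ (for the trivial cocharacter $0$) and $\varphi_\fff^{\lambda+\mu}=\varphi_\fff^\lambda\circ\varphi_\fff^\mu$ for all $\lambda,\mu\in\Cocar(A)$; together they say exactly that $\varphi_\fff$ is a group morphism (and force $\varphi_\fff^{-\lambda}=(\varphi_\fff^\lambda)^{-1}$). The first is immediate: for $\lambda=0$ one has $0_n=0(e^{2i\pi/n})=1$, so $\varphi_{\fff,n}^0=\tau_1^\ast j_1^\ast(\iota(n))$ is the restriction of $\iota(n)$ over $\{1\}\times X$, which is $\id_\fff$ by the normalization in Proposition~\ref{proposition Verdier 5.1}.

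For the additivity I would use a ``cocycle'' trick on the rank-$2r$ torus $A\times A$, acting on $X$ via $((s,t),x)\mapsto st\cdot x$ (the multiplication $A\times A\to A$ followed by the given action). Each stratum, being $A$-stable, is $A\times A$-stable, so $(X,\str)$ is a stratified $(A\times A)$-variety and Proposition~\ref{proposition Verdier 5.1} applies to it. Write $q\colon(A\times A)\times X\to X$ for the projection and $\widehat a(n)\colon((s,t),x)\mapsto s^nt^nx$ for the morphism ``$a(n)$'' of the $A\times A$-action. Fix $n>0$ and $\iota(n)\colon\pr_2^\ast\fff\xrightarrow{\sim}a(n)^\ast\fff$ over $A\times X$, normalized as usual, and introduce $h,r,g\colon(A\times A)\times X\to A\times X$, $h((s,t),x)=(st,x)$, $r((s,t),x)=(t,x)$, $g((s,t),x)=(s,t^nx)$. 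The elementary identities $\pr_2\circ h=q=\pr_2\circ r$, $\pr_2\circ g=a(n)\circ r$ and $a(n)\circ h=\widehat a(n)=a(n)\circ g$ show that $h^\ast\iota(n)$ and $g^\ast\iota(n)\circ r^\ast\iota(n)$ are both morphisms $q^\ast\fff\to\widehat a(n)^\ast\fff$, and both restrict to $\id_\fff$ over $\{(1,1)\}\times X$ because $h,r,g$ all send $((1,1),x)$ to $(1,x)$. By the uniqueness clause of Proposition~\ref{proposition Verdier 5.1} (applied to $A\times A$, with $n_1=n_2=n$) there is $k>0$ with
$$(e_k\times\id)^\ast\bigl(h^\ast\iota(n)\bigr)=(e_k\times\id)^\ast\bigl(g^\ast\iota(n)\circ r^\ast\iota(n)\bigr),$$
where $e_k\colon A\times A\to A\times A$ is $(s,t)\mapsto(s^k,t^k)$.

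Finally I would restrict this equality along the inclusion of the slice $\{(\lambda(\zeta),\mu(\zeta))\}\times X$, where $\zeta=e^{2i\pi/(kn)}$, and identify the slice with $X$. For any $\nu\in\Cocar(A)$ one has $\nu(\zeta)^k=\nu_n$ and $\nu(\zeta)^{kn}=1$. On the left, $h\circ(e_k\times\id)$ carries this slice over the point $\lambda(\zeta)^k\mu(\zeta)^k=\lambda_n\mu_n=(\lambda+\mu)_n$ (cocharacters being multiplied valuewise), so the restriction is $\varphi_\fff^{\lambda+\mu}$. On the right, $\mu(\zeta)^{kn}=1$ makes the second component of $g\circ(e_k\times\id)$ collapse to $x$, so the factor $g^\ast\iota(n)$ restricts $\iota(n)$ at $\lambda(\zeta)^k=\lambda_n$, i.e.\ to $\varphi_\fff^\lambda$, while $r^\ast\iota(n)$ restricts $\iota(n)$ at $\mu(\zeta)^k=\mu_n$, i.e.\ to $\varphi_\fff^\mu$; the intermediate object $(a(n)\circ r)^\ast\fff$ also restricts to $\fff$ there, so the composition makes sense after restriction. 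Hence the right-hand side restricts to $\varphi_\fff^\lambda\circ\varphi_\fff^\mu$, and $\varphi_\fff^{\lambda+\mu}=\varphi_\fff^\lambda\circ\varphi_\fff^\mu$ follows.

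The only real labour is the bookkeeping in that last step: one must check that at each stage the source and target of the restricted maps are canonically $\fff$ — which is precisely why the exponents are chosen so that every group element occurring has the relevant power equal to $1$ — and that restriction is compatible with the composition in the displayed identity, using that restricting $f^\ast\iota(n)$ at a point $p$ amounts to restricting $\iota(n)$ at $f(p)$. I do not expect any genuine obstacle beyond keeping track of these identifications; all the substance is already contained in Proposition~\ref{proposition Verdier 5.1}.
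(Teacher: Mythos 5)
Your proof is correct, and it takes a genuinely different route from the paper's. The paper stays on $A\times X$: it introduces the automorphism $f\colon(t,x)\mapsto(\mu_n t,x)$, which fixes both $\pr_2$ and $a(n)$, so that $f^\ast(\iota)$ is again a morphism $\pr_2^\ast\fff\to a(n)^\ast\fff$; it then checks that $f^\ast(\iota)\circ\pr_2^\ast\bigl((\varphi^\mu_\fff)^{-1}\bigr)$ restricts to $\id_\fff$ over $\{1\}\times X$, so by Proposition~\ref{proposition monodrmie bien définie} its restriction at $\lambda_n$ computes $\varphi^\lambda_\fff$, and a direct computation of that restriction gives $\varphi^{\lambda\mu}_\fff\circ(\varphi^\mu_\fff)^{-1}$. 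Your argument instead passes to the rank-$2r$ torus $A\times A$ acting on $X$ through multiplication, notices that $h^\ast\iota(n)$ and $g^\ast\iota(n)\circ r^\ast\iota(n)$ are two normalized isomorphisms $q^\ast\fff\to\widehat a(n)^\ast\fff$ for that action, applies the uniqueness clause of Proposition~\ref{proposition Verdier 5.1} over $A\times A$ to make them agree after a further pullback, and then restricts to a one-parameter slice chosen so every occurring group element has the needed power equal to $1$. I checked the bookkeeping: the identities $\pr_2\circ h=q=\pr_2\circ r$, $\pr_2\circ g=a(n)\circ r$, $a(n)\circ h=\widehat a(n)=a(n)\circ g$ are correct, the choice $\zeta=e^{2i\pi/(kn)}$ makes $\nu(\zeta)^k=\nu_n$ and $\nu(\zeta)^{kn}=1$, and the slice restrictions land where you claim. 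Both proofs lean on Proposition~\ref{proposition Verdier 5.1}, but yours gives the additivity $\varphi^{\lambda+\mu}=\varphi^\lambda\circ\varphi^\mu$ directly and avoids inverting $\varphi^\mu_\fff$; it also makes the ``cocycle'' structure explicit in a way that parallels the equivariant cocycle condition appearing later in the paper (proof of Lemma~\ref{lemme A agit sur Id}). The paper's version is slightly shorter because it never leaves $A\times X$. One presentational caveat on your side: you should be explicit that the composition $g^\ast\iota(n)\circ r^\ast\iota(n)$ makes sense because $r^\ast a(n)^\ast\fff=(a(n)\circ r)^\ast\fff=(\pr_2\circ g)^\ast\fff=g^\ast\pr_2^\ast\fff$; you do note this implicitly, but it is the hinge of the whole construction and deserves to be displayed.
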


\begin{proof}
Consider $\lambda, \mu\in \Cocar(A)$. Consider an integer $n$ such that there exists a morphism 
$$\iota : \pr_2^\ast \fff \rightarrow a(n)^\ast\fff$$
whose restriction to $\{1\}\times X$ identifies with $\id_{\fff}$.

Let $f : A\times X \rightarrow A \times X$ denote the map 
$(t,x) \longmapsto (\mu_n t,x).$
As we have $a(n)\circ f = a(n)$ and $\pr_2 \circ f = \pr_2$, the functor $f^\ast$ induces a map 
$$\Hom_{D^b_{\str}(A\times X)}(\pr_2^\ast\fff ,a(n)^\ast \fff ) \rightarrow \Hom_{D^b_{\str}(A\times X)}(\pr_2^\ast\fff,a(n)^\ast \fff ).$$
We then consider the composition 
\begin{equation} \label{morphism monodromie groupe}
\xymatrix{\pr_2^\ast\fff \ar[rr]^{\pr_2^\ast((\varphi^\mu_{\fff})^{-1})} && \pr_2^\ast\fff \ar[rr]^{f^\ast(\iota)} &&  a(n)^\ast \fff.
}\end{equation}
One can check that this morphism restricts to $\id_{\fff}$ on $\{1\}\times X$. 
Thanks to proposition \ref{proposition monodrmie bien définie}, we can define $\varphi_{\fff}^\lambda$ by restricting \eqref{morphism monodromie groupe} to $\{\lambda_n\}\times X$ (and then applying $\tau_{\lambda_n}^\ast$).
Doing so, we obtain $\tau_{\lambda_n}^\ast j_{\lambda_n}^\ast(f^\ast(\iota)) \circ (\varphi^\mu_{\fff})^{-1}$.
To conclude the proof, we must show that $\tau_{\lambda_n}^\ast j_{\lambda_n}^\ast(f^\ast(\iota)) = \varphi_{\fff}^{\lambda\mu}$. But this is clear from the following commutative diagram: 
$$\xymatrix{
X \ar[rr]_{\sim\qquad}^{\tau_{\lambda_n}\qquad}\ar@{=}[d] && \{\lambda_n\}\times X \ar[d]_{f_{\mid \{\lambda_n\}\times X}} \ar@{^{(}->}[rr]^{\qquad j_{\lambda_n}} && A \times X \ar[d]_{f} \\
X \ar[rr]_{\sim\qquad}^{\tau_{\lambda_n \mu_n}\qquad} &&\{\lambda_n\mu_n\}\times X \ar@{^{(}->}[rr]^{\qquad j_{\lambda_n\mu_n}} && A \times X.
}
$$
The proof is now complete.
\end{proof}
Thanks to lemma \ref{monodromie est un morphisme de groupe}, for any $\fff$, we can extend the morphism $\varphi_{\fff} $ to a $\bk$-algebra morphism:
$$\varphi_{\fff} : \bk[\Cocar(A)] \longrightarrow \End_{D^b_{\str}(X, \bk)}(\fff).$$
\begin{defn} \label{definition monodromy}
The morphism of algebras $\varphi_{\fff} : \bk[\Cocar(A)] \longrightarrow \End_{D^b_{\str}(X, \bk)}(\fff)$ is called the canonical monodromy morphism of $\fff$. The action of $\Cocar(A)$ on $\fff$ defined by $\varphi_{\fff}$ is the canonical monodromy action on $\fff$.  
\end{defn}

The following result is a key feature of the monodromy morphism:

\begin{lem} \label{lemme monodromie commute}
Consider any $\lambda \in \Cocar(A)$. For any $\fff, \fgg$ in $D^b_{\str}(X, \bk)$ and any morphism $f : \fff \rightarrow \fgg$, we have $f \circ \varphi_{\fff}^\lambda = \varphi_{\fgg}^\lambda \circ f$.
\end{lem}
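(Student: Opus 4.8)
The plan is to deduce this naturality property directly from the last assertion of Proposition~\ref{proposition Verdier 5.1}. Given the morphism $f : \fff \to \fgg$, that assertion provides an integer $n \in \Z_{>0}$ and two isomorphisms $\iota_1(n) : \pr_2^\ast(\fff) \to a(n)^\ast(\fff)$ and $\iota_2(n) : \pr_2^\ast(\fgg) \to a(n)^\ast(\fgg)$, each restricting to the identity on $\{1\}\times X$, for which the square \eqref{diagramme verdier proposition 5.1} commutes, i.e.\ $a(n)^\ast(f)\circ\iota_1(n) = \iota_2(n)\circ\pr_2^\ast(f)$. By Proposition~\ref{proposition monodrmie bien définie}, I am free to compute the monodromy using this very $n$, so that $\varphi_\fff^\lambda = \tau_{\lambda_n}^\ast j_{\lambda_n}^\ast(\iota_1(n))$ and $\varphi_\fgg^\lambda = \tau_{\lambda_n}^\ast j_{\lambda_n}^\ast(\iota_2(n))$.

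Next I would apply the functor $\tau_{\lambda_n}^\ast j_{\lambda_n}^\ast$ to the commutative square above. The point is that $\pr_2 \circ j_{\lambda_n} \circ \tau_{\lambda_n} = \id_X$, and, since $\lambda$ is a cocharacter, $\lambda_n^{\,n} = \lambda(e^{2\pi i}) = 1$, whence also $a(n) \circ j_{\lambda_n} \circ \tau_{\lambda_n} = \id_X$. Consequently there are canonical isomorphisms of functors $\tau_{\lambda_n}^\ast j_{\lambda_n}^\ast \pr_2^\ast \cong \id$ and $\tau_{\lambda_n}^\ast j_{\lambda_n}^\ast a(n)^\ast \cong \id$. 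Under these identifications, the two vertical maps $\pr_2^\ast(f)$ and $a(n)^\ast(f)$ of the square both become $f$, while the two horizontal maps become $\varphi_\fff^\lambda$ and $\varphi_\fgg^\lambda$ by the previous paragraph; the commutativity of the image square then reads $f \circ \varphi_\fff^\lambda = \varphi_\fgg^\lambda \circ f$, which is the claim.

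The only delicate point --- and the step I expect to require the most care, though it is entirely routine --- is checking that the four canonical identifications above are mutually compatible, so that ``applying $\tau_{\lambda_n}^\ast j_{\lambda_n}^\ast$ to the square'' really yields the square written in the previous paragraph and not one twisted by some auto-equivalence. This amounts to a diagram chase using only functoriality of pullback along the composites of $j_{\sharp}$, $\tau_{\sharp}$, $\pr_2$ and $a(n)$, together with the identity relations among the resulting maps $X \to X$; no new idea is involved, and it is the exact analogue of the verification already carried out at the end of the proof of Lemma~\ref{monodromie est un morphisme de groupe}.
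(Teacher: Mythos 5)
Your proposal is correct and is essentially the same argument the paper uses: the paper's proof is precisely to apply the restriction-to-$\{\lambda_n\}\times X$ functor to the commutative square \eqref{diagramme verdier proposition 5.1} from the last part of Proposition~\ref{proposition Verdier 5.1}, invoking the well-definedness of $\varphi^\lambda$ implicitly via Proposition~\ref{proposition monodrmie bien définie}. You have merely spelled out the routine compatibility of the canonical identifications $\tau_{\lambda_n}^\ast j_{\lambda_n}^\ast \pr_2^\ast \cong \id$ and $\tau_{\lambda_n}^\ast j_{\lambda_n}^\ast a(n)^\ast \cong \id$, which the paper leaves implicit.
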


\begin{proof}
This is an immediate consequence of the commutativity of diagram \eqref{diagramme verdier proposition 5.1} of proposition \ref{proposition Verdier 5.1}: with the notations therein, the lemma follows from applying the functor of restriction to $\{\lambda(e^{\frac{2i\pi}{n}})\}\times X$ to the diagram. 
\end{proof}

\begin{lem} \label{lemme pullback monodromy}
Consider a locally closed union of strata $Z \subseteq X$ and let $j : Z \hookrightarrow X$ be the inclusion map. We denote by $\str_Z$ the induced stratification on $Z$. Then for any $\fff \in D^b_{\str_Z}(Z, \bk)$ and any $\fgg\in D^b_{\str}(X, \bk)$, we have: \begin{enumerate}
    \item $\varphi^{\lambda}_{j_!(\fff)}= j_!(\varphi^{\lambda}_{\fff})$, 
    \item $\varphi^{\lambda}_{j_{\ast}(\fff)}= j_\ast(\varphi^{\lambda}_{\fff})$, 
    \item $\varphi^{\lambda}_{j^{\ast}(\fgg)}= j^\ast(\varphi^{\lambda}_{\fgg})$
    for any $\lambda \in \Cocar(A)$.
\end{enumerate} 

\end{lem}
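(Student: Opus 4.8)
The plan is to trace through the construction of the monodromy isomorphism and observe that each of the three functors $j_!$, $j_\ast$, $j^\ast$ commutes with everything in sight: with $\pr_2^\ast$, with $a(n)^\ast$, and with the restriction functors $j_{\lambda_n}^\ast$ and $\tau_{\lambda_n}^\ast$ used to cut out the fiber over $\lambda_n \in A$. Concretely, I would first set up the base-change squares. For (3), writing $\widetilde{j} = \id_A \times j : A \times Z \hookrightarrow A \times X$, the squares relating $\widetilde{j}$ to $j$ via $\pr_2$ and via the action morphism $a(n)$ (using that $Z$ is $A$-stable, so $a(n)$ restricts to $A \times Z \to Z$) are Cartesian, giving canonical isomorphisms $\widetilde{j}^\ast \pr_2^\ast \fgg \cong \pr_2^\ast j^\ast \fgg$ and $\widetilde{j}^\ast a(n)^\ast \fgg \cong a(n)^\ast j^\ast \fgg$. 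Applying $\widetilde{j}^\ast$ to a Verdier isomorphism $\iota(n) : \pr_2^\ast \fgg \to a(n)^\ast \fgg$ for $\fgg$ then produces a morphism $\pr_2^\ast j^\ast \fgg \to a(n)^\ast j^\ast \fgg$ whose restriction to $\{1\} \times Z$ is $\id_{j^\ast \fgg}$; by Proposition \ref{proposition Verdier 5.1} and Proposition \ref{proposition monodrmie bien définie} this is \emph{the} datum computing $\varphi^\lambda_{j^\ast \fgg}$. Now restrict to the fiber over $\lambda_n$: the relevant squares identify $j^\ast_{\lambda_n} \widetilde{j}^\ast$ with $\widetilde{j^\ast}{}^\ast j^\ast_{\lambda_n}$ (where $\widetilde{j^\ast}$ is the obvious inclusion of fibers), and chasing through $\tau^\ast_{\lambda_n}$ yields exactly $j^\ast(\varphi^\lambda_{\fgg})$. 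This gives (3).

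For (1) and (2) I would argue similarly but now using proper base change and its adjoint counterpart. For $j_!$: the square with $\widetilde{j}_!$ and $j_!$ over $\pr_2$ gives $\pr_2^\ast j_!(\fff) \cong \widetilde{j}_!\,\pr_2^\ast(\fff)$ by proper base change (since $\pr_2$ is a pullback of a projection and $j$ is the inclusion of a locally closed subset, the standard base-change isomorphism applies), and likewise the square over $a(n)$, using $A$-stability of $Z$, gives $a(n)^\ast j_!(\fff) \cong \widetilde{j}_!\,a(n)^\ast(\fff)$. Hence a Verdier isomorphism $\iota(n)$ for $\fff$ yields, after applying $\widetilde{j}_!$, a morphism $\pr_2^\ast j_!(\fff) \to a(n)^\ast j_!(\fff)$; one checks its restriction to $\{1\}\times X$ is $j_!(\id_\fff) = \id_{j_!\fff}$, so it is the canonical datum for $j_!(\fff)$. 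Restricting to the fiber over $\lambda_n$ and using that $j_!$ commutes with restriction to $\{\lambda_n\}\times X$ (again proper base change, in the form $i^\ast_{\lambda_n} \widetilde{j}_! \cong j_!\, i^\ast_{\lambda_n}$) then yields $\varphi^\lambda_{j_!(\fff)} = j_!(\varphi^\lambda_\fff)$. The case of $j_\ast$ is formally dual: one uses the base-change isomorphisms for $j_\ast$ against the \emph{smooth} morphisms $\pr_2$ and $a(n)$ (these are smooth base-change isomorphisms, valid since $e_n \times \id$ and $a$, hence $a(n)$, and $\pr_2$ are smooth), together with the fact that restriction to the fiber $\{\lambda_n\}\times X$, being a pullback along a smooth map of the inclusion of a point, commutes with $j_\ast$.

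The only genuine subtlety — and the step I expect to require the most care — is bookkeeping the compatibility of all these base-change isomorphisms with the normalization condition "restriction to $\{1\}\times X$ is the identity" and, more delicately, with the \emph{choice} of $n$: the Verdier datum $\iota(n)$ is only unique up to passing to a common multiple of $n$ (Proposition \ref{proposition Verdier 5.1}), so one must check that applying $j_!$, $j_\ast$ or $j^\ast$ to a datum valid at level $n$ produces a datum valid at the same level $n$, and that the resulting monodromy is independent of this choice — but this is guaranteed the moment one knows the image is \emph{a} legitimate Verdier datum, by Proposition \ref{proposition monodrmie bien définie}. A clean way to package everything is to note that for $j^\ast$ the statement is really just Lemma \ref{lemme monodromie commute} in disguise — apply it to the adjunction morphisms $j_! j^\ast \to \id$ or $\id \to j_\ast j^\ast$ — but I would still spell out the base-change argument once for $j^\ast$ and then indicate that $j_!$ and $j_\ast$ are obtained by the dual manipulations, since this is the form in which the statement will be used downstream.
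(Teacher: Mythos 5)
Your overall strategy --- trace the monodromy through the construction and commute $j_!$, $j_*$, $j^*$ with $\pr_2^\ast$, $a(n)^\ast$ and the restriction to $\{\lambda_n\}\times X$ via base change --- is exactly what the paper's one-line proof gestures at, and parts (1) and (3) are argued soundly: for (3) you only need functoriality of $\ast$-pullback (no base change at all), and for (1) the base change for $\widetilde{j}_!$ against any pullback, including $i^\ast_{\lambda_n}$, holds unconditionally.

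The gap is in (2). You justify the commutation $i^\ast_{\lambda_n}\widetilde{j}_\ast \cong j_\ast\, i^\ast_{\lambda_n, Z}$ by saying that the restriction to the fiber is ``a pullback along a smooth map of the inclusion of a point,'' but that is not a base-change hypothesis: the inclusion $i_{\lambda_n}\colon \{\lambda_n\}\times X\hookrightarrow A\times X$ is not smooth, so smooth base change does not apply, and $\ast$-pushforward along an open (hence general locally closed) immersion does not commute with arbitrary base change. Two ways to repair this. (a) Observe that $\{\lambda_n\}\times X$ is transverse to the stratification $\{A\times S\}$, so $i^!_{\lambda_n}\cong i^\ast_{\lambda_n}[-2r]$ on $D^b_{\str}(A\times X,\bk)$ (and likewise on $A\times Z$); combine this with the unconditional base change $i^!_{\lambda_n}\widetilde{j}_\ast \cong j_\ast\, i^!_{\lambda_n, Z}$ to get the desired identification. (b) Alternatively, deduce (1) and (2) from (3) and Lemma~\ref{lemme monodromie commute}: since $j$ is a locally closed immersion, the unit/counit give canonical isomorphisms $j^\ast j_!\cong \id$ and $j^\ast j_\ast\cong \id$, under which $\varphi^\lambda_{j^\ast j_\ast\fff}=\varphi^\lambda_{\fff}$ by naturality; by (3), $j^\ast\varphi^\lambda_{j_\ast\fff}=\varphi^\lambda_{j^\ast j_\ast\fff}$, while also $j^\ast j_\ast(\varphi^\lambda_{\fff})=\varphi^\lambda_{\fff}$ under the same identification; since $j^\ast\colon \End(j_\ast\fff)\to \End(\fff)$ is an isomorphism (inverse to $j_\ast$, which is an isomorphism on $\Hom$ by full faithfulness), conclude $\varphi^\lambda_{j_\ast\fff}=j_\ast(\varphi^\lambda_{\fff})$, and identically for $j_!$. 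Option (b) is cleaner and is in the spirit of the ``disguised Lemma~\ref{lemme monodromie commute}'' remark at the end of your proposal, but note that the direction is (3) $\Rightarrow$ (1),(2), not the other way round.
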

The proof follows easily from the definition of the monodromy morphism and applications of the (smooth) base change theorem.

%
\subsection{Monodromic categories}  \label{subsection : Setting and Notations}
%

Denote by $\gamma$ the map $[0,1] \rightarrow \C^\ast$ which maps $t$ to $e^{2i\pi t}$.
It is a classical fact that the map $\lambda \mapsto \lambda \circ \gamma$ gives a group isomorphism $\Cocar(A) \rightarrow \pi_1(A)$. Using theorem \ref{theoreme representation systèmes locaux}, we obtain an equivalence $\loc(A, \bk) \cong \bk[\Cocar(A)]\modu$, where the latter category is the category of finite dimensional $\bk[\Cocar(A)]$-modules (or equivalently, finite dimensional $\bk$-representation of the group $\Cocar(A)$).
For any $\lambda \in \Cocar(A)$, we denote the corresponding basis element of $\bk[\Cocar(A)]$ by $e^\lambda$. 
We consider the dual $\bk$-torus $A^\vee_{\bk}$ of $A$. By definition, this is the algebraic torus over $\bk$ whose character lattice is the cocharacter lattice of $A$ i.e. $\Cocar(A)= \Car(A^{\vee}_{\bk})$. It is a standard fact that the algebra of regular functions on a torus is isomorphic to the group algebra of the characters of the torus. We thus get
$$\bk[\Cocar(A)] = \bk[\Car(A^{\vee}_{\bk})] \cong \mathscr{O}(A^{\vee}_{\bk}).$$
The irreducible $\bk$-representations of $\Cocar(A)$ are one-dimensional since this group is abelian. These representations correspond to group morphisms
$\Cocar(A)\rightarrow \bk^\ast$
which in turn correspond to $\bk$-algebra morphisms $\bk[\Cocar(A)]  \rightarrow \bk$. 
Using the above isomorphisms, such morphisms are the same as algebra morphisms 
$\mathscr{O}(A^{\vee}_{\bk}) \rightarrow \bk.$
It is well known that these algebra morphisms correspond to closed points of the affine variety $A^{\vee}_{\bk}$. Finally, to give an irreducible finite dimensional representation of $\Cocar(A)$ (equivalently an irreducible finite dimensional $\bk[\Cocar(A)]$-module) amounts to giving an element of the dual torus $A^{\vee}_{\bk}$: more precisely, we have a bijection  
$$A^\vee_{\bk} \longleftrightarrow \Irr\left(\bk[\Cocar(A)]\modu\right)$$
where $\Irr\left(\bk[\Cocar(A)]\modu\right)$ denotes the set of isomorphism classes of irreducible finite dimensional $\bk[\Cocar(A)]$-modules.

\begin{rk} \label{remarque tore dual d'ordre fini}
Considering a trivialization $A^{\vee}_{\bk} \cong (\bk^\ast)^r$, an element of $A^{\vee}_{\bk}$ can be viewed as an $r$-tuple $(t_1,\ldots, t_r)$ of elements of $\bk^\ast$. Since $\bk$ is the inductive limit of its finite subfields, each of these $t_i$ can be viewed as an element in a finite field, so each of these is of finite order in $\bk^\ast$. We deduce easily that the element $t\in A^{\vee}_{\bk}$ is of finite order in the torus. Note also that, $\bk$ being algebraically closed, it is a perfect field of positive characteristic. Thus the morphism $x \mapsto x^\ell$ is a field automorphism and so the orders of $x$ and $x^\ell$ are the same. Thus $\ell$ does not divide the order of any $t \in A^\vee_{\bk}$. 
\end{rk}

We let $D^b_{\str}(X, \bk)_{[\underline{t}]}$ be the full subcategory of $D^b_{\str}(X, \bk)$ whose object are those $\fff$ such that the monodromy morphism $\varphi_{\fff} : \bk[\Cocar(A)]\rightarrow \Aut(\fff)$ factors through the quotient $\bk[\Cocar(A)]/\langle e^\lambda - \lambda(t) \mid \lambda \in \Cocar(A)\rangle$. Note that this is not a triangulated subcategory.

%
\section{The Lusztig--Yun monodromic category} \label{section Lusztig Yun}
%

 \subsection{Preliminaries} \label{subsection preliminaries LY}

 Consider an algebraic group $H$ and a $H$-variety $X$. We denote the $H$-equivariant constructible bounded derived category of sheaves of $\bk$-vector spaces on $X$ by $D^b_{H}(X, \bk)$. Assume that there exists a finite subgroup $K$ of $H$, contained in the center of $H$, which acts trivially on $X$. 
 
 Recall that the center $Z(\mathcal{C})$ of an additive category $\mathcal{C}$ is the endomorphism ring of the identity functor $\id_{\mathcal{C}}$. If $\mathcal{C}$ is $\bk$-linear, then $Z(\mathcal{C})$ is a commutative $\bk$-algebra.
 \begin{lem} \label{lemme A agit sur Id}
 The finite abelian group $K$ acts functorially on the identity functor of $D^b_{H}(X, \bk)$. Said otherwise, we have a $\bk$-algebra morphism 
 $$\bk[K]\rightarrow Z(D^b_{H}(X, \bk)).$$

 \end{lem}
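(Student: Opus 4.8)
The plan is to read off the $K$-action on the identity functor directly from the equivariance datum carried by each object of $D^b_{H}(X,\bk)$.

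First I would recall the shape of an object of $D^b_{H}(X,\bk)$: it is an object $\fff$ of $D^b(X,\bk)$ equipped with an isomorphism $\theta_{\fff}\colon a^{\ast}\fff\xrightarrow{\sim}\pr^{\ast}\fff$ in $D^b(H\times X,\bk)$, where $a,\pr\colon H\times X\to X$ denote the action morphism and the projection, subject to the unit condition $i_{e}^{\ast}\theta_{\fff}=\id_{\fff}$ and the usual cocycle identity on $H\times H\times X$; a morphism in $D^b_{H}(X,\bk)$ is a morphism of underlying objects intertwining the $\theta$'s. (In the Bernstein--Lunts model via free resolutions, or via the simplicial bar construction, this equivariance datum is encoded in the gluing data of the resolution; the argument below is insensitive to the choice of model.) For $k\in K$ let $i_{k}\colon X\hookrightarrow H\times X$ be $x\mapsto(k,x)$. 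Because $K$ acts trivially on $X$ we have $a\circ i_{k}=\pr\circ i_{k}=\id_{X}$, so, identifying $i_{k}^{\ast}a^{\ast}\fff$ and $i_{k}^{\ast}\pr^{\ast}\fff$ with $\fff$ via the canonical isomorphisms, $\vartheta_{k}(\fff):=i_{k}^{\ast}\theta_{\fff}$ is an automorphism of $\fff$.

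Next I would verify the three properties needed to conclude. \emph{Naturality in $\fff$}: if $f\colon\fff\to\fgg$ is a morphism in $D^b_{H}(X,\bk)$ then $\pr^{\ast}(f)\circ\theta_{\fff}=\theta_{\fgg}\circ a^{\ast}(f)$, and applying $i_{k}^{\ast}$ gives $f\circ\vartheta_{k}(\fff)=\vartheta_{k}(\fgg)\circ f$; hence $\vartheta_{k}:=(\vartheta_{k}(\fff))_{\fff}$ lies in $Z(D^b_{H}(X,\bk))$. \emph{Unitality}: $\vartheta_{e}=\id$ by the unit condition. \emph{Multiplicativity}: restricting the cocycle identity along the map $\sigma\colon X\to H\times H\times X$, $x\mapsto(k,k',x)$, and using once more that $k,k'$ act trivially, the three maps $H\times H\times X\to H\times X$ appearing in the cocycle identity become, after composition with $\sigma$, the inclusions $i_{kk'}$, $i_{k}$ and $i_{k'}$, and one reads off $\vartheta_{kk'}(\fff)=\vartheta_{k'}(\fff)\circ\vartheta_{k}(\fff)$ (or the opposite order, according to the cocycle convention); as $K$ is abelian this exhibits $k\mapsto\vartheta_{k}$ as a group homomorphism from $K$ to the units of $Z(D^b_{H}(X,\bk))$. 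Extending $\bk$-linearly yields the algebra morphism $\bk[K]\to Z(D^b_{H}(X,\bk))$.

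The only delicate point is the bookkeeping in the first step: writing down, in whichever model of the equivariant derived category is in force, the equivariance isomorphism attached to an object and checking that it pulls back along $i_{k}$ and along $\sigma$ with the expected unit and cocycle relations. In the descent-datum presentation this is immediate; in the free-resolution presentation it is a standard but slightly tedious translation. Conceptually the cleanest formulation is that $D^b_{H}(X,\bk)$ is the constructible derived category of the quotient stack $[X/H]$, inside every automorphism group of whose points the trivially acting subgroup $K$ sits compatibly, thereby acting on $\id$; I would likely add this as a remark. Note that centrality of $K$ in $H$ plays no role in this argument --- only triviality of the $K$-action on $X$ is used.
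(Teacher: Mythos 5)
Your overall strategy --- pull back the equivariance isomorphism along the slice $i_k\colon X\hookrightarrow H\times X$, $x\mapsto(k,x)$, using that $a\circ i_k=\pr\circ i_k=\id_X$ --- is in the same spirit as the paper's, modulo the choice of model for $D^b_{H}(X,\bk)$ (descent datum versus Bernstein--Lunts resolutions). There is, however, a genuine gap, located exactly at the place where you assert that centrality of $K$ plays no role.

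What you never verify is that $\vartheta_k(\fff):=i_k^\ast\theta_\fff$ is an endomorphism of $\fff$ \emph{as an object of} $D^b_H(X,\bk)$, rather than merely of the underlying object in $D^b(X,\bk)$. Your naturality step shows that $\vartheta_k$ intertwines every equivariant morphism $f\colon\fff\to\fgg$, but a natural endotransformation of $\id_{D^b_H(X,\bk)}$ must in addition assign to each $\fff$ an automorphism \emph{in} $D^b_H(X,\bk)$; this is an a priori stronger requirement, and the naturality argument cannot supply it (one cannot take $f=\vartheta_k(\fff)$ without already knowing that it is equivariant, which is circular). In the descent model, equivariance of $\vartheta_k(\fff)$ means $\pr^\ast\vartheta_k(\fff)\circ\theta_\fff=\theta_\fff\circ a^\ast\vartheta_k(\fff)$. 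Restricting the cocycle identity along $(h,x)\mapsto(h,k,x)$ gives $(r_k\times\id)^\ast\theta_\fff=\pr^\ast\vartheta_k(\fff)\circ\theta_\fff$, while restricting along $(h,x)\mapsto(k,h,x)$ gives $(l_k\times\id)^\ast\theta_\fff=\theta_\fff\circ a^\ast\vartheta_k(\fff)$, with $r_k,l_k\colon H\to H$ right and left translation by $k$. Hence the needed identity is equivalent to $(r_k\times\id)^\ast\theta_\fff=(l_k\times\id)^\ast\theta_\fff$, which holds precisely when $r_k=l_k$, i.e.\ when $k\in Z(H)$. This is exactly where the paper's proof invokes centrality: in the Bernstein--Lunts language one shows that $\vartheta_\fff$ is an isomorphism in $D^b_H(H\times X,\bk)$ for the action $h\cdot(a,x)=(hah^{-1},h\cdot x)$, and the slice $\{k\}\times X\hookrightarrow H\times X$ is $H$-equivariant for this action if and only if $k$ is fixed by conjugation, i.e.\ central. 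Your closing remark should therefore be dropped and the centrality verification inserted. As a secondary caveat, the descent-datum description of $D^b_H(X,\bk)$ you adopt (a single cocycle isomorphism on $H\times X$) is not literally the Bernstein--Lunts category once $\dim H>0$; you do flag this, and the paper sidesteps it by working directly with resolutions, but it is another point your sketch would need to tighten.
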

 The lemma means that for any $\fff \in D^b_{H}(X, \bk)$ and any $a \in K$, we have an automorphism $\vartheta_{ \fff, a}$ of $\fff$, and moreover these automorphisms are compatible with the group structure of $K$.
 
 \begin{proof}
We recall one of the incarnations of the equivariant derived category as fibered category (see \cite[\S 2.4.3]{BL}). One can view an object $\fff \in D^b_{H}(X, \bk)$ as a collection of objects $\fff(P) \in D^b(P/H)$ for every resolution $P \rightarrow X$. Moreover, for any morphism $P \xrightarrow{\rho} Q\rightarrow X$ of resolutions, we have isomorphisms $\overline{\rho}^\ast \fff(Q) \cong \fff(P)$, satisfying some usual compatibility conditions (where $\overline{\rho}$ is the induced morphism on quotients).
To obtain such a collection from an object $\fff$ in $D^b_{H}(X, \bk)$, one proceeds in the following way: for any resolution $p : P \rightarrow X$, we have an object $p^\ast\fff$ in $D^b_{H}(P, \bk)$ since $p$ is $H$-equivariant; this in turn defines an object $\fff(P)\in D^b(P/H, \bk)$ since $P$ is a free $H$-space. The definition of the isomorphisms $\overline{\rho}^\ast \fff(Q) \cong \fff(P)$ is obvious. 

Now, consider two resolutions $p : P \rightarrow X$ and $q : Q \rightarrow X$ and a morphism of resolutions $\rho : P \rightarrow Q$. We assume that we have a group $L$ acting on $P$ and $Q$, commuting with the $H$-action, making $\rho$ a $L$-equivariant map and $p,q$ two $L$-invariant maps. Then the objects $q^\ast \fff$ and $p^\ast \fff$ defines objects in $D^b_{H \times L}(P, \bk)$.
Similarly, the isomorphism $\rho^\ast q^\ast \fff \cong p^\ast \fff$ lies in the category $D^b_{H \times L}(P, \bk)$. In order to get $\overline{\rho}^\ast \fff(Q) \cong \fff(P)$, we identify $D^b_{H}(P, \bk)$ with $D^b(P/H, \bk)$. Using the equivalence $D^b_{H\times L}(P, \bk) \cong D^b_{H\times L/ H \times \{1\}}(P/(H \times \{1\}), \bk)= D^b_{L}(P/H, \bk)$,
we thus see that this is actually an isomorphism in $D^b_{L}(P/H, \bk)$.

Consider $T = H \times X \xrightarrow{a} X$ and $T' = H \times H \times X \xrightarrow{a'}X$ with $a' (h,k,x) = hk\cdot x$.
We have the following action of $H\times H$ on $T'$ and $T$, respectively given by: 
$$(h_1, h_2) \cdot (a, b, x) = (h_1a h_2^{-1}, h_2 bh_2^{-1}, h_2 \cdot x), \qquad (h_1, h_2) \cdot (a,  x)  = (h_1 ah_2^{-1}, h_2\cdot x).$$
The maps $a$ and $a'$ define two $H \cong H \times \{1\}$-resolutions of $X$. 
We also have two $H$-resolution morphisms $T' \xrightarrow{\varphi_1, \varphi_2}T$ given by $\varphi_1(h,k,x)= (hk, x)$ and $\varphi_2(h,k,x)= (h, k\cdot x)$. The induced maps on quotients are given respectively by $\pr_2$ and $a : H \times X\rightarrow X$. Thus we obtain isomorphisms
$$\pr_2^\ast\fff(T) \cong \fff(T') \cong a^\ast\fff(T).$$
By definition, this can be written as 
\begin{equation} \label{equat2}
     \vartheta_{\fff}  : \pr_2^\ast(\For(\fff)) \xrightarrow{\sim} a^\ast(\For(\fff)).
\end{equation}
It is clear by definition that this construction is functorial, and that the isomorphism $\vartheta_{\fff}$ satisfies the usual cocycle condition.

The maps $\varphi_i$ for $i=1,2$ are $H \times H$-equivariant for these actions, moreover, the maps $a$ and $a'$ are $\{1\}\times H$-invariant. We deduce from the discussion above that the isomorphisms $\overline{\varphi_i}^\ast\For(\fff) \xrightarrow{\sim} \For(\fff)$ define isomorphisms in $D^b_{H}(H \times X, \bk)$, where the latter category is defined with respect to the action of $H$ on $H \times X$ given by $h \cdot(a, x) = (hah^{-1}, h\cdot x)$.

If $k$ belongs to $K \subseteq Z(H)$, then the map $X \xrightarrow{\sim} \{k\}\times X \hookrightarrow H\times X$ is $H$-equivariant, thus the pullback $\vartheta_{\fff, k}$ of $\vartheta$ along this map gives an automorphism of $\fff$ in $D^b_{H}(X, \bk)$. The cocycle condition for $\vartheta_{\fff}$, ensures that the $\vartheta_{\fff, k}$'s respect the group structure of $K$, i.e.~that $\vartheta_{\fff, k} \circ \vartheta_{\fff, k'} = \vartheta_{\fff, kk'}$ for $k, k' \in K$.
This concludes the proof of the lemma.
 \end{proof}


\subsection{Equivariant categories} \label{subsection equivariant monodromic LY}


Consider a finite central isogeny $\Tilde{H}\xrightarrow{\nu}H$ with kernel $K$ and $\chi : K \rightarrow \bk^\ast$ any character of $K$. We can obviously look at $X$ as a $\Tilde{H}$-variety. We obtain in this way an equivariant bounded derived category $D^b_{\Tilde{H}}(X,\bk)$. Thanks to lemma \ref{lemme A agit sur Id}, the finite abelian group $K$ acts on the identity functor of this category. We can thus consider the full subcategory consisting of objects $\fff$ such that $\vartheta_{\fff, a} = \chi(a)\id_{\fff}$ for all $a \in K$ (this is indeed a subcategory thanks to the functoriality property of the $\vartheta$'s, cf. lemma \ref{lemme A agit sur Id}). We denote this subcategory $D^b_{\Tilde{H}, \chi}(X, \bk)$. 

We consider now a (fixed) multiplicative rank-one $\bk$-local system $\fll$ on $H$. This means that the pullback of $\fll$ along the multiplicative map $H\times H \rightarrow H$ is $\fll \boxtimes \fll$. Assume that there exists a finite central isogeny $\nu$ with kernel $K$ of cardinality prime to $\ell$ and a character $\chi$ of $K$ such that 
$$\fll = (\nu_\ast \fbk_{\Tilde{H}})_{\chi}.$$
Let us explain a bit what this equality means. The isogeny $\nu$ is a $K$-equivariant map (for $H$ endowed with the trivial $K$-action). We can then define an action of $K$ on the pushforward $\nu_{\ast}\fbk_{\widetilde{H}}$; then $(\nu_\ast \fbk_{\Tilde{H}})_{\chi_{\small{\fll}}}$ denotes the $\chi_{\small{\fll}}$-isotypic component.
 We will consider the category $D^b_{\Tilde{H}, \chi}(X, \bk)$. For now, the reader has probably noticed that the definition of $D^b_{\Tilde{H}, \chi}(X, \bk)$ requires some choices, for $\nu$ and $\chi$. 
 We will prove that under suitable assumption on the characteristic of $\bk$, this category in fact does not depend on the choice made.
 
To finish this section, let us remark the following facts. Lemma \ref{lemme A agit sur Id} states that the action of $K$ on the equivariant derived category gives an algebra morphism $\bk[K]\rightarrow Z(D^b_{\widetilde{H}}(X, \bk))$. 
Since the cardinality of $K$ is prime to $\ell$, we have a decomposition $\bk[K] = \bigoplus_{\chi}\bk$ indexed by the characters of $K$. Moreover, for each summand $\bk$, we have an idempotent $e_\chi$ and a decomposition $\id_{\bk[K]} = \sum_{\chi}e_{\chi}$.
The identity $I$ in the second term (i.e. the identity of $\id_{D^b_{H}(X, \bk)}$) is the image of $\id_{\bk[K]}$ so we obtain a decomposition $I = \sum_{\chi} E_{\chi}$ with each $E_{\chi}$ idempotent. Since $D^b_{H}(X, \bk)$ is Karoubian (see e.g. \cite[\S 1, Theorem]{LC}), any object $\fff$ decomposes as a direct sum $\fff = \bigoplus_{\chi} \fff_{\chi}$ and $K$ acts via the character $\chi$ on $\fff_{\chi}$. This direct sum splitting is obviously functorial. Thus $D^b_{\widetilde{H}, \chi}(X, \bk)_{\small{\fll}}$ is a direct summand subcategory in $D^b_{\widetilde{H}}(X, \bk)$.
\begin{rk}
The above comment tells us that if there exists a nonzero morphism between two indecomposable objects of $D^b_{\widetilde{H}}(X, \bk)$, then these two objects are in the same direct summand subcategory; in particular each summand inherits the structure of triangulated category from $D^b_{\widetilde{H}}(X, \bk)$.
This also implies the following important result: if we have a $t$-structure on $D^b_{\widetilde{H}}(X, \bk)$ and if $\fff$ belongs to the direct summand subcategory associated to a character $\chi$ of $K$, then the cohomology objects of $\fff$ belong to the same summand.
\end{rk}

\subsection{Well-definiteness: first step} \label{section general consideration}

We start by making some general comments. Consider a connected algebraic group $H$ acting on a variety $X$ and a
 multiplicative rank-one local system $\fll$ on $H$. Assume that we have two finite central isogenies $\widetilde{H_i} \xrightarrow{\nu_i} H$ with kernels $K_i$ satisfying $\gcd(| K_i|, \ell) = 1$ for $i = 1,2$ and characters $\chi_1, \chi_2$ of $K_1$ and $K_2$ respectively such that $((\nu_i)_\ast(\fbk_{\widetilde{H_i}}))_{\chi_i} = \fll$.
 We can then define the $\chi_1$ and $\chi_2$ equivariant categories 
 $$D_1 : = D^b_{\widetilde{H_1}, \chi_1}(X, \bk) \subseteq D^b_{\widetilde{H_1}}(X, \bk) \quad \mbox{and}\quad D_2 : = D^b_{\widetilde{H_2}, \chi_2}(X, \bk)\subseteq D^b_{\widetilde{H_2}}(X, \bk).$$
 The question is: are these categories (canonically) equivalent ? We begin by reducing to a somehow simpler case: the canonical map $\nu_1\circ \pr_1 : \widetilde{H_1}\times_H \widetilde{H_2} \rightarrow H$ is again a finite central isogeny (here the fibered product has the group structure inherited from the one of the direct product $\widetilde{H_1}\times \widetilde{H_2}$), and its kernel is $\ker(\nu_1) \times \ker(\nu_2)$ so its cardinality is prime to $\ell$. If we can show that the ``character-equivariant" category defined with respect to $(\nu_1\circ \pr_1, \chi_1 \circ {\pr_1}_{\mid \pr_1^{-1}(\ker(\nu_1))})$ is canonically equivalent to $D_1$ then we will get $D_1 \cong D_2$ (canonically). Indeed, by definition we have $\nu_1\circ \pr_1 = \nu_2 \circ \pr_2$; one can check that $\chi_1 \circ {\pr_1}_{\mid \pr_1^{-1}(\ker(\nu_1))}$ and $\chi_2 \circ {\pr_2}_{\mid \pr_2^{-1}(\ker(\nu_2))}$ coincide because they define the same isotypic component $\fll$ in $(\nu_1\circ \pr_1)_\ast\fbk$.
 Therefore we can reduce to the following situation: 
 $$\xymatrix{
 \widetilde{H_1} \ar[r]_{\nu} 
 \ar@/^1pc/[rr]^{\nu_1} 
 & \widetilde{H_2} \ar[r]_{\nu_2} 
 & H 
 }
 $$
 with $\nu$ and $\nu_2$ two finite central isogenies and $\chi_1 = \chi_2 \circ \nu_{\mid \ker(\nu_1)}$. (Note that $\nu_1$ is then itself a finite central isogeny, and the kernels of $\nu_1$, $\nu_2$ and $\nu$ have cardinality prime to $\ell$.)
 The identity of $X$ is a $\nu$-map in the sense of \cite[\S 0.1]{BL}. We can thus consider the equivariant pullback and pushforward functors between the $\widetilde{H_1}$ and $\widetilde{H_2}$ equivariant derived categories on $X$ (these functors are denoted ${Q^\ast_{\id}}$ and ${Q_{\id}}_\ast$ in \cite[Part 6]{BL}). Set $(p_\nu)^\ast:={Q^\ast_{\id}}$. What we finally want to show is that $(p_\nu)^\ast$ induces an equivalence of categories $D_2 \rightarrow D_1$. For the proof, we will need an assumption on the characteristic $\ell$ of $\bk$. The proof goes in two steps: first full faithfulness, then essential surjectivity; but let us note that we will need an assumption on $\ell$.

\subsection{Equivariant cohomology for algebraic groups}

In the following lemma, we consider the notion of torsion primes for a reductive group; we refer to \cite[I.4.3, I.4.4]{SS} for the definition. Let us just recall that for a reductive group $L$, the torsion primes are the torsion primes of the simply-connected cover of its derived subgroup $\mathscr{D}(L)$, together with the primes dividing the order of the fundamental group of $\mathscr{D}(L)$. 

 Any connected algebraic group $H$ over $\C$ can be written as a semidirect product $H = R_u(H) \rtimes L$ with $R_u(H)$ the unipotent radical of $H$ and $L$ a connected reductive subgroup (see e.g. \cite[\S VIII.1, Theorem 4.3]{Ho}). We call $L$ a Levi factor of $H$; we have an isomorphism $L \cong H/ R_u(H)$.

 \begin{lem} \label{cohomology equiv = cohomology reduc}
  Consider $H$ a connected complex algebraic group, and fix a Levi decomposition $ H = R_u(H)\rtimes L$. We fix a maximal torus $T$ of $L$ and we let $W$ denote the Weyl group of $(L, T)$. Finally assume that the characteristic $\ell$ of $\bk$ is not a torsion prime for $L$.
  Then the natural morphism $\mathbf{H}^{\bullet}_{H}(\pt, \bk) \rightarrow \mathbf{H}^{\bullet}_{L}(\pt, \bk)$ is an isomorphism, and the natural morphism 
  $$\mathbf{H}^{\bullet}_{L}(\pt, \bk) \rightarrow\mathbf{H}^{\bullet}_{T}(\pt, \bk)$$ induces an isomorphism $\mathbf{H}^{\bullet}_{L}(\pt, \bk) \rightarrow\mathbf{H}^{\bullet}_{T}(\pt, \bk)^W$.
 \end{lem}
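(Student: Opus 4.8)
The plan is to treat the two isomorphisms separately; only the second uses the hypothesis that $\ell$ is not a torsion prime for $L$. For the first one, I would show that $L\hookrightarrow H$ induces a weak homotopy equivalence on classifying spaces. Realizing $BL$ and $BH$ through a contractible free $H$-space $EH$, the map $EH/L\to EH/H=BH$ is a fibre bundle with fibre $H/L$. Since $H=R_u(H)\rtimes L$, the quotient $H/L$ is isomorphic as a variety to $R_u(H)$, which, being a connected unipotent group over $\C$, is isomorphic to an affine space, hence contractible. A fibre bundle with contractible fibre is a weak homotopy equivalence, so $BL\to BH$ induces an isomorphism on cohomology with arbitrary coefficients; in particular $\mathbf{H}^\bullet_H(\pt,\bk)\xrightarrow{\sim}\mathbf{H}^\bullet_L(\pt,\bk)$. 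No hypothesis on $\ell$ is needed for this step.

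For the second isomorphism I would first record the standard facts. Since $BT$ is a product of copies of $\C P^\infty$, Künneth gives $\mathbf{H}^\bullet_T(\pt,\bk)\cong\Sym_\bk(V)$ with $V:=\Car(T)\otimes_\Z\bk$ placed in cohomological degree $2$; write $R$ for this graded ring. The Weyl group $W=N_L(T)/T$ acts on $T$, hence on $R$. For $n\in N_L(T)$, conjugation by $n$ is an inner automorphism of $L$, so it induces a self-map of $BL$ homotopic to the identity; restricting along $T$ yields a homotopy-commutative square showing that the pullback $\mathbf{H}^\bullet_L(\pt,\bk)\to R$ is unchanged after composing with the $W$-action on $R$. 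Hence this map factors through $R^W$, and the remaining task is to prove that $\mathbf{H}^\bullet_L(\pt,\bk)\to R^W$ is an isomorphism.

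The main argument would use the fibre bundle $L/T\to BT\xrightarrow{\pi}BL$, whose fibre is homotopy equivalent to the flag variety $L/B$. The integral cohomology of $L/B$ is free, concentrated in even degrees, of total rank $|W|$ (Schubert cells), so the same holds over $\bk$. Because $\ell$ is not a torsion prime for $L$, the Borel presentation survives reduction modulo $\ell$: the fibre restriction $R\to\mathbf{H}^\bullet(L/T,\bk)$ is surjective and identifies the target with $R\otimes_{R^W}\bk$. By Leray--Hirsch, $R$ is then a free graded $\mathbf{H}^\bullet_L(\pt,\bk)$-module of rank $|W|$ with a homogeneous basis lifting one of $\mathbf{H}^\bullet(L/T,\bk)$; in particular $\pi^\ast$ is injective, so $\mathbf{H}^\bullet_L(\pt,\bk)$ is a graded subring of $R^W$. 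On the other hand, a theorem of Demazure on integral Weyl-group invariants, again valid because $\ell$ is not a torsion prime, says that $R^W$ is a polynomial ring and $R$ is free over $R^W$ of rank $|W|$ with generators in the same degrees as in characteristic $0$. Over $\Q$, Borel's theorem gives $\mathbf{H}^\bullet_L(\pt,\Q)=R_\Q^W$, so the degrees of the Leray--Hirsch basis and those of a basis of $R$ over $R^W$ agree; comparing Hilbert series shows that $\mathbf{H}^\bullet_L(\pt,\bk)$ and $R^W$ have the same graded dimension, and since the former is contained in the latter they coincide.

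The first step is formal; all the content, and the only use of the hypothesis, is in the second isomorphism. The point is that being away from torsion primes is exactly what makes the Borel presentation of flag-variety cohomology, the polynomiality and freeness of $\Sym_\bk(V)$ over its $W$-invariants, and the vanishing of $\ell$-torsion in $\mathbf{H}^\bullet_L(\pt,\Z)$ all hold after reduction modulo $\ell$. I would rely on \cite{SS} and on Demazure's invariant theory for these inputs; the delicate part is assembling them and keeping careful track of degrees, so as to pin down $\mathbf{H}^\bullet_L(\pt,\bk)$ as $R^W$ exactly rather than merely as a subring of finite colength.
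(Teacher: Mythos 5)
Your treatment of the first isomorphism, $\mathbf{H}^{\bullet}_{H}(\pt, \bk) \rightarrow \mathbf{H}^{\bullet}_{L}(\pt, \bk)$, is essentially identical to the paper's: both exhibit $BL \to BH$ as a fibration with fibre $H/L \cong R_u(H)$, an affine space, and conclude by contractibility of the fibre. For the second isomorphism the two proofs diverge genuinely. The paper simply invokes Totaro (\cite[Theorem 1.3]{To}) and stops, whereas you reconstruct the argument: factoring through $R^W$ via inner automorphisms, running Leray--Hirsch on $L/T \to BT \to BL$ (using that $\mathbf{H}^\bullet(L/T,\Z)$ is free and even, and that mod-$\ell$ surjectivity of the Borel map holds away from torsion), then quoting Demazure's integral invariant theory (polynomiality of $R^W$ and freeness of $R$ over $R^W$ of rank $|W|$, with the expected degrees) to match Hilbert series and upgrade the inclusion $\mathbf{H}^\bullet_L(\pt,\bk)\subseteq R^W$ to an equality. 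This is sound and effectively amounts to a proof of the cited theorem of Totaro; it is more self-contained but also shifts the burden to Demazure and to establishing the mod-$\ell$ Borel presentation for $L/T$, which are themselves nontrivial inputs. The paper's choice to cite \cite{To} directly is shorter and cleaner for the purposes of this lemma, but your route is a legitimate alternative and makes explicit where the torsion-prime hypothesis actually enters (surjectivity of the fibre restriction and good behavior of $R^W$ mod $\ell$), which the paper leaves implicit.
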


 \begin{proof}[Proof sketch]
  The $H$-equivariant cohomology of the point can be computed as the total cohomology of the space $BH= EH/H$ where $EH$ is any contractible free $H$-space. Note that we have a split surjection 
$$\xymatrix{
H \ar@{->>}[r] &  L. \ar@{-->}@/^-1pc/[l]
}
$$
Thus the space $EH$ can be viewed as a (contractible and free) $L$-space and then chosen as $EL$, i.e. we have $BL= EH/L$. We then obtain a locally trivial map 
$$q  : BL \twoheadrightarrow BH$$ 
whose fibers are isomorphic to the unipotent radical of $H$, so in particular, isomorphic to an affine space. Thus we have $\mathbf{H}^\bullet_{H}(\pt, \bk) \cong \mathbf{H}^\bullet_{L}(\pt, \bk)$.  The last statement of the lemma is proved in \cite[Theorem 1.3]{To}; this is where the assumption on $\car(\bk)$ is necessary.
 \end{proof}

\subsection{Isogenies and equivariant cohomology}
We start by an easy general lemma:
 \begin{lem} \label{lemme ordre conoyau} Consider $\widetilde{H}, H$ two algebraic tori.
 Assume that $\widetilde{H} \xrightarrow{\nu} H$ is an isogeny with kernel $K$ of order $k$, and let $k = p^{a_1}_1\cdots p^{a_r}_r$ be its decomposition as a product of prime numbers.
Then the cokernel $A$ of the induced map 
 $$\nu^{\#} : \Car(H)\rightarrow \Car(\widetilde{H})$$ is finite and its order is of the form $p^{b_1}_1\cdots p^{b_r}_r$ for some non-negative integers $b_i$.
 \end{lem}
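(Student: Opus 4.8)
The plan is to pass to cocharacter lattices, where an isogeny of tori becomes an injection of finitely generated free abelian groups with finite cokernel, and then run the standard argument via the Smith normal form. First I would recall that for an algebraic torus $H$ over $\C$, the character lattice $\Car(H)$ and cocharacter lattice $\Cocar(H)$ are finitely generated free abelian groups of rank $\dim H$, dual to one another, and that an isogeny $\nu : \widetilde{H}\to H$ induces an injection $\nu^{\#} : \Car(H)\hookrightarrow \Car(\widetilde{H})$ with finite cokernel (indeed $\widetilde H$ and $H$ have the same dimension, so after tensoring with $\Q$ the map $\nu^{\#}\otimes\Q$ is an isomorphism; injectivity of $\nu^{\#}$ itself follows from surjectivity of $\nu$). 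Dually, $\nu$ induces an injection $\nu_{\#} : \Cocar(\widetilde H)\hookrightarrow \Cocar(H)$, and the kernel $K$ of $\nu$ is canonically identified with the cokernel of $\nu_{\#}$, or equivalently with $\Hom(\coker(\nu^{\#}), \C^{\ast})$ — this is the standard computation identifying $K$ with $\Hom(\Cocar(H)/\nu_{\#}\Cocar(\widetilde H),\C^\ast)$ via the exact sequence $1\to K\to\widetilde H\to H\to 1$.

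Next I would invoke the structure theorem for finitely generated modules over the PID $\Z$ (Smith normal form): choosing suitable bases of $\Car(H)$ and $\Car(\widetilde H)$, the map $\nu^{\#}$ is represented by a diagonal matrix $\mathrm{diag}(d_1,\dots,d_n)$ with $d_i\in\Z_{>0}$ and $d_1\mid d_2\mid\cdots\mid d_n$, so that $\coker(\nu^{\#})\cong \bigoplus_{i=1}^n \Z/d_i\Z$ has order $d_1\cdots d_n$, and dually $\coker(\nu_{\#})\cong\bigoplus_{i=1}^n\Z/d_i\Z$ as well (the same elementary divisors appear for a matrix and its transpose). Hence $|K| = |\coker(\nu_{\#})| = d_1\cdots d_n = |\coker(\nu^{\#})| = |A|$. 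In particular $|A| = |K| = k$, which is even stronger than the stated conclusion: the order of $A$ is exactly $k = p_1^{a_1}\cdots p_r^{a_r}$, so a fortiori it has the form $p_1^{b_1}\cdots p_r^{b_r}$ with $0\le b_i$ (here in fact $b_i = a_i$).

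The only genuinely substantive point is the canonical identification $|K| = |\coker(\nu^{\#})|$; everything else is linear algebra over $\Z$. I expect the main obstacle to be bookkeeping the duality correctly: one must be careful that $K = \ker(\nu)$ corresponds to $\coker$ of the cocharacter map (not the character map), and then separately observe that a matrix and its transpose have the same Smith normal form, hence isomorphic cokernels, to transfer the count back to $\coker(\nu^{\#})$. Alternatively, one can avoid the transpose remark entirely by noting that applying the exact contravariant functor $\Hom(-,\C^\ast)$ to the exact sequence $0\to \Car(H)\xrightarrow{\nu^{\#}}\Car(\widetilde H)\to A\to 0$ directly yields $1\to \Hom(A,\C^\ast)\to \widetilde H\xrightarrow{\nu} H\to 1$ (using that $\Car(\widetilde H)$ and $\Car(H)$ are free, so $\mathrm{Ext}^1$ vanishes), whence $K = \Hom(A,\C^\ast)$ and $|K| = |A|$ since $A$ is finite. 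I would present this second route, as it is the cleanest and makes the equality $|A| = k$ transparent.
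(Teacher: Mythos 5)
Your proof is correct, and in fact establishes the sharper equality $|A| = |K| = k$, which immediately implies the stated conclusion. The paper itself offers no proof of this lemma (it is dismissed as ``easy'' and the text moves straight on), so there is nothing to compare against; but both routes you describe — Smith normal form plus the matrix/transpose observation, or applying $\Hom(-,\C^\ast)$ to the short exact sequence of character lattices — are standard and sound, and your preference for the second is reasonable.

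One small quibble on the second route: the justification you give for exactness (``$\Car(\widetilde H)$ and $\Car(H)$ are free, so $\Ext^1$ vanishes'') is aimed at the wrong term. Applying $\Hom(-,\C^\ast)$ to $0\to\Car(H)\to\Car(\widetilde H)\to A\to 0$ gives a long exact sequence whose potential obstruction to surjectivity of $\widetilde H\to H$ is $\Ext^1_{\Z}(A,\C^\ast)$, not an $\Ext$ group of the free lattices. The cleanest statement is simply that $\C^\ast$ is a divisible abelian group, hence injective as a $\Z$-module, so $\Hom_{\Z}(-,\C^\ast)$ is exact. (Alternatively, surjectivity of $\widetilde H\to H$ is part of the hypothesis that $\nu$ is an isogeny, so you need not derive it.) With that adjustment the argument is complete.
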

 
Consider now two complex algebraic groups $\widetilde{H}$ and $H$ and a finite central isogeny $\nu : \widetilde{H}\rightarrow H$; denote its kernel by $K$.
We can find a Levi decomposition $\widetilde{H}= R_u(\widetilde{H}) \rtimes \widetilde{L}$ (see the discussion before lemma \ref{cohomology equiv = cohomology reduc}).
\begin{lem} \label{lemme fini centre dans reductif}
 Any finite subgroup of the center $Z(\widetilde{H})$ is contained in $\widetilde{L}$. In particular, $K \subseteq \widetilde{L}$. 
\end{lem}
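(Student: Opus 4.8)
The plan is to reduce to the reductive case and use structure theory of algebraic groups over $\C$. Let $\widetilde{H} = R_u(\widetilde{H}) \rtimes \widetilde{L}$ be the chosen Levi decomposition. The key observation is that the unipotent radical $R_u(\widetilde{H})$ is a normal subgroup, so conjugation gives an action of $\widetilde{H}$ (and in particular of $Z(\widetilde{H})$) on $R_u(\widetilde{H})$; moreover any element of the center acts trivially by conjugation on all of $\widetilde{H}$, hence preserves $R_u(\widetilde{H})$ pointwise. The strategy is to show that a torsion element $z$ of $Z(\widetilde{H})$ must have trivial ``unipotent part'' under the decomposition $\widetilde{H} = R_u(\widetilde{H}) \rtimes \widetilde{L}$, so that $z$ lies in $\widetilde{L}$.

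First I would write $z \in Z(\widetilde{H})$ of finite order $n$, and use the semidirect product decomposition to write $z = u \cdot g$ with $u \in R_u(\widetilde{H})$ and $g \in \widetilde{L}$. The element $z$ being central means it commutes with every element of $\widetilde{H}$; restricting to $\widetilde{L}$, $z$ centralizes $\widetilde{L}$, and restricting to $R_u(\widetilde{H})$, $z$ centralizes $R_u(\widetilde{H})$. I would then analyze the Jordan decomposition of $z$ in the algebraic group $\widetilde{H}$: since $z$ has finite order $n$ and we are in characteristic zero, $z$ is a semisimple element (its order being finite forces diagonalizability of its image under any faithful representation). A finite-order element is automatically semisimple over $\C$, so $z = z_s$ has no unipotent part. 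Now I invoke the fact that any semisimple element of $\widetilde{H}$ is conjugate into a Levi factor — more precisely, a semisimple element lies in some maximal reductive subgroup, and since $z$ is central it is fixed by conjugation, so it already lies in $\widetilde{L}$ (up to the fact that all Levi factors are conjugate under $R_u(\widetilde{H})$, and the conjugate of $z$ equals $z$).

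More carefully, here is the cleanest route: the projection $\pi : \widetilde{H} \twoheadrightarrow \widetilde{H}/R_u(\widetilde{H}) \cong \widetilde{L}$ has kernel $R_u(\widetilde{H})$, a unipotent group, hence torsion-free (in characteristic zero a unipotent group has no nontrivial elements of finite order). Therefore $\pi$ restricted to any finite subgroup of $\widetilde{H}$ is injective. Write $z = u g$ as above with $u \in R_u(\widetilde{H})$, $g \in \widetilde{L}$; then $\pi(z) = g$ and $z \mapsto g$ is an isomorphism from the finite cyclic group $\langle z \rangle$ onto $\langle g \rangle$. It remains to see that $z$ actually equals $g$, i.e. $u = 1$. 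For this I would use centrality: $z$ commutes with $g^{-1} = (\pi$-section of its own image$)$, so $z g^{-1} = g^{-1} z$, giving $u g g^{-1} = g^{-1} u g$, i.e. $u = g^{-1} u g$; thus $u$ is fixed by conjugation by $g$. Iterating, since $z^n = 1$ we get $(ug)^n = 1$; expanding in the semidirect product, $(ug)^n = \big(u \cdot (g u g^{-1}) \cdot (g^2 u g^{-2}) \cdots (g^{n-1} u g^{-(n-1)})\big) g^n$, and using $g u g^{-1} = u$ together with $g^n = \pi(z)^n = \pi(z^n) = 1$, this collapses to $u^n g^n = u^n = 1$; since $R_u(\widetilde{H})$ is torsion-free, $u = 1$, so $z = g \in \widetilde{L}$.

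Applying this to every element of a finite subgroup $\Gamma \subseteq Z(\widetilde{H})$ shows $\Gamma \subseteq \widetilde{L}$; taking $\Gamma = K$ gives the ``in particular'' statement, since $K$ is finite and central by hypothesis ($\nu$ is a finite central isogeny). The main obstacle is making sure that the element $u \in R_u(\widetilde{H})$ is genuinely fixed by conjugation by $g$ — this uses that $z = ug$ is central in $\widetilde{H}$, which forces $z$ to commute with $g \in \widetilde{L} \subseteq \widetilde{H}$, and hence $u = z g^{-1}$ to be $g$-conjugation-invariant; once this is in hand, torsion-freeness of the unipotent radical (a standard fact in characteristic zero, which one can cite from the structure theory of algebraic groups or deduce from the fact that $R_u(\widetilde{H})$ as a variety is an affine space via the exponential map) finishes the argument. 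I would flag that torsion-freeness of $R_u(\widetilde{H})$ is really where ``over $\C$'' is used.
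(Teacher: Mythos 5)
Your proof is correct and follows essentially the same route as the paper's: write $z = ug$ with $u \in R_u(\widetilde H)$ and $g \in \widetilde L$, use centrality to show $u$ and $g$ commute, deduce from $z^n=1$ that $u^n = 1$, and conclude $u=1$ because the unipotent radical has no nontrivial torsion over $\C$. The paper phrases that last step as ``$u$ is both unipotent and semisimple, hence trivial,'' but that is the same fact you are invoking.
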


\begin{proof}
 In fact, we will show that any element of finite order in $Z(\widetilde{H})$ is in $\widetilde{L}$. Consider such an $x$, and let $n$ be its order. We can write $x = rs$ with $r \in R_u(\widetilde{H})$ and $s \in \widetilde{L}$. Since $x$ is central, we have in particular $x=s x s^{-1} = sr$, so $r$ and $s$ commute. We then have $1 = x^n = r^n s^n$. These equalities imply that $r^n$ is in $R_u(\widetilde{H}) \cap \widetilde{L}$, thus is trivial. So $r$ is of finite order, and then semisimple. We then obtain $r=1$ (as $r$ is both unipotent and semisimple) and $x = s \in \widetilde{L}.$
\end{proof}
We deduce from lemma \ref{lemme fini centre dans reductif}
 that the map $\nu$ identifies with the natural projection
 $$ R_u(\widetilde{H})\rtimes \widetilde{L} \longrightarrow R_u(\widetilde{H})\rtimes \widetilde{L}/K.$$
 We see in particular that two connected isogeneous groups over $\C$ have isomorphic unipotent radicals. 
We now would like to link the torsion primes for Levi factors of $H$ and $\widetilde{H}$; in fact we show that the torsion primes for $\widetilde{L}$ are already torsion primes for $L$. We start by choosing a maximal torus $\widetilde{T}$ in $\widetilde{L}$; we obtain this way a maximal torus $T = \widetilde{T}/K$ in $L$. As we saw above, the isogeny $\nu$ induces an isogeny at the level of Levi factors. Moreover, the restriction $\nu_{\mid \mathscr{D}(\widetilde{L})}$ of $\nu$ to the derived (semi-simple) subgroup of $\widetilde{L}$ lands in $\mathscr{D}(L)$. We have an induced map on maximal tori $\widetilde{T}\rightarrow \widetilde{T}/K=T$; in turn this gives an injective group morphism $\Car(T) \hookrightarrow \Car(\widetilde{T})$. We finally obtain a surjection of groups $\pi_1(\mathscr{D}(L))=A/\Car(T)\twoheadrightarrow A/\Car(\widetilde{T})= \pi_1(\mathscr{D}(\widetilde{L}))$, where $A$ is the abstract weight lattice of the root system of $\mathscr{D}(L)$ and $\mathscr{D}(\widetilde{L})$. Thus the prime numbers dividing the order of the fundamental group of $\pi_1(\mathscr{D}(\widetilde{L}))$ divides the order of $\pi_1(\mathscr{D}(L))$.

\begin{lem} \label{lemme cohomology H equi}
Take two algebraic groups $\widetilde{H}$ and $H$ and a finite central isogeny $\nu : \widetilde{H}\rightarrow H$; denote its kernel by $K$. Assume \begin{itemize}
    \item the order of $K$ is prime to $\car(\bk) = \ell$, 
    \item $\ell$ is not a torsion prime for the reductive group $H/R_u(H)$.
\end{itemize}

Then the natural morphism 
$$ \mathbf{H}^\bullet_{H}(\pt, \bk)\rightarrow \mathbf{H}^\bullet_{\widetilde{H}}(\pt, \bk) $$
is an isomorphism.
\end{lem}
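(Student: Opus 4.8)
The plan is to strip off, in turn, the unipotent radical and then the semisimple part of $H$ and $\widetilde H$, reducing the statement to an elementary computation with character lattices of tori; almost all of the genuine content is already contained in Lemmas \ref{cohomology equiv = cohomology reduc}, \ref{lemme ordre conoyau} and \ref{lemme fini centre dans reductif}.

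First I would reduce to Levi factors. Writing $H = R_u(H)\rtimes L$ and $\widetilde H = R_u(\widetilde H)\rtimes \widetilde L$ for Levi decompositions, Lemma \ref{lemme fini centre dans reductif} gives $K\subseteq \widetilde L$ and identifies $\nu$ with the projection $R_u(\widetilde H)\rtimes \widetilde L \to R_u(\widetilde H)\rtimes(\widetilde L/K)$; in particular $R_u(\widetilde H) = R_u(H)$ and $\nu$ is compatible with the projections onto the Levi quotients, so that $\widetilde H \to H$ and $\widetilde L \to L$ sit in a commutative square. As established in the paragraph following Lemma \ref{lemme fini centre dans reductif}, every torsion prime for $\widetilde L$ is a torsion prime for $L$, so the hypothesis that $\ell$ is not a torsion prime for $L$ also holds for $\widetilde L$. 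Applying Lemma \ref{cohomology equiv = cohomology reduc} to both $H$ and $\widetilde H$ then shows that the natural maps $\mathbf{H}^\bullet_H(\pt,\bk)\to \mathbf{H}^\bullet_L(\pt,\bk)$ and $\mathbf{H}^\bullet_{\widetilde H}(\pt,\bk)\to \mathbf{H}^\bullet_{\widetilde L}(\pt,\bk)$ are isomorphisms, fitting into a commutative square with $\mathbf{H}^\bullet_H(\pt,\bk)\to\mathbf{H}^\bullet_{\widetilde H}(\pt,\bk)$ and $\mathbf{H}^\bullet_L(\pt,\bk)\to\mathbf{H}^\bullet_{\widetilde L}(\pt,\bk)$. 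Hence I may assume $H = L$ and $\widetilde H = \widetilde L$ are reductive.

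Next I would descend to maximal tori. Choose a maximal torus $\widetilde T$ of $\widetilde L$; since $K\subseteq Z(\widetilde L)$ and the centre lies in every maximal torus, $K\subseteq \widetilde T$ and $T := \widetilde T/K$ is a maximal torus of $L = \widetilde L/K$, with $\widetilde T\to T$ compatible with $\widetilde L\to L$. The isogeny induces an identification of the Weyl groups, say $W$, and a $W$-equivariant inclusion $\Car(T)\hookrightarrow\Car(\widetilde T)$. The second assertion of Lemma \ref{cohomology equiv = cohomology reduc} then identifies $\mathbf{H}^\bullet_L(\pt,\bk)$ and $\mathbf{H}^\bullet_{\widetilde L}(\pt,\bk)$ with $\mathbf{H}^\bullet_T(\pt,\bk)^W$ and $\mathbf{H}^\bullet_{\widetilde T}(\pt,\bk)^W$, compatibly with the natural maps. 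Since a $W$-equivariant isomorphism restricts to an isomorphism on $W$-invariants, it suffices to prove that the natural map $\mathbf{H}^\bullet_T(\pt,\bk)\to \mathbf{H}^\bullet_{\widetilde T}(\pt,\bk)$ is an isomorphism.

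Finally, for a torus $S$ there is a functorial identification $\mathbf{H}^\bullet_S(\pt,\bk)\cong \Sym_\bk(\Car(S)\otimes_{\Z}\bk)$, with $\Car(S)\otimes_{\Z}\bk$ placed in cohomological degree $2$; under it the remaining map becomes $\Sym_\bk$ applied to $\nu^{\#}\otimes\id_\bk : \Car(T)\otimes_{\Z}\bk \to \Car(\widetilde T)\otimes_{\Z}\bk$. Now $\nu^{\#}:\Car(T)\to\Car(\widetilde T)$ is injective (as $\nu$ is surjective) and, by Lemma \ref{lemme ordre conoyau}, its cokernel $A$ is finite with every prime divisor of $|A|$ dividing $|K|$, hence prime to $\ell$. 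Applying $-\otimes_{\Z}\bk$ to $0\to\Car(T)\to\Car(\widetilde T)\to A\to 0$ and using that $A\otimes_{\Z}\bk = 0$ and $\mathrm{Tor}_1^{\Z}(A,\bk) = 0$ — both because multiplication by $|A|$ is invertible on $\bk$ — shows that $\nu^{\#}\otimes\id_\bk$ is an isomorphism, hence so is its symmetric algebra. I do not anticipate a real obstacle here: the only delicate points are keeping track of the compatibility of the successive identifications, so that one is indeed following the \emph{natural} map throughout, and the vanishing of $\mathrm{Tor}_1^{\Z}(A,\bk)$, which is precisely where the hypothesis $\gcd(|K|,\ell)=1$ is used.
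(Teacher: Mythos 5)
Your proof follows essentially the same route as the paper's: reduce to Levi factors via Lemmas \ref{cohomology equiv = cohomology reduc} and \ref{lemme fini centre dans reductif}, pass to $W$-invariants of torus-equivariant cohomology, and conclude from Lemma \ref{lemme ordre conoyau} after tensoring the inclusion of character lattices with $\bk$. The only difference is that you spell out the vanishing of $A\otimes_{\Z}\bk$ and $\mathrm{Tor}_1^{\Z}(A,\bk)$, which the paper leaves implicit in the phrase ``taking tensor product with $\bk$.''
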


\begin{proof} 

According to lemmas \ref{cohomology equiv = cohomology reduc} and \ref{lemme fini centre dans reductif}, we can assume that $\widetilde{H}$ and $H$ are reductive. As above we consider a maximal torus $\widetilde{T}$ in $\widetilde{H}$ and the maximal torus $T = \widetilde{T}/K$ under $\widetilde{T}$ in $H$. The isogeny $\nu$ induces an identification of the Weyl groups $\widetilde{W}$ and $W$ of $\widetilde{H}$ and $H$. The induced map $\widetilde{T}\rightarrow T$ is still an isogeny whose kernel has order prime to $\ell$. 
The map $\nu$ induces an injective morphism of abelian group $\Car(T) \rightarrow \Car(\widetilde{T})$ whose cokernel is a finite group of order prime to $\ell$ according to lemma \ref{lemme ordre conoyau}. Taking tensor product with $\bk$, we obtain an isomorphism 
 $\Car(T)\otimes_{\Z} \bk  \xrightarrow{\sim}\Car(\widetilde{T})\otimes_{\Z} \bk$ and then an isomorphism
\begin{equation}\label{iso cohomology T equiv} \Sym\left(\Car(T)\otimes_{\Z} \bk\right)  \xrightarrow{\sim}\Sym(\Car(\widetilde{T})\otimes_{\Z} \bk).
\end{equation}
Using the (well-know) fact that for a torus, we have $\mathbf{H}^{\bullet}_{T}(\pt)\cong \Sym\left(\Car(T)\otimes_{\Z} \bk\right)$, we obtain 
\begin{equation}
    \mathbf{H}^{\bullet}_{T}(\pt)\cong \mathbf{H}^{\bullet}_{\widetilde{T}}(\pt).
\end{equation}

The groups $\widetilde{W}$ and $W$ act on the corresponding characters groups $\Car(\widetilde{T})$ and $\Car(T)$. Moreover, the map $\Car(T) \rightarrow \Car(\widetilde{T})$ induced by the isogeny commutes with the $\widetilde{W}$ and $W$ actions.
The action of $W$ on $\Sym(\Car(T)\otimes_{\Z} \bk)$ induces the $W$-action on $\mathbf{H}^\bullet_{H}(\pt, \bk)$ (and similarly for $\widetilde{H}$).
As discussed above the statement of the lemma, since $\car(\bk)$ is not a torsion prime $H$, it is not torsion for $\widetilde{H}$. Thanks to lemma \ref{cohomology equiv = cohomology reduc} we have 
 $\mathbf{H}^{\bullet}_{H}(\pt, \bk) \cong \Sym(\Car(T)\otimes_{\Z} \bk)^{W}$ and $\mathbf{H}^{\bullet}_{\widetilde{H}}(\pt, \bk) \cong \Sym(\Car(\widetilde{T})\otimes_{\Z} \bk)^{\widetilde{W}}$. Thus the isomorphism \eqref{iso cohomology T equiv} induces an isomorphism 
 $$\mathbf{H}^{\bullet}_{\widetilde{H}}(\pt, \bk)\cong \mathbf{H}^{\bullet}_{H}(\pt, \bk).$$
 This concludes the proof of the lemma.
\end{proof}

\subsection{A spectral sequence for equivariant cohomology} 

 \begin{lem} \label{lemme spectral sequence}
 Consider a $G$-variety $X$, for $G$ a connected algebraic group.
 For an object $\fff \in D^b_G(X, \bk)$ we have a converging spectral sequence
 $$E^{p,q}_2 = \mathbf{H}^p_{G}(\pt, \bk)\otimes_{\bk}\mathbf{H}^q(X, \fff) \Rightarrow \mathbf{H}^{p+q}_G(X, \fff).$$
\end{lem}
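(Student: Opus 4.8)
The plan is to construct this spectral sequence as the Grothendieck/Leray-type spectral sequence associated to the fibration defining equivariant cohomology, using the simplicial (or fibered) model of the equivariant derived category already recalled in the proof of Lemma \ref{lemme A agit sur Id}. Concretely, pick a contractible free $G$-space $EG$ with quotient $BG$, and form the Borel construction $X_G = (EG \times X)/G$ with its projection $\pi : X_G \to BG$, whose fibre is $X$. For $\fff \in D^b_G(X,\bk)$, there is a corresponding object $\fff_G$ on $X_G$ (pulled back from $X$ along $EG \times X \to X$ and descended), and by definition $\mathbf{H}^{p+q}_G(X,\fff) = \mathbf{H}^{p+q}(X_G, \fff_G)$. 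The first step is thus to write down the Leray spectral sequence of $\pi$ applied to $\fff_G$, namely
\begin{equation*}
E_2^{p,q} = \mathbf{H}^p\bigl(BG, R^q\pi_\ast(\fff_G)\bigr) \Rightarrow \mathbf{H}^{p+q}(X_G,\fff_G) = \mathbf{H}^{p+q}_G(X,\fff).
\end{equation*}

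The second step is to identify the $E_2$-page with the asserted tensor product. Since $\fff_G$ is, by construction, constant along the fibres of $\pi$ in the appropriate sense — it is pulled back from $X$ on the total space of the trivial-up-to-$G$-action bundle $EG \times X \to X$ — the higher direct images $R^q\pi_\ast(\fff_G)$ should be the \emph{constant} (or at least locally constant with trivial monodromy, since $BG$ is simply connected when $G$ is connected) sheaves on $BG$ with stalk $\mathbf{H}^q(X,\fff)$. This is where connectedness of $G$ is used: $\pi_1(BG) = \pi_0(G) = 1$, so a local system on $BG$ is automatically constant, and one gets $R^q\pi_\ast(\fff_G) \cong \underline{\mathbf{H}^q(X,\fff)}_{BG}$. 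Feeding this into the Leray $E_2$-term gives $\mathbf{H}^p(BG,\bk) \otimes_\bk \mathbf{H}^q(X,\fff)$, and $\mathbf{H}^p(BG,\bk) = \mathbf{H}^p_G(\pt,\bk)$ by definition of equivariant cohomology of a point. (Strictly, one works with finite-dimensional approximations $E_N G$ of $EG$ and passes to the limit, as is standard for the Borel construction; alternatively one uses the simplicial space $[n] \mapsto G^n \times X$ and the associated descent spectral sequence, whose $E_1$-page computes group cohomology of $G$ with coefficients in $\mathbf{H}^\bullet(G^\bullet \times X, \fff)$.)

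The convergence statement is then the convergence of the Leray spectral sequence, which holds because $\fff$ is bounded and $X$ and $G$ have finite-dimensional (and finitely generated) cohomology in the constructible setting, so the $E_2$-page is bounded and the spectral sequence degenerates after finitely many pages on each line; in the approximation model one checks that the spectral sequences for $E_N G$ stabilise in any fixed total degree as $N \to \infty$.

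The main obstacle I anticipate is the clean identification $R^q\pi_\ast(\fff_G) \cong \underline{\mathbf{H}^q(X,\fff)}_{BG}$: one must be careful that $\fff_G$ really is (cohomologically) locally constant along $\pi$ — this requires knowing that $\fff_G$ restricted to each fibre $X \hookrightarrow X_G$ recovers $\For(\fff)$ compatibly, which is part of the defining data of the equivariant derived category, together with a base-change argument along the locally trivial fibration $\pi$ — and that the resulting local system on $BG$ has trivial monodromy, for which connectedness of $G$ is exactly what is needed. Once that identification is in hand, the rest is the formal machinery of the Leray spectral sequence and the Künneth-type splitting of the $E_2$-term over the field $\bk$.
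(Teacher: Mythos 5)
Your proof is essentially the same as the paper's. You both work with the Borel construction $X_G = EG\times^G X \to BG$, recognize the target $\mathbf{H}^\bullet_G(X,\fff)$ as cohomology of $X_G$, invoke the Leray/Grothendieck spectral sequence for the composite $X_G \to BG \to \pt$, and then identify the $E_2$-page by observing that the higher direct images to $BG$ are locally constant with stalk $\mathbf{H}^q(X,\fff)$, hence constant because $BG$ is simply connected when $G$ is connected. The paper cites \cite[Lemma 2.3.2]{BL} to realize the equivariant object as an $\fff \in D^b(X_G)$ with $p^\ast\fff_X \cong q^\ast\fff$, which is exactly the compatibility you flag as ``part of the defining data of the equivariant derived category''; otherwise the two arguments coincide step for step, with your version adding the standard remarks on finite-dimensional approximations $E_NG$ and convergence that the paper leaves implicit.
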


 \begin{proof}
 Fix a contractible free $G$-space $EG$.
The projection map $p : EG \times X \rightarrow X$ gives an $\infty$-acyclic resolution of $X$ in the sense of \cite[Definition 1.9.1]{BL} for the action $g\cdot (e, x) = (eg^{-1}, gx)$. The quotient is $EG\times^G X$, whith $BG$ the classifying space; since $G$ is connected, $BG$ is simply connected. Denote by $q : EG \times X \rightarrow EG \times^G X$ the quotient map.
Thanks to \cite[Lemma 2.3.2]{BL} we can consider our object in the equivariant derived category as an object $\fff$ in $D^b(EG\times^G X)$ such that there exists a $\fff_X\in D^b(X)$ and an isomorphism $p^\ast (\fff_X) \cong q^\ast(\fff)$.

 Consider the following diagram 
 $$EG\times^G X \xrightarrow{\pr} BG \xrightarrow{a} \{\pt\},$$
 with $\pr([e,x]) = [e]$.
We have a Grothendieck spectral sequence $R^q a_{\ast} \circ R^p(\pr)_\ast  \Rightarrow R^{p+q}(a\circ \pr)_{\ast}$. We apply it to $\fff$. The object $R^p (\pr)_{\ast}(\fff)$ has constant stalks isomorphic to $\mathbf{H}^p(X, \fff_X)$. We thus obtain a local system on $BG$; the latter space being simply connected, we have in fact a constant local system isomorphic to $ \fbk_{BG} \otimes_{\bk} \mathbf{H}^p(X, \fff) $. When we apply $R^q a_{\ast}$ to this object, we obtain $\mathbf{H}^p_{G}(\pt, \bk)\otimes_{\bk}\mathbf{H}^q(X, \fff)$.
 Since $R^{p+q}(a\circ \pr)_{\ast}(\fff) =\mathbf{H}^{p+q}_G(X, \fff) $, we get the result.
 \end{proof}

\subsection{Full faithfulness of $p^\ast_\nu$} \label{section fully faithfulness pnu} 

We keep the notations of the beginning of \S \ref{section general consideration}.
We show here that under suitable assumptions, the functor $p^\ast_\nu$ is fully faithful. 
\begin{lem} \label{lemme bonne def LY}
With the notations of section \ref{section general consideration}, assume that $\ell$ is not a torsion prime for $H/R_u(H)$.
Then the functor $(p_\nu)^\ast$ is fully faithful.
\end{lem}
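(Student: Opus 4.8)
The plan is to compute the relevant $\Hom$-spaces as equivariant hypercohomology groups and to compare them along the isogeny $\nu$. Recall that for $\fff,\fgg\in D^b_{\widetilde{H_2}}(X,\bk)$ and every $n\in\Z$ one has
$$\Hom_{D^b_{\widetilde{H_2}}(X,\bk)}(\fff,\fgg[n])\cong\mathbf{H}^n_{\widetilde{H_2}}(X,R\sHom(\fff,\fgg)),$$
and likewise after applying $(p_\nu)^\ast$. Since $(p_\nu)^\ast$ is a $\ast$-pullback — locally, on the resolutions used to define it in \cite{BL}, of the form of a smooth pullback, exactly as in the proof of Lemma \ref{lemme isomorphisme de Ind sur les Hom} — it commutes with $R\sHom$, so that $R\sHom((p_\nu)^\ast\fff,(p_\nu)^\ast\fgg)\cong(p_\nu)^\ast R\sHom(\fff,\fgg)$. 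Hence, writing $\mathscr{M}:=R\sHom(\fff,\fgg)\in D^b_{\widetilde{H_2}}(X,\bk)$, the lemma reduces to proving that the natural morphism
$$\mathbf{H}^\bullet_{\widetilde{H_2}}(X,\mathscr{M})\longrightarrow\mathbf{H}^\bullet_{\widetilde{H_1}}(X,(p_\nu)^\ast\mathscr{M})$$
is an isomorphism for every $\mathscr{M}\in D^b_{\widetilde{H_2}}(X,\bk)$.

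For this I would apply the spectral sequence of Lemma \ref{lemme spectral sequence} both to $\widetilde{H_2}$ acting on $\mathscr{M}$ and to $\widetilde{H_1}$ acting on $(p_\nu)^\ast\mathscr{M}$. Choosing compatible models for the classifying spaces fitting into a fibration $B\widetilde{H_1}\to B\widetilde{H_2}$ induced by $\nu$, together with the induced map of Borel constructions lying over it, the isogeny $\nu$ yields a morphism from the second spectral sequence to the first. Two facts make the comparison effective: first, $(p_\nu)^\ast$ commutes with the forgetful functors, i.e. $\For\circ(p_\nu)^\ast=\For$, so the factor $\mathbf{H}^q(X,\mathscr{M})$ in the $E_2$-page is literally the same for the two spectral sequences; second, on the other factor the induced map is the natural morphism $\mathbf{H}^p_{\widetilde{H_2}}(\pt,\bk)\to\mathbf{H}^p_{\widetilde{H_1}}(\pt,\bk)$. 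By Lemma \ref{lemme cohomology H equi}, this last morphism is an isomorphism: the kernel of $\nu$ has order prime to $\ell$ by hypothesis, and $\ell$ is not a torsion prime for $\widetilde{H_2}/R_u(\widetilde{H_2})$, because the torsion primes for a Levi factor of $\widetilde{H_2}$ are torsion primes for a Levi factor of $H$ (the computation carried out just before Lemma \ref{lemme cohomology H equi}, applied to $\nu_2$) and $\ell$ is assumed not to be a torsion prime for $H/R_u(H)$. Therefore the map of $E_2$-pages is an isomorphism; both spectral sequences are bounded below in the $p$-direction and have finite-dimensional entries in each total degree (as $\mathscr{M}$ is bounded and constructible), so convergence is unproblematic and the standard comparison theorem for spectral sequences gives an isomorphism on the abutments, which is exactly the claim.

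Combining the two steps, $(p_\nu)^\ast$ induces bijections on all $\Hom$-spaces (and, with shifts, on all $\Ext$-spaces), hence is fully faithful. The main obstacle is the bookkeeping in the second step: making precise the functoriality of the spectral sequence of Lemma \ref{lemme spectral sequence} along the isogeny $\nu$ — choosing compatible acyclic resolutions and identifying the induced map on $E_2$ — together with the two formal properties of $(p_\nu)^\ast$ invoked above, namely compatibility with the forgetful functor and with $R\sHom$, both of which follow from the definitions of \cite{BL} but require care to state correctly in the fibered-category language.
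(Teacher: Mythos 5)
Your proof follows exactly the same route as the paper's: reduce to the spectral sequence of Lemma \ref{lemme spectral sequence} applied to $R\sHom(\fff,\fgg)$, observe that $(p_\nu)^\ast$ induces a morphism of these spectral sequences whose effect on the first factor of the $E_2$-page is the restriction map $\mathbf{H}^\bullet_{\widetilde{H_2}}(\pt,\bk)\to\mathbf{H}^\bullet_{\widetilde{H_1}}(\pt,\bk)$, and invoke Lemma \ref{lemme cohomology H equi} to see this is an isomorphism. Your write-up is in fact somewhat more careful than the paper's (e.g.~you make explicit the compatibility of $(p_\nu)^\ast$ with $R\sHom$ and $\For$, and you note that the torsion-prime hypothesis propagates from $H$ to $\widetilde{H_2}$ via the discussion preceding Lemma \ref{lemme cohomology H equi}), but the substance is the same.
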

\begin{proof}
Consider $\fff, \fgg$ two objects in $D^b_{\widetilde{H_i}}(X, \bk)$; applying lemma \ref{lemme spectral sequence} to $R\sHom(\fff, \fgg)$ one obtains the following converging spectral sequence: 
$$E^{p,q}_2 = \mathbf{H}^p_{\widetilde{H_i}}(\pt, \bk)\otimes_{\bk} \Hom_{D^b(X, \bk)}(\fff, \fgg[q]) \Rightarrow \Hom_{D^b_{\widetilde{H_i}}(X)}(\fff, \fgg[p+q]).$$
On the morphism spaces, the functor $p^\ast_{\nu}$ comes from a morphism of spectral sequences induced by the map 
\begin{equation}\label{application sur les cohomologies equivariantes}
\mathbf{H}^\bullet_{\widetilde{H_2}}(\pt, \bk) \rightarrow \mathbf{H}^\bullet_{\widetilde{H_1}}(\pt, \bk).\end{equation}

According to lemma \ref{lemme cohomology H equi}, the morphism \eqref{application sur les cohomologies equivariantes} is an isomorphism (recall that the cardinality of the kernels of $\nu_1$, $\nu_2$ and $\nu$ are prime to $\ell$). This allows us to conclude that $p^\ast_\nu$ is fully faithful, and thus lemma \ref{lemme bonne def LY} is proved.
\end{proof}

\subsection{Essential surjectivity of $p^\ast_\nu$}\label{subsection essential surjectivity}

We now show the essential surjectivity of $p^\ast_\nu$. We keep the notations of \S \ref{section general consideration}. We also keep the assumptions of lemma \ref{lemme cohomology H equi} on $\ell$; thanks to lemma \ref{lemme bonne def LY}, the functor $p^\ast_{\nu} : D_2 \rightarrow D_1$ is fully faithful.
Since any object in $D_1$ is an extension of its perverse cohomology objects, using lemma \ref{lemme bonne def LY}, in order to show that $p^\ast_\nu$ is essentially surjective, we only need to check that the perverse sheaves in $D_1$ are in the essential image, thanks to the following well known fact:
\begin{center}
    ``Assume that $F : \mathcal{C}\rightarrow \mathcal{D}$ is a functor between triangulated categories which is full, and that the essential image of $F$ contains a family of objects generating $\mathcal{D}$ (as a triangulated category), then $F$ is essentially surjective."
\end{center}
Set $K := \ker(\nu)$. In order for a perverse sheaf $\fff$ on $X$ to define a $\widetilde{H_i}$-equivariant object, we just need the existence of an isomorphism between the pullbacks along the projection map and the action map (see \cite[\S A.1, 3rd ``reasonable definition"]{BaR}). Now if $\fff$ is a perverse sheaf in $D_1$, then by definition it carries a $\widetilde{H}_1$-equivariant structure such that $K$ acts trivially (via the map of lemma \ref{lemme A agit sur Id}). Indeed, by definition, we have that the group $K_1 = \ker(\nu_1) = \nu^{-1}(\ker(\nu_2))$ acts on $\fff$ via the character $\chi_1 = \chi_2 \circ \nu_{\mid \ker(\nu_1)}$. But $K \subseteq \ker(\chi_1)$, so $K$ acts trivially on $\fff$.

One then remarks that the isomorphism $\vartheta$ between the two pullbacks of $\fff$ to $\widetilde{H}_1\times X$ along the action map and the projection $\widetilde{H}_1\times X \rightarrow X$
is the identity map of $({\pr_2}_{\mid K \times X})^\ast \fff$ when restricted to $K \times X$, hence it descends to an isomorphism on $\widetilde{H}_2 \times X = \widetilde{H}_1/K \times X$. 
Let us elaborate on this fact: the perverse sheaves form a stack for the \'etale topology (see \cite[\S 2.2.19]{BBD}). Therefore to see that our isomorphism descends to an isomorphism on $\widetilde{H}_2 \times X$, we have to show that its pullbacks under the two projections $\pi_1, \pi_2 : ( \widetilde{H_1}\times X) \times_{\widetilde{H_2}\times X} ( \widetilde{H_1}\times X) \rightarrow  \widetilde{H_1}\times X$ coincide. 

\begin{lem}
We have $\pi_1^\ast \vartheta = \pi_2^\ast \vartheta$.
\end{lem}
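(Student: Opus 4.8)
The plan is to verify the equality $\pi_1^\ast\vartheta = \pi_2^\ast\vartheta$ by exploiting the group structure of the fiber product and the cocycle condition already established for $\vartheta$. First I would make the fiber product explicit: an element of $(\widetilde{H_1}\times X)\times_{\widetilde{H_2}\times X}(\widetilde{H_1}\times X)$ is a pair $((g_1,x),(g_2,x'))$ with the images in $\widetilde{H_2}\times X$ agreeing, i.e. $x = x'$ and $g_1$, $g_2$ have the same image in $\widetilde{H_2} = \widetilde{H_1}/K$; so $g_2 = g_1 k$ for a unique $k\in K$. This identifies the fiber product with $(\widetilde{H_1}\times X)\times K$, with $\pi_1((g,x),k) = (g,x)$ and $\pi_2((g,x),k) = (gk, x)$.

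Next I would recall that, by the construction of $\vartheta$ in Lemma~\ref{lemme A agit sur Id}, the isomorphism $\vartheta_{\fff}\colon \pr_2^\ast\fff \xrightarrow{\sim} a^\ast\fff$ on $\widetilde{H_1}\times X$ is itself an isomorphism in the equivariant category $D^b_{\widetilde{H_1}}(\widetilde{H_1}\times X,\bk)$ for the conjugation-and-translation action $h\cdot(g,x) = (hgh^{-1},h\cdot x)$, and that it satisfies the cocycle identity $m^\ast\vartheta = \pr_{23}^\ast\vartheta\circ \pr_{12}^\ast\vartheta$ on $\widetilde{H_1}\times\widetilde{H_1}\times X$, where $m$ is multiplication in the first two factors. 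Restricting this cocycle identity along the closed embedding $K\times(\widetilde{H_1}\times X)\hookrightarrow \widetilde{H_1}\times\widetilde{H_1}\times X$, $(k,(g,x))\mapsto (g,k,x)$ (using that $K$ is central so the order does not matter up to the relevant identifications), and using that the restriction of $\vartheta$ to $\{k\}\times X$ is by definition $\vartheta_{\fff,k} = \chi_1(k)\id_{\fff}$, which here equals $\id$ since $K\subseteq\ker(\chi_1)$, one obtains precisely that pulling $\vartheta$ back along the translation-by-$k$ map $(g,x)\mapsto(gk,x)$ agrees with $\vartheta$ itself. Translating this through the identification of the fiber product above gives $\pi_1^\ast\vartheta = \pi_2^\ast\vartheta$.

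The main obstacle I anticipate is bookkeeping rather than anything deep: carefully matching the two projections $\pi_1,\pi_2$ with the maps appearing in the cocycle condition, and making sure the action-map and projection-map pullbacks of $\fff$ are being compared on the right space (so that the various smooth base change identifications line up). In particular one must check that the map $(g,x)\mapsto(gk,x)$ on $\widetilde{H_1}\times X$ is compatible with $a$ and $\pr_2$ — which it is, because $k$ acts trivially on $X$, so $a(gk,x) = a(g,x)$ and $\pr_2(gk,x) = \pr_2(g,x)$ — and that under this map $a^\ast\fff$ and $\pr_2^\ast\fff$ are canonically unchanged, so that $\vartheta$ and its pullback live in the same Hom-space and the comparison makes sense. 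Once these identifications are pinned down the equality is immediate from $\chi_1|_K = 1$ and the cocycle condition, and since the perverse sheaves form an étale stack this is exactly what is needed for $\vartheta$ to descend to $\widetilde{H_2}\times X$, completing the essential surjectivity argument.
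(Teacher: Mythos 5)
Your approach is essentially the paper's: you identify the fiber product with $\widetilde{H_1}\times K\times X$ so that $\pi_1,\pi_2$ become $\pr_{1,3}$ and $m_K\times\id_X$, then apply the cocycle condition together with $\vartheta_{\mid K\times X}=\id$ (which holds because $\chi_1$ kills $K$). One small slip: the cocycle identity must read $(m\times\id_X)^\ast\vartheta = (\id_{\widetilde{H_1}}\times a)^\ast\vartheta\circ\pr_{2,3}^\ast\vartheta$ rather than involving $\pr_{12}^\ast\vartheta$, since $\vartheta$ compares $\pr_2^\ast\fff$ with $a^\ast\fff$ and $\pr_{12}^\ast\vartheta$ does not typecheck; the rest of your restriction argument is exactly the paper's.
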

\begin{proof}
The different maps that we will consider are depicted in the following (non commutative) diagram.
\vspace{-0.2cm}
 $$ {\small \xymatrix{
\widetilde{H_1}\times \widetilde{H_1}\times X  \ar@<10pt>[rrr]^{\id_{\widetilde{H_1}} \times a} \ar@<-10pt>[rrr]^{m\times \id_X} \ar[rrr]^{\pr_{2,3}} && & \widetilde{H_1}\times X \ar@<2pt>[rr]^a \ar@<-2pt>[rr]_{\pr_2} &&  X \\
\widetilde{H_1}\times K \times X\ar@{^{(}->}[u]_{\widetilde{\iota}} \ar[rrr]^{\pr_{2,3}} &&& K \times X \ar@{^{(}->}[u]_{\iota} \ar@<2pt>[urr] \ar@<-2pt>[urr] &&
 }}
 $$
(the two maps from $K \times X$ to $X$ are given by $a \circ \iota$ and $\pr_2 \circ \iota$).
The isomorphism $\vartheta$ satisfies the cocycle condition 
$ (m \times \id_X)^\ast \vartheta = (\id_{\widetilde{H_1}}\times a)^\ast \vartheta \circ \pr_{2,3}^\ast \vartheta.$
 We want to show that the following diagram commutes: 
 $$\xymatrix{
 \pi_1^\ast a^\ast (\fff) \ar@{=}[r] \ar[d]^{\pi_1^\ast \vartheta} &   \pi_2^\ast a^\ast (\fff)  \ar[d]^{\pi_2^\ast \vartheta} \\
 \pi_1^\ast \pr_2^\ast \fff \ar@{=}[r] &\pi_2^\ast \pr_2^\ast \fff.
 }
 $$

  We consider the diagram
 \begin{equation}\label{diagram fibré}{\small \vcenter{\xymatrix{
 \left( \widetilde{H_1}\times X\right) \times_{\widetilde{H_2}\times X} \left( \widetilde{H_1}\times X\right) \ar@<2pt>[dr]^{\pi_1} \ar@<-2pt>[dr]_{\pi_2}  && \widetilde{H_1}\times K \times X \ar[ll]_-{\sim} \ar@<2pt>[dl]^{m_K \times \id_X} \ar@<-2pt>[dl]_{\pr_{1,3}} \\
 & \widetilde{H_1}\times X&
 }}}
 \end{equation}
 where the horizontal map is given by $(p, k, x) \mapsto ((p, x), (pk, x))$.

 Thanks to diagram \eqref{diagram fibré}, we are reduced to the commutativity of
\begin{equation}\label{diagram fibré iso}\vcenter{\xymatrix{
 \pr_{1,3}^\ast a^\ast (\fff) \ar@{=}[r] \ar[d]^{\pr_{1,3}^\ast \vartheta} &   (m_K\times \id_X)^\ast a^\ast (\fff)  \ar[d]^{(m_K\times \id_X)^\ast \vartheta} \\
 \pr_{1,3}^\ast \pr_2^\ast \fff \ar@{=}[r] &(m_K\times \id_X)^\ast \pr_2^\ast \fff.
 }}
 \end{equation}
 But now, we use the isomorphism $(m_K \times \id_X)^\ast \cong \widetilde{\iota}^\ast \circ (m\times \id_X)^\ast$ and the cocycle condition to find that 
 \begin{align*}
     (m_K \times \id_X)^\ast \vartheta  & = \widetilde{\iota}^\ast (\id_{\widetilde{H_1}}\times a)^\ast \vartheta \circ \widetilde{\iota}^\ast \pr_{2,3}^\ast \vartheta \\
     & = \pr_{1,3}^\ast\vartheta \circ \underbrace{\pr_{2,3}^\ast \iota^\ast \vartheta}_{\id}
 \end{align*}
 by the assumption that $\vartheta_{\mid K \times X}= \id$.
 Thus the diagram \eqref{diagram fibré iso} commutes and we are done.
\end{proof}

We see that $\fff$ is a $\widetilde{H}_2$-equivariant perverse sheaf, thus it defines an object in the $\widetilde{H}_2$-equivariant bounded category. Moreover, the isomorphism 
\begin{equation}\label{cette equation}
a_2^\ast\fff \rightarrow \pr_2^\ast\fff
\end{equation}(where $a_2$ is the action map for $\widetilde{H}_2$ acting on $X$) on $\widetilde{H}_2\times X$ comes from the one on $\widetilde{H}_1 \times X$ (i.e. is the descent of $\vartheta$ to $\widetilde{H}_2\times X$). Since $\nu : \widetilde{H}_1 \rightarrow \widetilde{H}_2$ is an isogeny and hence is surjective, it is easy to see that the action of $K_2$ on $\fff$ via \eqref{cette equation} comes from the action of $K_1$ on $\fff$. So in particular, one deduces that $K_2$ acts via $\chi_2$ on $\fff$. Thus our functor is essentially surjective.

\begin{defn}
In the above situation, we set 
$$D^{\mathrm{LY}}_{ H}(X, \bk)_{\small{\fll}} : =  D^b_{\widetilde{H}, \chi_{\small{\fll}}}(X, \bk)$$ and call this category the Lusztig--Yun monodromic equivariant category of monodromy $\fll$.
\end{defn}

\begin{rk}
In the case where $X$ admits a stratification $\str$, we can 
make all the preceding construction replacing the category $D^b_H(X, \bk)$ by the $\str$-constructible derived equivariant category $D^b_{H, \str}(X, \bk)$. 
We obtain the $\str$-constructible Lusztig--Yun equivariant monodromic category $D^{\mathrm{LY}}_{ H, \str}(X, \bk)_{\small{\fll}}$.
\end{rk}

Let us fix an isogeny $\widetilde{H}\xrightarrow{\nu} H$ and a character $\chi$ of its kernel such that we can view $D^{\mathrm{LY}}_{H}(X, \bk)$ as a full subcategory of $D^b_{\widetilde{H}}(X, \bk)$. One can consider the restriction of the forgetful functor $\For : D^b_{\widetilde{H}}(X, \bk) \rightarrow D^b_{\str}(X, \bk)$ to $D^{\mathrm{LY}}_{H}(X, \bk)_{\small{\fll}}$. Moreover, this functor does not depends of the choices made for $\nu$ and $\chi$, in the following sense. With the notations of \S \ref{section general consideration}, we have an isomorphism $\For_1 \circ (p_\nu)^\ast \cong \For_2$ of functors $D_2 \rightarrow D^b(X, \bk)$ (where $\For_i$ is the forgetful functor $D_i \rightarrow D^b(X, \bk)$ for $i=1,2$).


\section{Perverse monodromic sheaves on stratified $T$-varieties} \label{section Perverse monodromic sheaves on stratified $T$-varieties}



\subsection{Local systems on $T$}

We come back to the main interest of the present paper: consider a torus $T$ and $(X, \str)$ any stratified $T$-variety with $T$-stable strata. 

According to \S \ref{subsection : Setting and Notations}, the isomorphism classes of irreducible local systems on $T$ (which correspond to irreducible representations of $\Cocar(T)$) are in bijection with elements of $T^\vee_{\bk}$. 
For $t \in T^\vee_{\bk}$, we denote by $\fll^T_{t}$ the rank-one local system on $T$ corresponding to the $\Cocar(T)$-representation $L_t := \bk[\Cocar(T)]/\langle e^\lambda - \lambda(t) \mid \lambda \in \Cocar(T)\rangle$.
Reasoning in terms of associated representations, it is not difficult to see that $\fll^T_t$ is multiplicative.

\begin{lem} \label{isotypic component of the pushforward}
For any simple local system $\fll^T_t$ on $T$ there exists a finite central isogeny $\nu : \widetilde{T}\rightarrow T$ and a character $\chi$ of $K:= \ker(\nu)$ such that
\begin{enumerate}
    \item the cardinality of $K$ is prime to $\ell$, 

\item $\fll^T_t = \left(\nu_\ast(\fbk_{T})\right)_\chi,$
the isotypic component of $\nu_\ast(\fbk_{T})$ on which $K$ acts via $\chi$.
\end{enumerate}
\end{lem}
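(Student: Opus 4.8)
The plan is to take for $\nu$ the $d$-th power isogeny of $T$, where $d$ is the order of $t$ in $T^\vee_{\bk}$, and to recognize $\fll^T_t$ as one of the rank-one summands of the direct image of the constant sheaf along this covering. First, recall from Remark \ref{remarque tore dual d'ordre fini} that $t$ has finite order $d$ and that $\ell \nmid d$. Set $\nu := e_d \colon T \to T$, $s \mapsto s^d$. Since $T$ is a torus this is an isogeny, and since $T$ is abelian it is central; its kernel $K = \ker(e_d)$ has order $d^{\rak(T)}$, which is prime to $\ell$, so point (1) holds. Moreover $e_d$ is a Galois covering with deck group $K$ acting by translations $s \mapsto \zeta s$ ($\zeta \in K$), so over a small evenly covered open set the fibre of $e_d$ is a $K$-torsor; hence the stalks of the local system $(e_d)_\ast \fbk_T$ are, as $\bk[K]$-modules, isomorphic to the regular representation $\bk[K]$. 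As $|K|$ is prime to $\ell$ and $\bk$ is algebraically closed, $\bk[K] \cong \bigoplus_{\chi \in \widehat{K}} \bk_\chi$ with each character of $K$ occurring with multiplicity one; applying the associated idempotent decomposition fibrewise yields a decomposition $(e_d)_\ast \fbk_T = \bigoplus_{\chi \in \widehat{K}} \bigl( (e_d)_\ast \fbk_T \bigr)_\chi$ into \emph{rank-one} local systems on $T$.

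Next I would identify $\fll^T_t$ among these summands. The map $e_d$ induces multiplication by $d$ on $\pi_1(T) \cong \Cocar(T)$, so by Theorem \ref{theoreme representation systèmes locaux} the local system $e_d^\ast \fll^T_t$ corresponds to the $\Cocar(T)$-representation in which $\lambda$ acts by $(d\lambda)(t) = \lambda(t)^d = \lambda(t^d) = 1$; that is, $e_d^\ast \fll^T_t$ is constant (this is also a special case of Lemma \ref{lemme tiré en arrière local devient constant}). Since $e_d$ is a finite covering, $e_d^\ast$ is left adjoint to $(e_d)_\ast$, whence
\begin{equation*}
\Hom\bigl(\fll^T_t,\, (e_d)_\ast \fbk_T\bigr) \;\cong\; \Hom\bigl(e_d^\ast \fll^T_t,\, \fbk_T\bigr) \;\cong\; \Hom(\fbk_T, \fbk_T) \;\neq\; 0.
\end{equation*}
A nonzero morphism $\fll^T_t \to (e_d)_\ast \fbk_T$ has nonzero component along some summand $\bigl((e_d)_\ast \fbk_T\bigr)_\chi$, and a nonzero morphism between two rank-one local systems on the connected space $T$ is an isomorphism (its kernel and cokernel are local systems of rank $0$). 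Hence $\fll^T_t \cong \bigl((e_d)_\ast \fbk_T\bigr)_\chi$ for this $\chi$, which is point (2).

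The only slightly delicate point is the first paragraph: checking that $(e_d)_\ast \fbk_T$ genuinely splits into rank-one pieces indexed by $\widehat{K}$. This rests on recognizing $e_d$ as a $K$-torsor, so that its direct image is the ``induced'' local system whose stalks carry the regular representation of $K$, combined with the fact (already used in \S\ref{subsection equivariant monodromic LY}) that $\bk[K] = \bigoplus_\chi \bk$ because $|K|$ is prime to $\ell$. Everything else is a formal adjunction argument, together with the input from Remark \ref{remarque tore dual d'ordre fini} that $\ell$ is prime to the order of $t$, which is precisely what makes the chosen isogeny admissible.
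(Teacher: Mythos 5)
Your proof is correct and takes essentially the same route as the paper: both proofs choose the isogeny $\nu = e_d$ where $d$ is the order of $t$ (using Remark \ref{remarque tore dual d'ordre fini} to get $\ell \nmid d$), both exploit the decomposition of $(e_d)_\ast\fbk_T$ into rank-one summands indexed by characters of $K$, and both use adjunction together with $e_d^\ast\fll^T_t \cong \fbk_T$ to locate $\fll^T_t$ among those summands. The only real difference is presentational: you argue geometrically, treating $e_d$ as a $K$-torsor and using the regular-representation structure of the stalk, and then invoke the rank-one-implies-isomorphism trick to conclude that $\fll^T_t$ is some summand without naming which one; the paper instead computes entirely in the category $\bk[\Cocar(T)]\modu$, identifies $(e_d)_\ast\fbk_T$ with $\bk[\Cocar(T)/e_d^{\#}(\Cocar(T))]$, and pins down the character explicitly as evaluation at $t$ transported along the isomorphism $\Cocar(T)/e_d^{\#}(\Cocar(T))\cong K$. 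Your argument is sufficient for the stated existence claim, but note that the explicit description of $\chi$ obtained in the paper's proof is actually used later (in the proof of Lemma \ref{lemme for ff}), so the extra precision there is not idle.
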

\begin{proof}
 Denote by $n$ the order of $t$ (recall that $t$ is of finite order thanks to remark \ref{remarque tore dual d'ordre fini}, and that this order $n$ is prime to the characteristic $\ell$ of $\bk$). We consider the map $e_n : T \rightarrow T, \qquad x\mapsto x^n$; this is a finite central isogeny with kernel $K_n:= \ker(e_n)$ of order $n^{\rak(T)}$, which is prime to $\ell$. The pushforward of the constant local system along $e_n$ is still a local system, thus we can make all our calculations in the representation-theoretic world; $(e_n)_\ast(\fbk_{T})$ identifies with $\bk[\Cocar(T)]\otimes_{\bk[\Cocar(T)]}\bk$
where the structure of $\bk[\Cocar(T)]$-module on itself in the tensor product is given by $e^\lambda \cdot P = e^{n\lambda}P$.
One easily checks that this tensor product identifies with 
$$\bk[\Cocar(T)/e_n^\#(\Cocar(T))]$$ ($e_n^\# : \Cocar(T) \rightarrow \Cocar(T)$ is the map induced by $e_n$). Since $\Cocar(T)/e_n^\#(\Cocar(T))$ is a finite abelian group of order $n^{\rak(T)}$ (so in particular prime to the characteristic $\ell$ of $\bk$), its category of $\bk$-representations is semisimple and we can decompose $\bk[\Cocar(T)/e_n^\#(\Cocar(T))]$ as a direct sum of simple dimension one representations, each one associated to a character of this quotient. Thus the associated local system decomposes as a direct sum of rank one local systems according to the characters of $\Cocar(T)/e_n^\#(\Cocar(T))$. 

We have a non-zero adjunction morphism   
$\fll^T_{t} \rightarrow (e_n)_\ast (e_n)^\ast \fll^T_{t}.$
Using the diagram in the statement of theorem \ref{theoreme representation systèmes locaux}, it is easy to see that $e_n^\ast(\fll^T_{t}) = \fll^T_{t^n} \cong \fbk_T$. With the preceding discussion, this adjunction morphism has to be the inclusion of a direct summand and this direct summand is  
obviously associated to the character of evaluation at $t$
$$ ev(t) : \Cocar(T)/e_n^\#(\Cocar(T)) \rightarrow \bk^\ast, \quad [\lambda]\mapsto \lambda(t).$$
This character is well defined precisely because $t$ is of order $n$.

One easily checks that evaluation at $e^{\frac{2i\pi}{n}}$ gives a (well-defined) isomorphism 
$\varphi :\Cocar(T)/e_n^\#(\Cocar(T))\xrightarrow{\sim} K_n. $
The morphism $ev(t) \circ \varphi^{-1}$ gives the wished-for character $\chi$ of $K_n$.

\end{proof}

According to lemmas \ref{lemme bonne def LY} and \ref{isotypic component of the pushforward}, for any $T$-variety $X$, we can consider without any ambiguity the category $D^{\mathrm{LY}}_{ T}(X, \bk)_{\fll^T_{t}}$ for any $t \in T^\vee_{\bk}$.

\subsection{Monodromy of local systems on $T$} 

Consider a local system $\fll$ on $T$ and denote by $L$ its associated $\Cocar(T)$-representation. The canonical monodromy action on $\fll$ gives under the equivalence of theorem \ref{theoreme representation systèmes locaux} an action of $\Cocar(T)$ on $L$ defined by a morphism 
$$\varphi_{\small{\fll}} : \Cocar(T)\rightarrow  \End_{\Cocar(T)}(L).$$

\begin{lem} \label{monodromy des sytème locaux}
With the above notations, we have 
$\varphi_{\small{\fll}}(\lambda) = (e^{\lambda}) \cdot,$
the right-hand side being the action of $\lambda$ on $L$ given by the structure of $\Cocar(T)$-module.
\end{lem}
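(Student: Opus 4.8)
The plan is to unwind the definition of the monodromy morphism $\varphi_{\fll}$ from \S\ref{sectiopn def } in the specific case where $X = T$ (with $A = T$ acting on itself by translation), the stratification being the trivial one with a single stratum. First I would note that, since $\fll$ is a local system on the smooth connected space $T$, a morphism $\iota(n) : \pr_2^\ast(\fll) \to a(n)^\ast(\fll)$ as in Proposition~\ref{proposition Verdier 5.1} can be produced very explicitly: indeed, by Lemma~\ref{lemme tiré en arrière local devient constant} there is an $n$ (e.g.\ the lcm of the orders of the monodromy operators) such that $e_n^\ast(\fll)$ is constant, hence $a(n)^\ast(\fll) \cong \pr_2^\ast(\fll)$ already at the level of the underlying local system on $T \times T$, and we may choose the canonical such isomorphism restricting to $\id_{\fll}$ on $\{1\}\times T$. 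The monodromy $\varphi_{\fll}^\lambda$ is then obtained by restricting $\iota(n)$ to $\{\lambda_n\}\times T$ and transporting along $\tau_{\lambda_n}$, where $\lambda_n = \lambda(e^{2i\pi/n})$.

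The key step is to translate all of this through the equivalence $\loc(T,\bk) \cong \bk[\Cocar(T)]\modu$ of Theorem~\ref{theoreme representation systèmes locaux} and Theorem~\ref{theoreme representation systèmes locaux}'s compatibility with pullback. Under this equivalence, $\fll$ corresponds to $L$; the local system $a^\ast(\fll)$ on $T\times T$ corresponds to a representation of $\pi_1(T\times T)\cong \Cocar(T)\times\Cocar(T)$, and using that $\pi_1$ of the multiplication map $T\times T \to T$ is the addition map $\Cocar(T)\times\Cocar(T)\to\Cocar(T)$ together with the multiplicativity-type bookkeeping, one sees that $a^\ast(\fll)$ corresponds to $L$ viewed as a $\Cocar(T)\times\Cocar(T)$-representation where the first factor and the second factor both act through the original $\Cocar(T)$-action (up to the reindexing built into $a(n)$, which replaces the action of $\lambda$ in the first factor by that of $n\lambda$). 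The restriction to $\{\lambda_n\}\times T$, followed by $\tau_{\lambda_n}^\ast$, then corresponds precisely to restricting along the inclusion $\langle \lambda\rangle \hookrightarrow \Cocar(T)$ sending the generator to $\lambda$ — i.e.\ to evaluating the "first-factor" action at $\lambda$. Carrying this through, $\varphi_{\fll}(\lambda)$ becomes exactly the operator "multiplication by $e^\lambda$" on $L$, which is the claim. I would organize this as: (i) recall the explicit $\iota(n)$; (ii) apply the compatibility of Theorem~\ref{theoreme representation systèmes locaux} with $\pr_2^\ast$, $a^\ast$, $(e_n\times\id)^\ast$ and the various restrictions $j_{\lambda_n}^\ast$; (iii) identify the resulting representation-theoretic operator.

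The main obstacle I anticipate is purely bookkeeping: tracking the $n$-th power reindexing in $a(n) = a\circ(e_n\times\id)$ and the point $\lambda_n = \lambda(e^{2i\pi/n})$ through the isomorphism $\Cocar(T)\cong\pi_1(T)$ given by $\mu\mapsto \mu\circ\gamma$, and making sure the "$n$'s cancel" so that the final operator is multiplication by $e^\lambda$ and not by $e^{n\lambda}$ or some $n$-th root. Concretely, the isomorphism $\tau_{\lambda_n}^\ast j_{\lambda_n}^\ast$ picks out the element of $\pi_1(T)$ represented by the loop $\gamma$ scaled into the $\lambda_n$-direction, which under $\mu\mapsto\mu\circ\gamma$ is exactly $\lambda$ (the $1/n$ in $\lambda_n$ and the $n$ in $e_n$ compensating), so that the monodromy of $\iota(n)$ around that loop is the original monodromy operator $e^\lambda\cdot$ on $L$, independent of $n$ — consistent with Proposition~\ref{proposition monodrmie bien définie}. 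Once the indices are pinned down correctly this is immediate; I would present the argument entirely on the representation-theoretic side to keep it clean.
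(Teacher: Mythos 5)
Your overall strategy --- translating everything to $\bk[\Cocar(T)]$-modules via Theorem~\ref{theoreme representation systèmes locaux} and tracing the definition of $\varphi_{\fll}(\lambda)$ through the stalk at $(\lambda_n,1)$ --- matches the paper's. However, the pivotal sentence, that $\tau_{\lambda_n}^\ast j_{\lambda_n}^\ast$ ``corresponds precisely to restricting along the inclusion $\langle\lambda\rangle\hookrightarrow\Cocar(T)$ sending the generator to $\lambda$, i.e.~to evaluating the first-factor action at $\lambda$,'' is not correct as stated, and it is exactly where care is needed. That pullback is along the inclusion $T\cong\{\lambda_n\}\times T\hookrightarrow T\times T$, which on $\pi_1$ is $\mu\mapsto(0,\mu):\Cocar(T)\to\Cocar(T)\times\Cocar(T)$; as a restriction of scalars it forgets the first-factor action entirely, and so could not by itself produce a $\lambda$-dependent operator.

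What actually produces $e^\lambda\cdot$ is the comparison of the stalks of the morphism $\iota$ at $(1,1)$ and at $(\lambda_n,1)$: by hypothesis $\iota_{(1,1)}=\id_L$, and $\varphi_{\fll}(\lambda)$ is, by definition, $\iota_{(\lambda_n,1)}$. These two stalks are related by parallel transport along the path $t\mapsto(\lambda(e^{2i\pi t/n}),1)$. Under $\pr_2$ this path is constant, so the transport of $\pr_2^\ast\fll$ along it is the identity; under $a(n)$ its image is the full loop $\lambda\circ\gamma$ (the $n$'s cancel, exactly the cancellation you flag), which represents $\lambda\in\pi_1(T,1)\cong\Cocar(T)$, so the transport of $a(n)^\ast\fll$ along it is the monodromy operator $e^\lambda\cdot$ on $L$. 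Naturality of transport then gives $\iota_{(\lambda_n,1)}=(e^\lambda\cdot)\circ\iota_{(1,1)}\circ\id^{-1}=e^\lambda\cdot$. This is precisely the content of the commutative square the paper builds from the trivialization of a pulled-back local system on $[0,1]$. Your final paragraph does gesture at it (``the monodromy of $\iota(n)$ around that loop is the original monodromy operator''), so the right idea is present; you should just replace the ``restriction along $\langle\lambda\rangle$'' framing with this explicit transport comparison, since that is where the claimed identity actually gets proved.
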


\begin{proof}
In this proof we denote by $a$ the multiplication map $T \times T \rightarrow T$. 

The monodromy is defined by (the appropriate restriction of) an isomorphism $\iota : \pr_2^\ast\fll \rightarrow a(n)^\ast\fll $ satisfying $\iota _{\mid\{1\}\times T}= \id_{\small{\fll}}$. In view of theorem \ref{theoreme representation systèmes locaux}, this amounts to $\iota_{(1,1)}= \id_{L}$. What we want to determine is $\iota_{\mid(\lambda_n,1)}$ for any $\lambda \in \Cocar(T)$. In the following, we will denote by $\pr_2^\ast L$ and $a(n)^\ast L$ the $\Cocar(T)$-representations associated to $\pr_2^\ast\fll$ and $a(n)^\ast\fll$ respectively. The action of a pair $(\lambda, \mu)$ on $\pr_2^\ast L$ (resp. $a(n)^\ast L$) is given by the action of $\mu$ (resp. $n\lambda + \mu$) on $L$.
 We start by some general considerations. Consider a topological space $X$ satisfying the condition of theorem \ref{theoreme representation systèmes locaux} and a loop $\gamma : [0,1]\rightarrow X$.
The pullback of any local system $\fff$ on $X$ under $\gamma$ gives a local system on $[0,1]$, which is trivial since $[0,1]$ is simply connected. We know that we then have a canonical identification $\Gamma([0,1], \gamma^\ast (\fff)) \cong (\gamma^\ast (\fff))_x$ for any $x\in [0,1]$. Moreover, if $\fhh$ is another local system on $X$ and $f : \fff \rightarrow \fhh$ is an isomorphism, we have a commutative diagram (for any $x,y \in [0,1]$)
{\small\begin{equation}\label{digram} \vcenter{\xymatrix@C=0.3cm@R =0.5cm{
& (\gamma^{\ast}(\fff))_x
\ar[rr]^{(\gamma^\ast(f))_x}_{\sim} && (\gamma^{\ast}(\fhh))_x\\
\Gamma([0,1], \gamma^\ast(\fff))\ar[ru]^{\sim} \ar[rd]_{\sim}&&&& \Gamma([0,1], \gamma^\ast (\fhh))\ar[lu]^{\sim} \ar[ld]_{\sim}\\
&(\gamma^{\ast}(\fff))_y
\ar[rr]^{\sim}_{(\gamma^\ast(f))_y} && (\gamma^{\ast}(\fhh))_y.}}
\end{equation}}
Now, for any $n\geq 1$, let $\gamma_n$ be the path 
$[0,1]\rightarrow \C^\ast, \quad t \mapsto e^{\frac{2i\pi t}{n}}$
and $\gamma^{\lambda}_n$ the loop in $T$ given by $\lambda\circ \gamma_n$.
We consider the diagram \eqref{digram} with $X = T\times T$, $\fff=\pr_2^\ast(\fll)$, $\fhh = a(n)^\ast(\fll) $, the isomorphism $f=\iota$ and $\gamma= \gamma^{\lambda}_n\times \gamma^{0}_1$. Remark that $\gamma^{0}_1$ is just a fancy notation for the constant path at $1 \in T$. Also we choose $x= 0$ and $y=1$. We get
$$\xymatrix{
(\gamma^\ast \pr_2^\ast(\fll) )_0 \ar[rr]^{\id} \ar[d]_{\wr} && (\gamma^\ast a(n)^\ast (\fll))_0 \ar[d]^\wr \\ 
(\gamma^\ast \pr_2^\ast(\fll))_1 \ar[rr]_{\sim}  && (\gamma^\ast a(n)^\ast (\fll))_1.
}
$$
We then remark that $\pr_2 = \pr_2\circ (e_n \times \id_T)$ and that we have $(e_n\times \id_T)\circ \gamma = (\gamma^{\lambda}_1 \times \gamma^0_{1})$.
The left (resp. right) vertical arrow is given (at the level of representations) by the action of $(\gamma^{\lambda}_1 \times \gamma^0_{1})$ on $\pr_2^\ast L$ (resp. $a(n)^\ast L$), which is by definition the action of $ 0$ (resp. $\lambda$) on $L$.  
We can rewrite this diagram in the following form
$$\xymatrix{
(\pr_2^\ast (\fll))_{(1,1)} \ar[rr]^{\id= \iota_{(1,1)}} \ar[d]_{\wr} && (a(n)^\ast (\fll))_{(1,1)} \ar[d]^\wr \\ 
(\pr_2^\ast (\fll))_{(\lambda_n,1)} \ar[rr]^{\varphi_{\small{\fll}}(\lambda)}_{\sim}  && (a(n)^\ast(\fll))_{(\lambda_n,1)}.
}
$$
and we can then conclude the proof. 

\end{proof}

\subsection{Perverse monodromic sheaves} \label{section Perverse monodromic sheaves}
In this section, we give four ``reasonable definitions" of monodromic perverse sheaves, and show that they actually coincide.

Let us fix an isogeny $\nu : \widetilde{T}\rightarrow T$ and a character $\chi$ of $\ker(\nu)$ that allow us to define $D^{\mathrm{LY}}_{T, \str}(X, \bk)_{\small{\fll^T_t}}$ as a full direct summand in $D^b_{\widetilde{T}, \str}(X, \bk)$.
The restriction of the perverse $t$-structure on $D^b_{\Tilde{T}, \str}(X, \bk)$ as defined in \cite[\S 5.1]{BL} to $D^{\mathrm{LY}}_{ T, \str}(X, \bk)_{\fll^T_{t}}$ gives a $t$-structure on this category. (This is true essentially because the category $D^{\mathrm{LY}}_{ \str}(X, \bk)_{\fll^T_{t}}$ is a full direct-summand-subcategory of $D^b_{\Tilde{T}, \str}(X)$.) Note that, by the definition of the perverse $t$-structure on the equivariant bounded category and the remark at the very end of subsection \ref{section fully faithfulness pnu}, the perverse $t$-structure does not depend on the choice of $\nu$ and $\chi$. 
\begin{defn}\label{definition pervers LY} Define the perverse $t$-structure on $D^{\mathrm{LY}}_{\str, T}(X, \bk)_{\fll^T_{t}}$ to be the shift by $r = \dim(T)$ of the usual perverse $t$-structure on $D^b_{\Tilde{T}, \str}(X, \bk)$. This means that $\fff \in D^{\mathrm{LY}}_{ \str}(X, \bk)_{\fll^T_{t}}$ is perverse in this category if and only if $\fff[r]$ is perverse in $D^b_{\Tilde{T}, \str}(X, \bk)$.
The heart is then denoted $\mathrm{P}^{\mathrm{LY}}_{T,\str}(X, \bk)_{\fll^T_{t}}$, and its objects are called the Lusztig--Yun monodromic perverse sheaves. 
\end{defn}

Fix $t \in T^\vee_{\bk}$. Here are the different possibilities for the category of monodromic perverse sheaves with ``monodromy $t$": 
\begin{enumerate}
    \item the heart $\mathrm{P}^{\mathrm{LY}}_{T,\str}(X, \bk)_{\fll^T_{t}}$ of the perverse $t$-structure in the Lusztig--Yun equivariant monodromic category,
    \item the full subcategory $\Perv(X, \bk)_{[\underline{t}]}$ of $\Perv(X, \bk)$ whose objects are those complexes in $D^b_{\str}(X, \bk)_{[\underline{t}]}\cap \Perv(X, \bk)$, 
    \item the category $\Pco(X, \bk)$ whose objects are pairs $(\fff, \vartheta_{\fff})$ with $\fff$ a perverse sheaf in $D^b_{\str}(X, \bk)$ and $\vartheta_{\fff}$ an isomorphism 
    $$ \fll^T_{t}\boxtimes \fff \xrightarrow{\sim} a^\ast(\fff) $$
    satisfying 
    $$\label{cocycle iso1}(m\times \id_X)^\ast(\vartheta_{\fff}) =(\id_T\times a)^\ast(\vartheta_{\fff}) \circ (\fll^T_{t}\boxtimes \vartheta_{\fff}) \quad \mbox{and} \quad (\vartheta_{\fff})_{\mid\{1\}\times X}\cong \id_{\fff}.
$$
and whose morphisms $f : (\fff, \vartheta_{\fff}) \rightarrow (\fgg, \vartheta_{\fgg})$ are given by morphisms $f : \fff \rightarrow \fgg$ in $D^b_{\str}(X, \bk)$ such that
$$\xymatrix{
 \fll^T_{t}\boxtimes \fff \ar[rr]^{\fll^T_{t}\boxtimes f} \ar[d]^\wr_{\vartheta_{\fff}} && \fll^T_{t}\boxtimes \fgg  \ar[d]_\wr^{\vartheta_{\fgg}} \\
a^\ast(\fff) \ar[rr]_{a^\ast(f)} && a^\ast(\fgg)
}
$$ is a commutative diagram, 
    \item the full subcategory $\Piso(X, \bk)$ of perverse sheaves in $D^b_{\str}(X, \bk)$ such that there exists an isomorphism $a^\ast(\fff) \cong \fll^T_{t}\boxtimes \fff$.
    
\end{enumerate}
Our aim is to show that these four definitions agree, in other words, that the four above categories are equivalent. 
The proof of this fact will be given in a succession of lemmas; we will construct several natural functors between these categories. More precisely, we have the following natural functors:
\begin{enumerate}
        \item the functor $\For_{\small{\fll^T_t}} : \mathrm{P}^{\mathrm{LY}}_{T,\str}(X, \bk)_{\fll^T_{t}} \rightarrow \Perv(X, \bk),$
        \item the functor $\For_{\mathrm{co}}: \Pco (X, \bk)\rightarrow \Piso(X, \bk)$ that maps a pair $(\fff, \vartheta_{\fff})$ to $\fff$ and a morphism $f$ to itself,
        \item the functor $\For_{\mathrm{iso}} : \Perv(X, \bk)_{[\underline{t}]}\rightarrow \Perv(X, \bk)$ that maps an object to itself and a morphism to itself.
\end{enumerate}
Let us explicit the definition of $\For_{\fll^T_t}$: this is the composition of the functor $\For : D^{\mathrm{LY}}_{\str, T}(X,\bk)_{\fll^T_t} \rightarrow D^b_{\str}(X, \bk)$ with the shift $[r]$. According to definition \ref{definition pervers LY}, this functor indeed preserve perverse sheaves.
We will show that they induce equivalences between our different monodromic perverse categories. 

 Note that the local system $\fll^T_{t}$ has a stalk at $\{1\}$ equal to $\bk$. 
For $m$ divisible by the order of $t$, we have an isomorphism $\mathrm{can} : e_m^\ast(\fll^T_t) \cong \fbk_T$. In the next proof we will consider $T= \G_m$ and restriction of $\mathrm{can}$ to $e^{\frac{2i\pi}{m}}$. The reasoning of the proof of lemma \ref{monodromy des sytème locaux} allows one to see that we have $\mathrm{can}_{\mid e^{\frac{2i\pi}{m}}}= t^{-1}\cdot \id$.
More generally, for a cocharacter $\lambda \in \Cocar(T)$, one has $\mathrm{can}_{\mid \lambda_m} = \lambda(t)^{-1}\cdot \id$.

\begin{lem} \label{lemme monodromic = iso}
Consider $\fff \in \Perv(X, \bk)$. Then $\fff \in \Perv(X, \bk)_{[\underline{t}]}$ if and only if there exists an isomorphism 
$$a^\ast(\fff) \cong \fll^T_{t}\boxtimes \fff$$
whose restriction to $\{1\}\times X$ identifies with $\id_{\fff}$.
\end{lem}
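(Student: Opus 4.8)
\emph{Strategy.} Set $r=\dim(T)$ and fix an integer $n$ divisible by the order of $t$ and large enough that Proposition~\ref{proposition Verdier 5.1} furnishes an isomorphism $\iota(n)\colon\pr_2^\ast(\fff)\xrightarrow{\sim}a(n)^\ast(\fff)$ restricting to $\id_\fff$ on $\{1\}\times X$. Since the order of $t$ divides $n$ we also have the canonical trivialization $\mathrm{can}\colon e_n^\ast(\fll^T_t)\xrightarrow{\sim}\fbk_T$, with $\mathrm{can}_{\mid 1}=\id$ and $\mathrm{can}_{\mid\lambda_n}=\lambda(t)^{-1}\cdot\id$ for all $\lambda\in\Cocar(T)$. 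The whole argument rests on the observation that pulling back along $e_n\times\id_X$ exchanges the two sheaves at stake: as $e_n$ kills $K_n:=\ker(e_n)$ we have $(e_n\times\id_X)^\ast a^\ast(\fff)=a(n)^\ast(\fff)$ on the nose, and $(e_n\times\id_X)^\ast(\fll^T_t\boxtimes\fff)\xrightarrow{\sim}\fbk_T\boxtimes\fff=\pr_2^\ast(\fff)$ via $\mathrm{can}\boxtimes\id_\fff$. Thus a normalized isomorphism $a^\ast(\fff)\cong\fll^T_t\boxtimes\fff$ produces — and, through a descent argument, is produced by — a Verdier isomorphism as above.

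\emph{Proof of ``if''.} Let $\psi\colon a^\ast(\fff)\xrightarrow{\sim}\fll^T_t\boxtimes\fff$ restrict to $\id_\fff$ over $\{1\}\times X$. Put $\Phi:=(\mathrm{can}\boxtimes\id_\fff)\circ(e_n\times\id_X)^\ast(\psi)\colon a(n)^\ast(\fff)\xrightarrow{\sim}\pr_2^\ast(\fff)$. Since $e_n(1)=1$ and $\mathrm{can}_{\mid 1}=\id$, the inverse $\Phi^{-1}$ restricts to $\id_\fff$ over $\{1\}\times X$, so by Proposition~\ref{proposition monodrmie bien définie} it may be used to compute the monodromy of $\fff$. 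Restricting $\Phi$ to $\{\lambda_n\}\times X$, which $e_n\times\id_X$ sends to $\{1\}\times X$ (as $\lambda_n^n=1$), and invoking $\mathrm{can}_{\mid\lambda_n}=\lambda(t)^{-1}\id$, one gets $\tau_{\lambda_n}^\ast j_{\lambda_n}^\ast(\Phi)=\lambda(t)^{-1}\cdot\id_\fff$, hence $\varphi_\fff^\lambda=\lambda(t)\cdot\id_\fff$ for every $\lambda$. Therefore $\varphi_\fff$ factors through $\bk[\Cocar(T)]/\langle e^\lambda-\lambda(t)\rangle$ and $\fff\in\Perv(X,\bk)_{[\underline{t}]}$. (Perversity of $\fff$ plays no role in this implication.)

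\emph{Proof of ``only if''.} Assume $\varphi_\fff^\lambda=\lambda(t)\cdot\id_\fff$ for all $\lambda$, and set $\Psi:=(\mathrm{can}\boxtimes\id_\fff)^{-1}\circ\iota(n)^{-1}\colon a(n)^\ast(\fff)\xrightarrow{\sim}(e_n\times\id_X)^\ast(\fll^T_t\boxtimes\fff)$, which restricts to $\id_\fff$ over $\{1\}\times X$. It suffices to show that $\Psi$ descends along the finite covering $e_n\times\id_X$ (Galois with group $K_n$, whose order is prime to $\ell$) to an isomorphism $\psi\colon a^\ast(\fff)\xrightarrow{\sim}\fll^T_t\boxtimes\fff$, for then $\psi_{\mid\{1\}\times X}=\Psi_{\mid\{1\}\times X}=\id_\fff$ and we are done. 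Now $a^\ast(\fff)[r]$ and $(\fll^T_t\boxtimes\fff)[r]$ are perverse on $T\times X$ for the stratification $\{T\times S\}_{S\in\str}$ (because $a$ is smooth of relative dimension $r$ and $\pr_1^\ast(\fll^T_t)$ is a rank-one local system), and perverse sheaves form a stack for this topology \cite[\S 2.2.19]{BBD}; hence $\Psi$ descends once it satisfies the cocycle condition, namely $T_\zeta^\ast(\Psi)=\Psi$ for every $\zeta\in K_n$, where $T_\zeta\colon(s,x)\mapsto(\zeta s,x)$ and the two sides are compared via the tautological identifications $T_\zeta^\ast a(n)^\ast(\fff)=a(n)^\ast(\fff)$, $T_\zeta^\ast(e_n\times\id_X)^\ast(\fll^T_t\boxtimes\fff)=(e_n\times\id_X)^\ast(\fll^T_t\boxtimes\fff)$ coming from $a(n)\circ T_\zeta=a(n)$ and $(e_n\times\id_X)\circ T_\zeta=e_n\times\id_X$. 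Writing $\zeta=\mu_n$ for $\mu\in\Cocar(T)$, this reduces to two computations: $T_\zeta^\ast(\mathrm{can})=\mu(t)^{-1}\cdot\mathrm{can}$ (both sides are morphisms of rank-one local systems on the connected $T$ and agree at the stalk over $1$, by $\mathrm{can}_{\mid\mu_n}=\mu(t)^{-1}\id$); and $T_\zeta^\ast(\iota(n))=\mu(t)\cdot\iota(n)$, which follows by writing the endomorphism $\iota(n)^{-1}\circ T_\zeta^\ast(\iota(n))$ of $\pr_2^\ast(\fff)$ as $\pr_2^\ast(\beta)$ via Lemma~\ref{lemme isomorphisme de Ind sur les Hom} (after enlarging $n$ if needed) and identifying $\beta=\varphi_\fff^\mu=\mu(t)\id_\fff$ by restriction to $\{1\}\times X$. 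Plugging these into the definition of $\Psi$ yields $T_\zeta^\ast(\Psi)=\mu(t)\cdot(\mathrm{can}\boxtimes\id_\fff)^{-1}\circ\mu(t)^{-1}\cdot\iota(n)^{-1}=\Psi$, as required.

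\emph{Main obstacle.} The crux is this descent step in the ``only if'' direction: it is exactly where the perversity hypothesis on $\fff$ is used (the full constructible derived category is not a stack, while its perverse heart is), and it succeeds only because the two twists $T_\zeta^\ast(\mathrm{can})=\mu(t)^{-1}\mathrm{can}$ and $T_\zeta^\ast(\iota(n))=\mu(t)\iota(n)$ cancel. A routine bookkeeping nuisance — the identity $T_\zeta^\ast(\iota(n))=\mu(t)\iota(n)$ may only become valid after a further pullback along some $e_m\times\id_X$ — is absorbed by the ``replace $n$ by a multiple'' mechanism already present in Proposition~\ref{proposition Verdier 5.1} and Lemma~\ref{lemme isomorphisme de Ind sur les Hom}, and does not affect the conclusion since the monodromy of $\fff$ is independent of these choices.
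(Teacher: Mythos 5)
Your proof is correct, but the ``only if'' direction follows a genuinely different route from the paper's. The paper proves the existence of the isomorphism $a^\ast(\fff)\cong\fll^T_t\boxtimes\fff$ by first reducing to $T=\mathbb{C}^\ast$, constructing the isomorphism explicitly on the two Zariski-open subsets $U_1=\mathbb{C}^\ast\setminus\mathbb{R}_{\geq0}$ and $U_2=\mathbb{C}^\ast\setminus\mathbb{R}_{\leq0}$ by transporting $\iota(m)$ along the homeomorphisms $O_i\xrightarrow{\sim}U_i$ induced by $e_m$, checking agreement on the two components of $U_1\cap U_2$, gluing via \cite[Corollaire 2.1.23]{BBD}, and then handling general $T\cong(\mathbb{C}^\ast)^r$ by applying this rank-one argument coordinate by coordinate. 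You instead descend $\Psi=(\mathrm{can}\boxtimes\id)^{-1}\circ\iota(n)^{-1}$ directly along the finite Galois covering $e_n\times\id_X$ with group $K_n$, reducing the problem to the $K_n$-invariance $T_\zeta^\ast(\Psi)=\Psi$, which you verify by a clean cancellation of the two twists $T_\zeta^\ast(\mathrm{can})=\mu(t)^{-1}\mathrm{can}$ and $T_\zeta^\ast(\iota(n))=\mu(t)\iota(n)$. This is more conceptual, avoids the reduction to rank one entirely, and makes transparent that the single input from perversity is the stack property (plus the full faithfulness of $\pr_2^\ast$ on perverse objects). Both proofs use perversity for the same underlying reason. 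One small simplification you could make: the appeal to Lemma~\ref{lemme isomorphisme de Ind sur les Hom} and the attendant ``enlarge $n$'' caveat in establishing $T_\zeta^\ast(\iota(n))=\mu(t)\cdot\iota(n)$ are unnecessary. Since $\fff$ is perverse, $\pr_2^\ast[r]$ is fully faithful on perverse sheaves (\cite[Proposition 4.2.5]{BBD}), so $\iota(n)^{-1}\circ T_\zeta^\ast(\iota(n))\in\End(\pr_2^\ast\fff)$ is already of the form $\pr_2^\ast(\beta)$ for a unique $\beta\in\End(\fff)$, with no passage to the inductive limit and no need to pass to a further multiple of $n$; restriction to $\{1\}\times X$ then gives $\beta=\varphi_\fff^\mu=\mu(t)\id$ directly. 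This removes the bookkeeping you flag as the ``routine nuisance,'' since the Galois group stays $K_n$ throughout.
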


\begin{proof}
To show the ``only if" part, we assume that we have an object $\fff$ in $\Perv(X, \bk)_{[\underline{t}]}$ and we will construct locally an isomorphism as in the statement of the lemma (i.e. on an open covering). Now $a^\ast(\fff)$ is a shifted perverse sheaf (since $a$ is smooth with connected fibers, cf. \cite[Proposition 4.2.5]{BBD}) and $\fll^T_t \boxtimes \fff$ as well (cf. \cite[Proposition 4.2.8]{BBD}), so according to \cite[Corollaire 2.1.23]{BBD}, we will obtain a global isomorphism.

Consider the case $T= \C^\ast$.
For $\fff\in \Perv(X, \bk)_{[\underline{t}]}$, there exist an integer $m\in \Z_{>0}$ such that we have an isomorphism $\iota : \pr_2^\ast(\fff)\xrightarrow{\sim} a(m)^\ast(\fff)$ satisfying $\iota_{\mid\{1\}\times X}= \id_{\fff}$.
We can assume that $m$ is divisible by the order of $t$, so that we have the isomorphism $\mathrm{can} : \fbk \xrightarrow{\sim}e_m^\ast(\fll^T_{t})$. Thus we obtain an isomorphism 
$$\Hom( e_m^\ast(\fll^T_{t})\boxtimes\fff,a(m)^\ast(\fff)) \xrightarrow{\sim} \Hom(\pr_2^\ast(\fff), a(m)^\ast(\fff)), \quad f \mapsto  f\circ (\mathrm{can}\boxtimes \id).$$
Now since $\fff$ is perverse, the isomorphism $\iota$ is unique. More precisely, for a given $n$, if there exists an isomorphism $\pr_2^\ast(\fff)\xrightarrow{\sim} a(n)^\ast(\fff) $ whose restriction to $\{1\}\times X$ is the identity, then this isomorphism is unique (this is essentially because the functor $\pr_2^\ast$ is fully faithful on perverse sheaves, see \cite[Proposition 4.2.5]{BBD}). This implies that there exists a unique isomorphism $I : = \iota \circ (\mathrm{can}\boxtimes \id) $ in  $\Hom(e_m^\ast(\fll^T_{t})\boxtimes\fff,a(m)^\ast(\fff) )$ whose restriction to $\{1\}\times X$ identifies with the identity of $\fff$ (note that by definition the restriction of $\mathrm{can}$ to $\{1\}\subseteq T$ is $\id$). Consider now the automorphism of $\C^\ast\times X $ given by 
$$\xi : (z,x) \longmapsto (ze^{\frac{2i\pi}{m}},x).$$
We clearly have $(e_m \times \id_X) \circ \xi = (e_m \times X)$ and similarly, $a(m) \circ \xi = a(m)$.
The isomorphism $\xi^\ast(I) : e_m^\ast(\fll^T_{t})\boxtimes\fff \xrightarrow{\sim} a(m)^\ast(\fff) $ satisfies 
\begin{align*}
    \xi^\ast(I)_{\mid \{1\}\times \id_X} & = (\iota\circ (\mathrm{can}\boxtimes \id))_{\mid \{e^{\frac{2i\pi}{m}}\}\times X} \\
    & =  (t\cdot\id_{\fff})\circ (\mathrm{can}_{e^{\frac{2i\pi}{m}}}\boxtimes \id)  \\
    & = (t\cdot \id_{\fff}) \circ (t^{-1}\cdot \id_{\fff})\\
    & = \id_{\fff}.
\end{align*}
(Note that we forgot the canonical isomorphism $e_m^\ast(\fll^T_{t})\boxtimes \fff\xrightarrow{\sim}\xi^\ast((e_m^\ast(\fll^T_{t})\boxtimes \fff)$ in the computation above, because its restriction to $\{1\}\times X$ is the identity; similarly for the identification $\xi^\ast a(m)^\ast \fff \cong a(m)^\ast \fff$.)
Thus we deduce that $\xi^\ast(I)$ coincides with $I=\iota\circ (\mathrm{can}\boxtimes \id)$. 

We will need a bit more of notation. Set 
$$O_1 = \{ z \in \C^\ast \mid z = r e^{i\theta}, \;\; r \in \R_{>0}, \; \theta \in (0,\frac{2\pi}{m})\}$$
and 
$$O_2 = \{ z \in \C^\ast \mid z = r e^{i\theta}, \;\; r \in \R_{>0}, \; \theta \in (-\frac{\pi}{m},\frac{\pi}{m})\}.$$
Then let $U_1 = \C^\ast \setminus \R_{\geq 0}$ and $U_2 = \C^\ast \setminus \R_{\leq 0}$. The map $e_m$ induces homeomorphisms 
$$O_1 \xrightarrow{\sim} U_1 \quad \mbox{and}\quad O_2 \xrightarrow{\sim} U_2.$$
We thus obtain isomorphisms 
{\small\begin{align*}
    \Hom(\pr_2^\ast(\fff)_{\mid O_i \times X}, a(m)^\ast (\fff)_{\mid O_i \times X}) & \cong  \Hom((e_m^\ast(\fll^T_{t})\boxtimes \fff)_{\mid O_i \times X}, a(m)^\ast (\fff)_{\mid O_i \times X}) \\
    &\cong \Hom( (\fll^T_{t}\boxtimes \fff)_{\mid U_i \times X}, a^\ast (\fff)_{\mid U_i \times X}).
    \end{align*}}
The first isomorphism is given by $(-)\circ (\mathrm{can}\boxtimes \id)_{\mid O_i \times X}$ and the second one by $\left(((e_m\times \id)_{\mid O_i\times X})^\ast\right)^{-1}$. 
    We denote by $I_i$ the image of $\iota_{\mid O_i\times X}$ obtained following these isomorphisms. By construction the restriction of the isomorphisms $I_1$ and $I_2$ to 
    $\{z\in \C^\ast \mid \Im(z) > 0\}$
    coincide. (Note that this is a connected component of $U_1 \cap U_2$.)
    For the other component 
    $$\{z\in \C^\ast \mid \Im(z) < 0\}$$
    we use the fact that $\xi^\ast(I)$ coincides with $I$ to see that there again the restriction of $I_1$ and $I_2$ coincide. We thus have ``constructed locally", i.e. on an open cover of $\C^\ast\times X$, an isomorphism between $a^\ast(\fff)$ and $\fll^T_{t}\boxtimes \fff$. Moreover these isomorphisms coincide on the intersection of the open subsets. We can then glue these ``local isomorphisms" to obtain a global isomorphism 
   \begin{equation} \label{isomorphismp tequiv}  \fll^T_{t}\boxtimes \fff \xrightarrow{\sim} a^\ast(\fff) .
   \end{equation}
    To determine the restriction of this isomorphism to $\{1\}\times X$, one can first restrict to $U_2\times X$ (since $1 \in U_2$). One then considers the pullback of \eqref{isomorphismp tequiv} along the homeomorphism $O_2\times X\cong U_2 \times X$. We obtain in this way the restriction of $\iota$ to $O_2\times X$. It is then easy to see that the restriction of \eqref{isomorphismp tequiv}
 to $\{1\}\times X$ gives the identity of $\fff$.
 
 To deal with the general case, one chooses a trivialization $T\cong (\C^\ast)^r$ and uses $\fbk_T \cong \fbk_{\C^\ast}\boxtimes \cdots \boxtimes \fbk_{\C^\ast}$ and $\fll^T_{t} \cong \fll^{\C^\ast}_{ t_1}\boxtimes \cdots \boxtimes \fll^{\C^\ast}_{ t_r}$ (for $t = (t_1, \ldots, t_r)$ under the induced trivialization $T^\vee_{\bk}\cong (\bk^\ast)^r$). We write $a_i : (\C^\ast)^i \times X\rightarrow(\C^\ast)^{i-1} \times X$ for $i= 1, \ldots, r$ with $a_i(t_1, \ldots, t_i, x)= (t_1, \ldots, t_{i-1}, a((1, \ldots, 1, \underset{i}{t_i}, 1, \ldots, 1), x))$, so that $a((t_1, \ldots, t_r), x) = a_1 \circ\cdots\circ a_r (t_1, \ldots, t_r, x)$. Applying the above reasoning successively to the $a_i$'s, we can conclude.
    
    Now we prove the converse. Assume that we have an isomorphism $a^\ast(\fff)\cong \fll^T_{t}\boxtimes \fff$ whose restriction to $\{1\}\times X$ is the identity of $\fff$ (under the identification $(\fll^T_{t})_1 \cong \bk$). Then, for $n$ the order of $t$, we obtain an isomorphism 
    $$\iota : e_n^\ast(\fll^T_{t})\boxtimes \fff \xrightarrow{\sim} a(n)^\ast(\fff) .$$
    We again have a canonical isomorphism 
    $  \mathrm{can}^{-1} : \fbk \xrightarrow{\sim}e_n^\ast(\fll^T_{t}).$ We can consider 
    $$\Tilde{\iota} : \fbk\boxtimes \fff \xrightarrow{\mathrm{can}^{-1 }\boxtimes \id_{\fff}} e_n^\ast(\fll^T_{t})\boxtimes \fff \xrightarrow{\iota} a(n)^\ast(\fff);$$
    the restriction of this isomorphism to $\{1\}\times X$ identifies with the identity of $\fff$. Thus the monodromy of $\fff$ is given by the restriction of $\Tilde{\iota}$ to various $\lambda_n = \lambda(e^{\frac{2i\pi}{n}})$ for $\lambda \in \Cocar(T)$. From the argument above, one sees that this restriction coincides with the restriction of $\mathrm{can}^{-1 }$ to $\{\lambda_n\}\times X$, which identifies with $\lambda(t)\cdot \id_{\fff}.$ This concludes the proof.
\end{proof}

In the following lemma we fix an isogeny $\widetilde{T}\xrightarrow{\nu} T$ in order to realize $D^{\mathrm{LY}}_{T, \str}(X, \bk)_{\small{\fll^T_t}}$ as a full subcategory of $D^b_{\widetilde{T}, \str}(X, \bk)$. Namely, we choose the map $e_n : T \rightarrow T, \; x \mapsto x^n$, for $n$ prime to $\ell$ and divisible by the order of $t$. Recall that we have a forgetful functor $\For: D^{\mathrm{LY}}_{T, \str}(X, \bk)_{\small{\fll^T_t}}\rightarrow D^b_{\str}(X, \bk)$, as introduced at the end of subsection \ref{section fully faithfulness pnu}. 

\begin{lem} \label{lemme for ff}
The restriction $\For_{\fll^T_{t}}$ of the functor $\For$ to $\mathrm{P}^{\mathrm{LY}}_{T,\str}(X, \bk)_{\fll^T_{t}}$ is fully faithful and has essential image contained in $\Perv(X, \bk)_{[\underline{t}]}$.
\end{lem}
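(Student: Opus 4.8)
The proof has two parts, handled separately: full faithfulness of $\For_{\fll^T_t}$, and the inclusion of its essential image in $\Perv(X,\bk)_{[\underline t]}$.

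\emph{Full faithfulness.} The plan is to reduce to a statement about the plain forgetful functor. Since $\mathrm{P}^{\mathrm{LY}}_{T,\str}(X,\bk)_{\fll^T_t}$ is a full subcategory of $D^{\mathrm{LY}}_{T,\str}(X,\bk)_{\fll^T_t}$, and the latter (being the $\chi$-isotypic direct summand) is a full subcategory of $D^b_{\widetilde T,\str}(X,\bk)$, we have $\Hom_{\mathrm{P}^{\mathrm{LY}}_{T,\str}(X,\bk)_{\fll^T_t}}(\fff,\fgg)=\Hom_{D^b_{\widetilde T,\str}(X,\bk)}(\fff[r],\fgg[r])$; likewise, $\Perv(X,\bk)$ being a full subcategory of $D^b_\str(X,\bk)$ and $\For_{\fll^T_t}=\For\circ[r]$ sending $\fff$ to the perverse sheaf $\For(\fff[r])$ (Definition \ref{definition pervers LY}), we have $\Hom_{\Perv(X,\bk)}(\For_{\fll^T_t}\fff,\For_{\fll^T_t}\fgg)=\Hom_{D^b_\str(X,\bk)}(\For(\fff[r]),\For(\fgg[r]))$. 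So it suffices to show that $\For\colon D^b_{\widetilde T,\str}(X,\bk)\to D^b_\str(X,\bk)$ induces an isomorphism on $\Hom$-spaces between perverse objects. For this I would rerun the argument of Lemma \ref{lemme bonne def LY}: for perverse $\mathscr{P},\mathscr{Q}$, Lemma \ref{lemme spectral sequence} applied to $R\sHom(\mathscr{P},\mathscr{Q})$ (legitimate since $\widetilde T$ is connected) gives a convergent spectral sequence
$$E_2^{p,q}=\mathbf{H}^p_{\widetilde T}(\pt,\bk)\otimes_{\bk}\Hom_{D^b_\str(X,\bk)}(\mathscr{P},\mathscr{Q}[q])\ \Rightarrow\ \Hom_{D^b_{\widetilde T,\str}(X,\bk)}(\mathscr{P},\mathscr{Q}[p+q])$$
whose degree-$0$ edge map is the one induced by $\For$ (exactly as in the proof of Lemma \ref{lemme bonne def LY}). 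Now $\mathbf{H}^p_{\widetilde T}(\pt,\bk)=0$ for $p<0$ and $\mathbf{H}^0_{\widetilde T}(\pt,\bk)=\bk$, while $\Hom_{D^b_\str(X,\bk)}(\mathscr{P},\mathscr{Q}[q])=0$ for $q<0$ since $\mathscr{P},\mathscr{Q}$ are perverse; hence $E_2^{0,0}=\Hom_{D^b_\str(X,\bk)}(\mathscr{P},\mathscr{Q})$ is the only term in total degree $0$, and no differential enters or leaves it, so the edge map is an isomorphism.

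\emph{Essential image.} Fix $\fff\in\mathrm{P}^{\mathrm{LY}}_{T,\str}(X,\bk)_{\fll^T_t}$ and set $\mathscr{P}:=\For_{\fll^T_t}(\fff)=\For(\fff[r])$, perverse by Definition \ref{definition pervers LY}; by Lemma \ref{lemme monodromic = iso} it suffices to check that the canonical monodromy morphism $\varphi_{\mathscr{P}}$ sends $e^\lambda$ to $\lambda(t)\cdot\id_{\mathscr{P}}$ for every $\lambda\in\Cocar(T)$. The $\widetilde T$-equivariant structure on $\fff[r]$ — where $\widetilde T$ acts on $X$ through $\nu=e_n$, so that the action map is $a(n)$ — provides, via \eqref{equat2}, an isomorphism $\vartheta\colon\pr_2^\ast\mathscr{P}\xrightarrow{\sim}a(n)^\ast\mathscr{P}$ whose restriction to $\{1\}\times X$ is $\id_{\mathscr{P}}$ (cocycle condition of Lemma \ref{lemme A agit sur Id}). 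Thus $\vartheta$ is an admissible choice of the morphism $\iota(n)$ of Proposition \ref{proposition Verdier 5.1}, and by Proposition \ref{proposition monodrmie bien définie} and Definition \ref{definition monodromy} we may compute $\varphi^\lambda_{\mathscr{P}}=\tau^\ast_{\lambda_n}j^\ast_{\lambda_n}(\vartheta)$. Since $\lambda_n=\lambda(e^{2i\pi/n})$ lies in $K_n=\ker(e_n)$ and $\fff[r]$ lies in $D^b_{\widetilde T,\str,\chi}(X,\bk)$, this restriction equals $\chi(\lambda_n)\cdot\id_{\mathscr{P}}$; and by the explicit description of $\chi$ in the proof of Lemma \ref{isotypic component of the pushforward} — it is $ev(t)$ composed with the inverse of the isomorphism $\Cocar(T)/e_n^\#(\Cocar(T))\xrightarrow{\sim}K_n$, $[\lambda]\mapsto\lambda_n$ — we get $\chi(\lambda_n)=\lambda(t)$. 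Hence $\varphi_{\mathscr{P}}$ factors through $\bk[\Cocar(T)]/\langle e^\lambda-\lambda(t)\mid\lambda\in\Cocar(T)\rangle$, so $\mathscr{P}\in D^b_\str(X,\bk)_{[\underline t]}$, and being perverse it lies in $\Perv(X,\bk)_{[\underline t]}$.

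The step needing the most care is the monodromy computation above: one must be sure that the equivariant-structure isomorphism $\vartheta$ genuinely is one of Verdier's isomorphisms $\iota(n)$ — this rests on the normalization $\vartheta_{\mid\{1\}\times X}=\id$ coming from Lemma \ref{lemme A agit sur Id} — and then keep consistent track of the three identifications involved (the shift by $r=\dim T$, the isogeny $\nu=e_n$ with its action map $a(n)$, and the isomorphism $\Cocar(T)/e_n^\#(\Cocar(T))\xrightarrow{\sim}K_n$ used to define $\chi$), so that $\chi(\lambda_n)$ comes out equal to $\lambda(t)$ and not to its inverse. For full faithfulness the only mild point is to check that Lemma \ref{lemme spectral sequence} and the identification of its degree-$0$ edge map with $\For$ go through in the $\str$-constructible setting, which is exactly as in the proof of Lemma \ref{lemme bonne def LY}.
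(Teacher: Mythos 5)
Your proof is correct, and the essential-image half coincides with the paper's argument essentially line by line: both identify the Verdier isomorphism $\iota(n)$ with the equivariant-structure isomorphism $\vartheta$, use $\varphi^\lambda_{\mathscr{P}}=\vartheta_{\mathscr{P},\lambda_n}$, and then unwind the character $\chi$ via the explicit isomorphism $\Cocar(T)/e_n^{\#}(\Cocar(T))\cong K_n$ from Lemma~\ref{isotypic component of the pushforward} to conclude $\varphi^\lambda_{\mathscr{P}}=\lambda(t)\,\id_{\mathscr{P}}$. The full-faithfulness half, however, takes a genuinely different route. The paper simply invokes \cite[Proposition~A.2]{BaR}, the known statement that the forgetful functor identifies equivariant perverse sheaves with the full subcategory of perverse sheaves admitting an equivariance isomorphism, and then remarks that $\For_{\fll^T_t}$ is the restriction of this fully faithful functor to a direct-summand subcategory. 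You instead re-derive full faithfulness on perverse objects internally, by applying the spectral sequence of Lemma~\ref{lemme spectral sequence} to $R\sHom(\mathscr{P},\mathscr{Q})$ and using the first-quadrant concentration coming from perversity ($\Hom(\mathscr{P},\mathscr{Q}[q])=0$ for $q<0$) together with $\mathbf{H}^{p}_{\widetilde T}(\pt,\bk)=0$ for $p<0$ and $=\bk$ for $p=0$ to isolate $E_2^{0,0}$ with no nontrivial differentials. That is a valid and more self-contained argument — it only uses machinery already developed in the paper (Lemma~\ref{lemme spectral sequence} and the identification of the degree-zero edge map with $\For$, as in the proof of Lemma~\ref{lemme bonne def LY}) — at the modest cost of re-proving, in the special case of perverse objects, something the paper prefers to cite. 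One small point worth making explicit if you keep your version: you should say that you take $\nu=e_n$ with $n$ prime to $\ell$ and divisible by the order of $t$, as the paper does just before Lemma~\ref{lemme for ff}, since both the semisimplicity of $\bk[K_n]$ and the formula $\chi(\lambda_n)=\lambda(t)$ depend on it.
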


\begin{proof}
The equivariant derived category is defined with respect to the action $t\cdot x = t^n x$ with $t \in T$ and $x\in X$. This action morphism $T\times X \rightarrow X$ is denoted $a_X(n) = a_X \circ (e_n \times \id_X)$. We will also consider the projection $\pr_X : T \times X \rightarrow X$.

It is well know (see e.g. \cite[Proposition A.2]{BaR}) that the forgetful functor $ \For : D^b_{\Tilde{T}, \str}(X, \bk) \rightarrow D^b_{\str}(X, \bk)$ induces an equivalence between the category of equivariant perverse sheaves and the full subcategory of constructible perverse sheaves whose objects are those $\fff$ such that there exists an isomorphism 
$$ \pr_X^\ast(\fff)\xrightarrow{\sim}
a_X(n)^\ast(\fff).$$
The functor $\For_{\fll^T_{t}}$ is the restriction to a direct summand subcategory of a fully faithful functor, we thus know that it is fully faithful. We need to show that its essential image is contained in the category $P_{\str}(X, \bk)_{[\underline{t}]}$. We will show that the restriction of $\For$ to $D^{\mathrm{LY}}_{T, \str}(X, \bk)_{\small{\fll^T_t}}$ has essential image in $D^b_{\str}(X, \bk)_{[\underline{t}]}$. As $\For$ maps perverse objects to perverse objects, this fact implies the lemma.

Consider $\fff\in D^{\mathrm{LY}}_{T,\str}(X, \bk)_{\fll^T_{t}}$. As this is a $\widetilde{T}$-equivariant sheaf, we have a canonical isomorphism (in the constructible category)
$$\vartheta_{\fff} : \pr_X^\ast(\For(\fff)) \xrightarrow{\sim} a_X(n)^\ast(\For(\fff)) $$
and this isomorphism satisfies the usual cocycle condition. We have two uses of this isomorphism: on the one hand, the condition $(\vartheta_{\fff}) _{\mid \{1\}\times X} = \id_{\fff}$ allows us to define the canonical monodromy morphism of $\fff$ from $\vartheta_{\fff}$: using the notations of subsection \ref{sectiopn def }, we have $\varphi_{\fff}^{\lambda} = \vartheta_{\fff, \lambda_n}$. 

On the other hand, the kernel $K = \ker(e_n)$ acts on $\fff$ via the automorphisms $\vartheta_{\fff, a}$ for $a\in K$ (see the proof of lemma \ref{lemme A agit sur Id}). 
One should remark that for any $\lambda \in \Cocar(T)$, we have $\lambda_n \in K$. Thus we have a commutative diagram 
$$\xymatrix{
\Cocar(T)\ar[rd]_{\varphi_{\fff}}  \ar[rr]^{ev(e^{\frac{2i\pi}{n}})} && K \ar[ld]^{\vartheta_{\fff, \cdot}}\\
& \Aut(\fff). &}
$$

By definition of the category $D^{\mathrm{LY}}_{T,\str}(X, \bk)_{\fll^T_{t}}$ the map $\vartheta_{\fff, (\cdot)}$ factors through the character $\chi_{\small{\fll^T_t}} : K \rightarrow \bk^\ast$, and it is clear that the horizontal map factors through the canonical projection $\Cocar(T) \twoheadrightarrow \Cocar(T)/e_n^\#(\Cocar(T))$. Using the proof of lemma \ref{isotypic component of the pushforward}, we get that the composition 
$$\Cocar(T) \twoheadrightarrow \Cocar(T)/e_n^\#(\Cocar(T))\rightarrow K \rightarrow \bk^\ast$$
coincides with the evaluation at $t\in T^\vee_{\bk}$. This tells us that the canonical morphism of monodromy of $\fff$ is given by
$$\varphi_{\fff}^\lambda = \lambda(t) \id_{\fff}$$
for any $\lambda \in \Cocar(T)$. In others words, we have $\For_{\fll^T_{t}}(\fff) \in D^b_{\str}(X, \bk)_{[\underline{t}]}$. 
\end{proof}

\begin{cor} \label{corollary noyau monodromy}
We keep the setting and notations of the proof of lemma \ref{lemme for ff}. For any $\fff\in D^b_{\Tilde{T}, \str}(X, \bk)$ and any $\lambda \in \Cocar(T)$, we have
$\vartheta_{\fff, \lambda_n} = \varphi_{\For(\fff)}^{\lambda}.$
\end{cor}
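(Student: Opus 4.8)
The plan is to observe that the desired identity is exactly the computation already carried out inside the proof of Lemma~\ref{lemme for ff}, and that that computation never used anything about $\fff$ beyond its being $\widetilde{T}$-equivariant; so I would simply rerun the same argument for an arbitrary $\fff\in D^b_{\widetilde{T},\str}(X,\bk)$.

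First I would recall that, since the $\widetilde{T}$-action on $X$ is $t\cdot x = t^n x$, its action morphism is precisely the map $a(n) = a\circ(e_n\times\id_X)$ of \S\ref{section A result of Verdier}, so the equivariant structure of $\fff$ provides, via the construction in the proof of Lemma~\ref{lemme A agit sur Id}, an isomorphism $\vartheta_\fff:\pr_2^\ast(\For(\fff))\xrightarrow{\sim}a(n)^\ast(\For(\fff))$ satisfying the cocycle condition. Restricting that cocycle identity to $\{(1,1)\}\times X$ gives $\vartheta_{\fff,1} = \vartheta_{\fff,1}\circ\vartheta_{\fff,1}$, whence $\vartheta_{\fff,1} = \id_{\For(\fff)}$ since it is an automorphism; thus $\vartheta_\fff$ is a valid choice for the morphism $\iota(n)$ of Proposition~\ref{proposition Verdier 5.1}.

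Next, by the independence statement of Proposition~\ref{proposition monodrmie bien définie}, the monodromy automorphism $\varphi^\lambda_{\For(\fff)}$ can be computed from any admissible $\iota(n)$; choosing $\iota(n)=\vartheta_\fff$, the definition in \S\ref{sectiopn def } reads $\varphi^\lambda_{\For(\fff)} = \tau_{\lambda_n}^\ast j_{\lambda_n}^\ast(\vartheta_\fff)$. Finally I would observe that $\lambda_n = \lambda(e^{2i\pi/n})$ lies in $K=\ker(e_n)$, as $\lambda_n^n = \lambda(e^{2i\pi}) = 1$, and that by the definition of the $K$-action on the identity functor (proof of Lemma~\ref{lemme A agit sur Id}) the automorphism $\vartheta_{\fff,\lambda_n}$ is precisely the pullback of $\vartheta_\fff$ along $X\xrightarrow{\sim}\{\lambda_n\}\times X\hookrightarrow T\times X$, that is, $\tau_{\lambda_n}^\ast j_{\lambda_n}^\ast(\vartheta_\fff)$. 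Comparing the two expressions yields $\varphi^\lambda_{\For(\fff)} = \vartheta_{\fff,\lambda_n}$.

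I do not expect a genuine difficulty here: the whole content is bookkeeping, namely matching Verdier's topological monodromy construction with the Bernstein--Lunts equivariant formalism --- checking that the action map defining $D^b_{\widetilde{T},\str}(X,\bk)$ is literally $a(n)$, that the normalization $\iota_{\mid\{1\}\times X} = \id$ follows from the cocycle condition, and keeping the two notational systems ($\pr_X$ versus $\pr_2$, $a_X(n)$ versus $a(n)$) aligned.
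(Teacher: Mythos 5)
Your proposal is correct and follows the same route as the paper, which treats this corollary as an immediate extraction of the two observations already made inside the proof of Lemma~\ref{lemme for ff} (that $\vartheta_\fff$ restricted to $\{1\}\times X$ is the identity, hence is a valid choice of $\iota(n)$, and that $\lambda_n\in K$, so $\vartheta_{\fff,\lambda_n}$ is obtained by pulling back along $X\xrightarrow{\sim}\{\lambda_n\}\times X\hookrightarrow T\times X$). You supply a bit more explicit bookkeeping than the paper does (deriving $\vartheta_{\fff,1}=\id$ from the cocycle identity, and invoking Proposition~\ref{proposition monodrmie bien définie} to justify the choice of $\iota(n)$), but the argument is the same.
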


\begin{lem} \label{lemme LY = monodromic}
The functor 
$\For_{\fll^T_{t}} : \mathrm{P}^{\mathrm{LY}}_{T,\str}(X, \bk)_{\fll^T_{t}} \rightarrow \Perv(X, \bk)_{[\underline{t}]}$ is an equivalence of categories.
\end{lem}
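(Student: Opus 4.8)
The plan is to upgrade Lemma \ref{lemme for ff} from a fully faithful functor onto a full subcategory to an equivalence, so the only thing left to check is essential surjectivity of $\For_{\fll^T_{t}}$ onto $\Perv(X, \bk)_{[\underline{t}]}$. Let $\fff \in \Perv(X, \bk)_{[\underline{t}]}$; I must produce a $\widetilde{T}$-equivariant perverse sheaf (with $K = \ker(e_n)$ acting through $\chi_{\fll^T_t}$) whose underlying complex is $\fff$. By Lemma \ref{lemme monodromic = iso}, since $\fff$ is in $\Perv(X,\bk)_{[\underline{t}]}$, there exists an isomorphism $\vartheta : \fll^T_{t}\boxtimes \fff \xrightarrow{\sim} a^\ast(\fff)$ whose restriction to $\{1\}\times X$ is $\id_{\fff}$; equivalently, after pulling back along $e_n\times\id_X$ and using the canonical trivialization $\mathrm{can} : \fbk_T \xrightarrow{\sim} e_n^\ast(\fll^T_t)$ (with $\mathrm{can}_{\mid\lambda_n}=\lambda(t)^{-1}\id$ as recorded before Lemma \ref{lemme monodromic = iso}), I obtain $\widetilde{\vartheta} : \pr_X^\ast(\fff)\xrightarrow{\sim} a_X(n)^\ast(\fff)$ restricting to the identity on $\{1\}\times X$.

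The key step is to check that $\widetilde{\vartheta}$ satisfies the cocycle condition required to define a genuine $\widetilde{T}$-equivariant structure, i.e.\ the relation $(m\times\id_X)^\ast\widetilde{\vartheta} = (\id_T\times a_X(n))^\ast\widetilde{\vartheta}\circ(\pr_{2,3})^\ast\widetilde{\vartheta}$ on $T\times T\times X$. This is where the perversity of $\fff$ is essential: both sides are isomorphisms $\pr_3^\ast(\fff)\to (a_X(n)\circ(m\times\id_X))^\ast(\fff)$ between shifted perverse sheaves on $T\times T\times X$ (the relevant pullback functors being, up to shift, $t$-exact and fully faithful on perverse sheaves, by \cite[Proposition 4.2.5]{BBD}), so an equality of such isomorphisms can be tested after restriction to $\{1\}\times\{1\}\times X$, where both sides restrict to $\id_{\fff}$. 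The same rigidity (uniqueness of an isomorphism restricting to the identity on $\{1\}\times X$) shows the equivariant structure is independent of the chosen $\vartheta$ and that $\For$ is faithful in a way compatible with the construction, so the assignment $\fff \mapsto (\fff,\widetilde{\vartheta})$ is well defined.

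Next I verify that $K$ acts on this equivariant object through the character $\chi_{\fll^T_t}$: by Corollary \ref{corollary noyau monodromy}, the action of $a\in K$ on the equivariant object $(\fff,\widetilde{\vartheta})$ is $\widetilde{\vartheta}_{\mid\{a\}\times X}$, and writing $a = \lambda_n$ for a suitable $\lambda$, the computation in the proof of Lemma \ref{lemme for ff} (tracing $\widetilde{\vartheta} = \vartheta\circ(\mathrm{can}\boxtimes\id)$ and using $\mathrm{can}_{\mid\lambda_n} = \lambda(t)^{-1}\id$ together with $\vartheta_{\mid\lambda_n} = \varphi_\fff^\lambda = \lambda(t)\id$, which holds since $\fff\in\Perv(X,\bk)_{[\underline{t}]}$) gives $\widetilde{\vartheta}_{\mid\lambda_n} = \id_\fff$ scaled by $\lambda(t)\cdot\lambda(t)^{-1} = 1$... more precisely it gives exactly the value $\chi_{\fll^T_t}(a)\id_\fff$ after keeping track of the identification $\Cocar(T)/e_n^\#\Cocar(T)\cong K$ from the proof of Lemma \ref{isotypic component of the pushforward}. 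Hence $(\fff,\widetilde{\vartheta})$ lies in $D^{\mathrm{LY}}_{T,\str}(X,\bk)_{\fll^T_t}$, is perverse (as $\fff$ is and the perverse $t$-structure on the Lusztig--Yun category is the shifted one of Definition \ref{definition pervers LY}), and maps to $\fff$ under $\For_{\fll^T_t}$.

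Combining this with the full faithfulness already proved in Lemma \ref{lemme for ff}, $\For_{\fll^T_t}$ is an equivalence. The main obstacle is the cocycle verification: one must set up the right commutative diagram of pullback functors on $T\times T\times X$ and argue that the equality of the two sides can genuinely be checked on a single fiber, which relies crucially on the $t$-exactness and fidelity of $\pr^\ast$-type functors on perverse sheaves (smoothness of the action and projection maps with connected fibers, \cite[Proposition 4.2.5]{BBD}) — exactly the structural input that fails in the non-perverse setting and is the reason one restricts to perverse objects.
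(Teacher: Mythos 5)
Your proof is correct and follows essentially the same strategy as the paper's: reduce to essential surjectivity via Lemma \ref{lemme for ff}, produce an isomorphism $\pr_X^\ast\fff\cong a_X(n)^\ast\fff$ by pulling back the isomorphism from Lemma \ref{lemme monodromic = iso} along $e_n\times\id_X$, promote $\fff$ to a $\widetilde{T}$-equivariant perverse sheaf, and identify the $K$-action via Corollary \ref{corollary noyau monodromy}.

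Two small remarks on where you deviate from (and slightly overcomplicate) the paper's argument. First, you verify the cocycle condition by hand using the rigidity of isomorphisms between shifted perverse sheaves pulled back under smooth maps with connected fibers. The paper instead outsources exactly this step to \cite[Proposition A.2]{BaR}, which asserts that for perverse sheaves the mere existence of \emph{one} isomorphism $\pr^\ast\fff\cong a^\ast\fff$ already implies $G$-equivariance; the rigidity argument you run is precisely the content of that proposition (and of Lemma \ref{lemme iso = cocycle}), so you are unpacking a citation rather than finding a different route. Second, your explicit computation of the $K$-action via $\mathrm{can}$ and $\vartheta$ is muddled: writing $\vartheta_{\mid\lambda_n}=\varphi_\fff^\lambda$ does not literally make sense, since restricting $\vartheta:\fll^T_t\boxtimes\fff\to a^\ast\fff$ to $\{\lambda_n\}\times X$ does not yield an endomorphism of $\fff$ (the target is a translate of $\fff$, and the source involves the stalk $(\fll^T_t)_{\lambda_n}$). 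The computation $\lambda(t)\cdot\lambda(t)^{-1}=1$ you then write down is therefore not the right bookkeeping. Fortunately this excursion is unnecessary: once you invoke Corollary \ref{corollary noyau monodromy}, the $K$-action on the equivariant lift is literally the monodromy $\varphi_\fff^\lambda=\lambda(t)\id_\fff$, which is exactly the character $\chi_{\fll^T_t}$ under the identification $\Cocar(T)/e_n^\#\Cocar(T)\cong K_n$. That clean argument is what the paper uses and is what ultimately carries your proof as well; I would drop the $\mathrm{can}$ sidebar entirely.
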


\begin{proof}
We consider once again the isogeny $\widetilde{T}=T \xrightarrow{e_n}T$ and view $D^{\mathrm{LY}}_{T, \str}(X, \bk)_{\small{\fll^T_t}}$ as a full subcategory of $D^b_{\widetilde{T}, \str}(X, \bk)$.
Thanks to lemma \ref{lemme for ff}, we just have to show that the functor in the statement is essentially surjective. Consider an object $\fff \in \Perv(X, \bk)_{[\underline{t}]}$. By lemma \ref{lemme monodromic = iso}, we have an isomorphism $\fll^T_{t}\boxtimes \fff \xrightarrow{\sim}a^\ast(\fff)$ whose restriction to $\{1\}\times X$ is the identity of $\fff$. Pulling back along $(e_n\times \id_X)$ we obtain (as in the end of the proof of lemma \ref{lemme monodromic = iso}) an isomorphism 
$$\pr_2^\ast(\fff) \xrightarrow{\sim}a(n)^\ast \fff .$$
Thanks to \cite[Proposition A.2]{BaR}, we know that there exists a perverse object $\fgg$ in $D^b_{\Tilde{T}, \str}(X, \bk)$ such that $\For(\fgg) = \fff$. The main problem is to determine the action of $K = \ker(e_n)$ on $\fgg$. But thanks to corollary \ref{corollary noyau monodromy}, the action of $K$ on $\fgg$ gives (after applying $\For$) the monodromy of $\fff$. This tells us that $K$ acts via the character of evaluation at $t$ and concludes the proof.
\end{proof}

\begin{lem} \label{lemme iso= iso+1}
Consider $\fff\in \mathrm{P}_{\str}(X, \bk)$ such that there exists an isomorphism 
$$\fll^T_{t}\boxtimes \fff \xrightarrow{\sim}a^\ast(\fff) .$$
Then there exists a unique isomorphism $\vartheta :  \fll^T_{t}\boxtimes \fff\xrightarrow{\sim}a^\ast(\fff)$ such that $\vartheta_{\mid \{1\}\times X}$ identifies with the identity of $\fff$.
\end{lem}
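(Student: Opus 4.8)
The plan is to prove existence by correcting an arbitrary isomorphism, and uniqueness by the observation that endomorphisms of $a^\ast(\fff)$ are detected after restriction to a single point-section of $T$.

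\emph{Existence.} By hypothesis there is \emph{some} isomorphism $\psi \colon \fll^T_{t}\boxtimes\fff \xrightarrow{\sim} a^\ast(\fff)$. Restricting to $\{1\}\times X$ and using the canonical identifications $(\fll^T_{t}\boxtimes\fff)_{|\{1\}\times X}\cong\fff$ (coming from $(\fll^T_{t})_{1}\cong\bk$, as in the discussion of $\mathrm{can}$ preceding Lemma \ref{lemme monodromic = iso}) and $(a^\ast\fff)_{|\{1\}\times X}\cong\fff$ (coming from $a_{|\{1\}\times X}=\id_X$), the isomorphism $\psi$ restricts to an automorphism $\phi\in\Aut(\fff)$. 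I would then set $\vartheta:=a^\ast(\phi^{-1})\circ\psi$; since $a^\ast(\phi^{-1})$ restricts to $\phi^{-1}$ on $\{1\}\times X$ (because $a\circ j_{1}$ is the identity of $X$), the composite $\vartheta$ is again an isomorphism and restricts to $\phi^{-1}\circ\phi=\id_\fff$, as required.

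\emph{Uniqueness.} Suppose $\vartheta$ and $\vartheta'$ both restrict to $\id_\fff$ on $\{1\}\times X$. Then $\vartheta'\circ\vartheta^{-1}$ is an automorphism of $a^\ast(\fff)$ whose restriction to $\{1\}\times X$ equals $\id_\fff$. Now $a\colon T\times X\to X$ is smooth with connected fibres of dimension $r=\dim(T)$, so by \cite[Proposition 4.2.5]{BBD} the functor $a^\ast[r]$ is fully faithful on perverse sheaves; since $\fff$ is perverse this gives a bijection $a^\ast\colon\End_{D^b_{\str}(X,\bk)}(\fff)\xrightarrow{\sim}\End_{D^b_{\str}(T\times X,\bk)}(a^\ast\fff)$. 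Because $a$ restricts to the identity on $\{1\}\times X$, the composition of $a^\ast$ with restriction to $\{1\}\times X$ is the identity of $\End(\fff)$, so the restriction map $\End(a^\ast\fff)\to\End(\fff)$ is the inverse bijection and in particular injective. As $\vartheta'\circ\vartheta^{-1}$ and $\id_{a^\ast\fff}$ both restrict to $\id_\fff$, we conclude $\vartheta'\circ\vartheta^{-1}=\id_{a^\ast\fff}$, i.e.\ $\vartheta=\vartheta'$.

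The argument is essentially routine; the only point requiring care — not a real obstacle — is bookkeeping the canonical identifications of $\fll^T_{t}\boxtimes\fff$ and $a^\ast\fff$ after restriction to $\{1\}\times X$, and applying the full faithfulness statement to the honest perverse sheaf $a^\ast[r]\fff$ rather than to its shift. As an alternative to the uniqueness argument one could pull back $\vartheta$ along $e_n\times\id_X$ for $n$ the order of $t$, reduce to the uniqueness of an isomorphism $\pr_2^\ast\fff\xrightarrow{\sim}a(n)^\ast\fff$ restricting to the identity (noted in the proof of Lemma \ref{lemme monodromic = iso}), and use that $(e_n\times\id_X)^\ast$ is faithful since $n$ is prime to $\ell$; but the argument via \cite[Proposition 4.2.5]{BBD} is more direct.
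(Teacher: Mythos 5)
Your proof is correct and follows essentially the same strategy as the paper: fix a given isomorphism by a correction restricting to $\id$ on $\{1\}\times X$, then obtain uniqueness from full faithfulness (via \cite[Proposition 4.2.5]{BBD}) combined with the fact that restriction to $\{1\}\times X$ is a one-sided inverse. The only difference is cosmetic: you apply full faithfulness directly to $a^\ast$ (smooth of relative dimension $r$ with connected fibres $\cong T$), whereas the paper first establishes full faithfulness of $F = \fll^T_t\boxtimes(-)$ on perverse sheaves by twisting it to $\pr_2^\ast$, and then uses $i^\ast$ as the inverse. Your route is slightly more direct for this particular lemma; the paper's choice of establishing the full faithfulness of $F$ explicitly is motivated by the fact that it is reused verbatim in the proof of the following lemma (Lemma \ref{lemme iso = cocycle}).
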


\begin{proof}
We first show that $F = \fll^T_{t}\boxtimes (\cdot) : D^b_{\str}(X, \bk) \rightarrow D^b_{\Tilde{\str}}(T\times X, \bk)$ induces a fully faithful functor when restricted to perverse sheaves. (Here, $\Tilde{\str}$ is the induced stratification $\{T\times S \mid S \in \str\}$ on $T\times X$.)
Tensoring on the left with an invertible local system on $T\times X$ gives an auto-equivalence of category of $D^b_{\Tilde{\str}}(T\times X, \bk)$. Thus the functor $(\pr_1^\ast \fll^T_{t^{-1}}\otimes_{\bk} (-))\circ F$ is fully faithful if and only if $F$ is. This composition is clearly isomorphic to $\pr_2^\ast$, and the latter functor is fully faithful on perverse sheaves thanks to \cite[Proposition 4.2.5]{BBD}. Thus the restriction of $F$ to perverse sheaves is fully faithful. The inverse equivalence is induced by the functor $i^\ast$ where $i : X \rightarrow T \times X$ is the map that send $x$ to $(1, x)$. 

Now assume that $\beta : \fll^T_{t}\boxtimes \fff \xrightarrow{\sim}a^\ast(\fff)$ is any isomorphism, with $\fff$ perverse. Then, it is easy to see that $\vartheta := F(i^\ast(\beta)^{-1})\circ\beta$ is an isomorphism $\fll^T_{t}\boxtimes \fff \xrightarrow{\sim}a^\ast(\fff)$ such that $i^\ast(\vartheta) = \id_{\fff}$. 
If $\vartheta_1, \vartheta_2$ are two such isomorphisms, then $\vartheta_1\circ \vartheta_2^{-1}$ gives an automorphism of $\fll^T_{t}\boxtimes \fff$. Denote it by $F(g)$ (we can do that since $F$ is full). We have $i^\ast F(g) = g= \id_{\fff}$.
This readily implies that $\vartheta_1= \vartheta_2$ and concludes the proof of unicity, and hence the proof of the lemma.
\end{proof}

\begin{lem} \label{lemme iso = cocycle}
Consider $\fff\in \mathrm{P}_{\str}(X, \bk)$. Assume that there exists an isomorphism 
$$\vartheta_{\fff} : \fll^T_{t}\boxtimes \fff \xrightarrow{\sim} a^\ast(\fff) $$
such that $(\vartheta_{\fff})_{\mid \{1\}\times X}$ is $\id_{\fff}$.
Then we have 
\begin{equation}\label{cocycle iso}(m\times \id_X)^\ast(\vartheta_{\fff}) =  (\id_T\times a)^\ast(\vartheta_{\fff}) \circ (\fll^T_{t}\boxtimes \vartheta_{\fff}).
\end{equation}
Moreover for $(\fgg, \vartheta_{\fgg})$ another pair (perverse sheaf, isomorphism) and any map $f : \fff \rightarrow \fgg$, we have a commutative diagram 
$$\xymatrix{
 \fll^T_{t}\boxtimes \fff \ar[rr]^{\fll^T_{t}\boxtimes f} \ar[d]^\wr_{\vartheta_{\fff}} && \fll^T_{t}\boxtimes \fgg  \ar[d]_\wr^{\vartheta_{\fgg}} \\
a^\ast(\fff) \ar[rr]_{a^\ast(f)} && a^\ast(\fgg).
}
$$
\end{lem}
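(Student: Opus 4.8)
The plan is to deduce both statements from full faithfulness of external tensor functors on perverse sheaves, which is essentially the content of Lemma~\ref{lemme iso= iso+1} and of a one-line variant of it, together with a trivial computation of the restriction to $\{1\}\times X$. Write $r=\dim T$. I will use two facts, both recorded (or immediate from what is recorded) above: (i) by Lemma~\ref{lemme iso= iso+1}, the functor $F:=\fll^T_t\boxtimes(-)$ is fully faithful on perverse sheaves and admits the restriction $i^\ast$ along $i\colon X\hookrightarrow T\times X$, $x\mapsto(1,x)$, as a quasi-inverse on its essential image; (ii) running the same argument with the rank-one local system $\fll^T_t\boxtimes\fll^T_t$ on the torus $T\times T$ in place of $\fll^T_t$ on $T$ shows that $F':=\fll^T_t\boxtimes\fll^T_t\boxtimes(-)$ is fully faithful on perverse sheaves, with quasi-inverse on its image the restriction $(i')^\ast$ along $i'\colon X\hookrightarrow T\times T\times X$, $x\mapsto(1,1,x)$. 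I will also freely use that, by \cite[Propositions 4.2.5 and 4.2.8]{BBD} (as in the proof of Lemma~\ref{lemme monodromic = iso}), the sources and targets appearing below are perverse after a shift by $r$ or $2r$, so that these full faithfulness statements apply to them.

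For the naturality square I would proceed as follows. Both $a^\ast(f)\circ\vartheta_\fff$ and $\vartheta_\fgg\circ(\fll^T_t\boxtimes f)$ are morphisms $\fll^T_t\boxtimes\fff\to a^\ast(\fgg)$, so after post-composing with the isomorphism $\vartheta_\fgg^{-1}$ it suffices to check that $\vartheta_\fgg^{-1}\circ a^\ast(f)\circ\vartheta_\fff$ coincides with $\fll^T_t\boxtimes f=F(f)$ as morphisms $\fll^T_t\boxtimes\fff\to\fll^T_t\boxtimes\fgg$. By (i) the left-hand side equals $F(h_0)$ with $h_0:=i^\ast(\vartheta_\fgg^{-1}\circ a^\ast(f)\circ\vartheta_\fff)$; since $a\circ i=\id_X$ and $i^\ast\vartheta_\fff$, $i^\ast\vartheta_\fgg$ are identities by hypothesis, $h_0$ collapses to $f$, and we are done.

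For the cocycle identity~\eqref{cocycle iso} I would first observe, using multiplicativity of $\fll^T_t$ (which identifies $(m\times\id_X)^\ast(\fll^T_t\boxtimes\fff)$ with $F'(\fff)$) and associativity of the action (so that $a\circ(m\times\id_X)=a\circ(\id_T\times a)$), that both sides of~\eqref{cocycle iso} are isomorphisms $F'(\fff)\to\bigl(a\circ(m\times\id_X)\bigr)^\ast\fff$, the left-hand one being $(m\times\id_X)^\ast\vartheta_\fff$, hence invertible. Post-composing both sides with its inverse, I reduce to showing that the resulting automorphism $u$ of $F'(\fff)$ equals $\id_{F'(\fff)}$. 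By (ii), $u=F'\bigl((i')^\ast u\bigr)$, and $(i')^\ast u$ is the composite (with one inverse) of the $(i')^\ast$-restrictions of $(m\times\id_X)^\ast\vartheta_\fff$, of $(\id_T\times a)^\ast\vartheta_\fff$ and of $\fll^T_t\boxtimes\vartheta_\fff$; each of these three restrictions is $\id_\fff$, because $(m\times\id_X)\circ i'=(\id_T\times a)\circ i'=i$, because $i^\ast\vartheta_\fff=\id_\fff$, and because $(\fll^T_t)_1=\bk$. Hence $(i')^\ast u=\id_\fff$ and $u=\id_{F'(\fff)}$, which proves~\eqref{cocycle iso}.

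The points requiring care, rather than genuine difficulty, are the verification of (ii) and the tracking of the canonical identifications that make the sources and targets of the two sides of~\eqref{cocycle iso} literally agree. For (ii) one repeats the proof of Lemma~\ref{lemme iso= iso+1} verbatim (tensor with an invertible local system on $T\times T\times X$ to reduce to the pullback along the projection to $X$, then invoke \cite[Proposition 4.2.5]{BBD}); the matching of sources and targets is precisely multiplicativity of $\fll^T_t$ combined with associativity of the $T$-action. So I expect the only real ``obstacle'' here to be bookkeeping.
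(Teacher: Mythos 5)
Your proof is correct and uses essentially the same argument as the paper: reduce both the cocycle identity and the naturality square to equalities of automorphisms of objects in the essential image of $\fll^T_t\boxtimes(-)$ (respectively its double version on $T\times T$), invoke the full faithfulness of these external-tensor functors on perverse sheaves as established in Lemma~\ref{lemme iso= iso+1}, and check the restriction to $\{1\}\times X$ (respectively $\{(1,1)\}\times X$) collapses to the identity. Your write-up is more explicit than the paper's (which leaves the naturality square to the reader and compresses the $(i')^\ast$ computation), but the strategy is identical.
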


\begin{proof}
Rewrite the equality \eqref{cocycle iso} as $u = v$. We consider $u\circ v^{-1}$. This is an automorphism of $\fll^T_{t}\boxtimes \fll^T_{t}\boxtimes \fff$. Denote by $j$ the map $X \rightarrow T\times T \times X$ that sends $x$ to $(1,1,x)$. Using the same argument as in the proof of lemma \ref{lemme iso= iso+1}, we see that since $\fff$ is perverse, the maps $$\End_{D^b_{\str}(X, \bk)}(\fff, \fff) \leftrightarrow \End_{D^b_{\Tilde{\str}}(T\times T\times X, \bk)}((\fll^T_{t})^{\boxtimes 2}\boxtimes \fff, (\fll^T_{t})^{\boxtimes 2}\boxtimes \fff)$$
induced by the functors 
$$\fll^T_{t}\boxtimes\fll^T_{t}\boxtimes (\cdot) \quad \mbox{and} \quad j^\ast$$
are mutually inverse isomorphisms. This implies in particular that $u \circ v^{-1} = \id_{(\fll^T_{t})^{\boxtimes 2}\boxtimes \fff}$. Thus we have $u = v$.

The proof of the statement about morphisms is similar to the end of the proof of lemma \ref{lemme iso= iso+1}, and left to the reader.

\end{proof}

\subsection{The equivalences}

We can refine the definition of the functors $\For_{\small{\fll^T_t}}$ and $\For_{\mathrm{iso}}$. Lemma \ref{lemme monodromic = iso} tells us that $\For_{\mathrm{iso}}$ has essential image in $\Piso(X, \bk)$.
According to lemma \ref{lemme for ff}, $\For_{\small{\fll^T_t}}$ induces a functor $\mathrm{P}^{\mathrm{LY}}_{T,\str}(X, \bk)_{\fll^T_{t}}\rightarrow \Perv(X, \bk)_{[\underline{t}]}$. 
In fact, the results of section \ref{section Perverse monodromic sheaves}, give us a bit more: we have the following announced result.
\begin{prop} \label{proposition monodromic perverse sheaves coincide}
The four categories $\mathrm{P}^{\mathrm{LY}}_{T,\str}(X, \bk)_{\fll^T_{t}}$, $\Perv(X,\bk)_{[\underline{t}]}$, $\Pco(X, \bk)$ and $\Piso(X, \bk)$ are canonically equivalent: the functors $\For_{\small{\fll^T_t}}$, $\For_{\mathrm{co}}$ and $\For_{\mathrm{iso}}$ are equivalences of categories.
\end{prop}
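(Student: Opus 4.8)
The plan is to assemble the lemmas of \S\ref{section Perverse monodromic sheaves}: essentially all the substance is already contained there, so the remaining task is to organize it into the three claimed equivalences and chain them. I would treat the three functors one at a time.

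The functor $\For_{\fll^T_t}\colon \mathrm{P}^{\mathrm{LY}}_{T,\str}(X,\bk)_{\fll^T_{t}}\to\Perv(X,\bk)_{[\underline t]}$ is an equivalence by Lemma~\ref{lemme LY = monodromic}, so there is nothing to add there. Next I would check that $\For_{\mathrm{iso}}$ identifies $\Perv(X,\bk)_{[\underline t]}$ with $\Piso(X,\bk)$ as full subcategories of $\Perv(X,\bk)$. By the forward implication of Lemma~\ref{lemme monodromic = iso}, any $\fff\in\Perv(X,\bk)_{[\underline t]}$ admits an isomorphism $a^\ast(\fff)\cong\fll^T_{t}\boxtimes\fff$ (in fact one restricting to $\id_\fff$), hence lies in $\Piso(X,\bk)$; since source and target are full subcategories of $\Perv(X,\bk)$ with the same morphism spaces, $\For_{\mathrm{iso}}$ is fully faithful. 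For essential surjectivity, given $\fff\in\Piso(X,\bk)$ I would invoke Lemma~\ref{lemme iso= iso+1} to replace the given isomorphism by one restricting to the identity on $\{1\}\times X$, and then the converse implication of Lemma~\ref{lemme monodromic = iso} yields $\fff\in\Perv(X,\bk)_{[\underline t]}$. Thus $\For_{\mathrm{iso}}$ is an equivalence.

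Finally, for $\For_{\mathrm{co}}\colon\Pco(X,\bk)\to\Piso(X,\bk)$: faithfulness is immediate, since by definition a morphism in $\Pco(X,\bk)$ is a morphism of the underlying perverse sheaves. Fullness is exactly the ``moreover'' part of Lemma~\ref{lemme iso = cocycle}: any morphism $f\colon\fff\to\fgg$ of underlying perverse sheaves is automatically compatible with the cocycle isomorphisms. Essential surjectivity follows by combining Lemma~\ref{lemme iso= iso+1} (which, for $\fff\in\Piso(X,\bk)$, produces a unique $\vartheta_\fff\colon\fll^T_{t}\boxtimes\fff\xrightarrow{\sim}a^\ast(\fff)$ restricting to $\id_\fff$) with Lemma~\ref{lemme iso = cocycle} (which says this $\vartheta_\fff$ satisfies the cocycle condition), so that $(\fff,\vartheta_\fff)\in\Pco(X,\bk)$ maps to $\fff$. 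Chaining the three equivalences gives
\[
\mathrm{P}^{\mathrm{LY}}_{T,\str}(X,\bk)_{\fll^T_{t}}\;\simeq\;\Perv(X,\bk)_{[\underline t]}\;\simeq\;\Piso(X,\bk)\;\simeq\;\Pco(X,\bk).
\]
I do not expect a genuine obstacle at this stage: the hard points (local-to-global gluing of the monodromy isomorphism, the rigidity of $\pr_2^\ast$ and of $\fll^T_t\boxtimes(-)$ on perverse sheaves, and the translation between topological monodromy and the Lusztig--Yun equivariant structure) have all been dispatched in Lemmas~\ref{lemme monodromic = iso}, \ref{lemme for ff}, \ref{lemme LY = monodromic}, \ref{lemme iso= iso+1} and \ref{lemme iso = cocycle}. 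The only point deserving a moment's care is the fullness of $\For_{\mathrm{co}}$, where one must know that forgetting the equivariant datum loses no morphisms --- precisely the ``moreover'' clause of Lemma~\ref{lemme iso = cocycle}.
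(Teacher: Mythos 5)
Your proof is correct and takes essentially the same approach as the paper's: it assembles the same three lemmas (\ref{lemme monodromic = iso}, \ref{lemme iso= iso+1}, \ref{lemme iso = cocycle}) together with Lemma~\ref{lemme LY = monodromic} to produce the three equivalences. You are in fact slightly more explicit than the paper in pointing out that essential surjectivity of $\For_{\mathrm{iso}}$ needs Lemma~\ref{lemme iso= iso+1} to upgrade an arbitrary isomorphism $a^\ast\fff\cong\fll^T_t\boxtimes\fff$ to one restricting to the identity before the converse direction of Lemma~\ref{lemme monodromic = iso} can be applied; the paper leaves this implicit.
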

\begin{proof}
All the work is already done: by lemma \ref{lemme monodromic = iso}, we have a canonical equivalence $\Perv(X, \bk)_{[\underline{t}]}\cong \Piso(X, \bk)$ (in fact these two categories are the same as full subcategories of $D^b_{\str}(X, \bk)$). From lemma \ref{lemme iso = cocycle} and lemma \ref{lemme iso= iso+1}, the categories $\Piso(X, \bk)$ and $\Pco(X, \bk)$ are equivalent. Finally, lemma \ref{lemme LY = monodromic} gives the equivalence $\mathrm{P}^{\mathrm{LY}}_{T,\str}(X, \bk)_{\fll^T_{t}}\cong \Perv(X, \bk)_{[\underline{t}]}$.
\end{proof}

\section{Highest weight structure of perverse monodromic sheaves} \label{section Highest weight structure of perverse monodromic sheaves}


\subsection{Standard, costandard and intersection cohomology complexes}


We consider the following setting: $\CX= \bigcup_{\alpha \in \Lambda}\CX_\alpha$ is an algebraically stratified $T$-variety with $T$ stable strata. In particular, $\Lambda$ is finite. We assume that there exists isomorphisms $\CX_{\alpha}\cong T \times \A^{n_\alpha}$ for any $\alpha$ (and we fix such isomorphisms once and for all) and that the action of $T$ on $ \CX_{\alpha}$ corresponds to the action of multiplication of $T$ on itself. We let $d_{\alpha} : = n_\alpha+ r$ be the dimension of $\CX_\alpha$. Let also $j_{\alpha} : \CX_{\alpha} \hookrightarrow \CX$ be the inclusion map.

The isomorphism classes of irreducible local systems on $\CX_\alpha$ are in bijection with elements of $T^\vee_{\bk}$: more precisely, the pullback along the map $\CX_{\alpha} \xrightarrow{\sim}T \times \A^{n_\alpha}\xrightarrow{\pr} T$ gives an equivalence $\loc(T, \bk) \cong \loc(\CX_{\alpha}, \bk)$. For any $t \in T^\vee_{\bk}$, we obtain an irreducible local system $\widetilde{\fll^{\alpha}_{t}}$. We then set $\fll^{\alpha}_{t} : = \widetilde{\fll^{\alpha}_{t}}[d_\alpha] $, so that $\fll^{\alpha}_{t}$ is perverse on $\CX_{\alpha}$.

We also denote 
$$\De^{\alpha}_{t} = (j_{\alpha})_! \fll^{\alpha}_t \quad \mbox {and} \quad \Na^{\alpha}_t := (j_{\alpha})_{\ast}\fll^{\alpha}_t.$$
Note that the $j_{\alpha}$'s are affine maps (indeed, the $\CX_{\alpha}$'s are affine and $\CX$ is a separated scheme), so that $\De^{\alpha}_{t}$ and $\Na^{\alpha}_t$ are perverse. Note also that thanks to lemma \ref{lemme pullback monodromy} and lemma \ref{monodromy des sytème locaux}, these objects are in $\mathrm{P}_{\Lambda}(\CX, \bk)_{[\underline{t}]}$.
Finally, let $\IC^{\alpha}_t : = (j_{\alpha})_{! \ast}\fll^{\alpha}_t$ be the intermediate extension of $\fll^{\alpha}_t$.

The $\De^{\alpha}_t$'s and $\Na^{\alpha}_t$'s for various $\alpha$ and $t$ are called standard and costandard objects respectively. (The justification for these names will be given in the next subsection.)

Note that the pullback under the isomorphism $\CX_{\alpha}\cong T \times \A^{n_\alpha}$ of the perverse sheaf $\fll^{\alpha}_t$ on $\CX_{\alpha}$ is the perverse sheaf $\fll^{T}_t[d_{\alpha}] \boxtimes \fbk_{\A^{n_\alpha}} $. Then the pullback of $\fll^{\alpha}_t$ under the action map $T \times \CX_{\alpha}\rightarrow \CX_{\alpha}$ is $\fll^{T}_t\boxtimes \fll^T_{t}[d_\alpha]\boxtimes \fbk_{\A^{n_\alpha}}$. Since for $n$ divisible by the order of $t$ we have $e_n^\ast \fll^T_{t} = \fbk_T$, it follows easily that $\fll^{\alpha}_{t}$ is $T$-equivariant for the action ``twisted" by $e_n$. It is then immediate that $\fll^{\alpha}_t[-r]$ is in $\mathrm{P}^{\mathrm{LY}}_{ T, \Lambda}(\CX_{\alpha}, \bk)_{\small{\fll^T_t}}$. It follows that $\De^{\alpha}_{t}[-r]$, $\Na^{\alpha}_t[-r]$ and $\IC^{\alpha}_t[-r]$ define objects in $\mathrm{P}^{\mathrm{LY}}_{ T, \Lambda}(\CX, \bk)_{\small{\fll^T_t}}$. In order to be consistent with \cite{LY}, we set 
$$\De^{\alpha}_t: = \De(\alpha)_{\small{\fll^T_t}}[-r], \quad \Na^{\alpha}_t: = \Na(\alpha)_{\small{\fll^T_t}}[-r], \quad \IC^{\alpha}_t: = \IC(\alpha)_{\small{\fll^T_t}}[-r]$$
when we consider these objects in $\mathrm{P}^{\mathrm{LY}}_{T,\Lambda}(X, \bk)_{\fll^T_{t}}$.
The forgetful functor $\mathrm{P}^{\mathrm{LY}}_{T,\Lambda}(\CX, \bk)_{\fll^T_{t}}\\ \rightarrow \mathrm{P}_{\Lambda}(\CX, \bk)_{[\underline{t}]}$ maps $\De(\alpha)_{\small{\fll^T_t}}$ to $\De^{\alpha}_t$ and similarly for the $\Na$'s and $\IC$'s.

\subsection{The case of a single stratum} \label{section single stratum}

 If we apply the equivalence of proposition \ref{proposition monodromic perverse sheaves coincide} to the case where $\CX = \CX_{\alpha}$ (i.e. a stratified space with only one stratum), we obtain an equivalence 
 $$\mathrm{P}^{\mathrm{LY}}_{T,\Lambda}(\CX_\alpha, \bk)_{\fll^T_{t}} \longleftrightarrow \mathrm{P}_{\Lambda}(\CX_{\alpha}, \bk)_{[\underline{t}]}.$$
Once again, the shifted pullback along the map $\CX_{\alpha}\xrightarrow{\sim}T \times \A^{n_\alpha}\rightarrow T$ gives an equivalence $\mathrm{P}_{\Lambda}(\CX_\alpha, \bk) \cong \loc(T, \bk)$ (since $\loc(\CX_\alpha, \bk) \cong \loc(T, \bk)$) and obviously 
 $$ \mathrm{P}_{\Lambda}(\CX_\alpha, \bk)_{[\underline{t}]} \cong \loc(T, \bk)_{[\underline{t}]}.$$
 We showed previously (see lemma \ref{monodromy des sytème locaux}) that for local systems the canonical morphism of monodromy (which defines an action of $\bk[\Cocar(T)]$) coincides with the action of $\bk[\Cocar(T)]$ on the associated $\bk[\Cocar(T)]$-module. In particular, we have an equivalence 
 $$ \loc(T, \bk)_{[\underline{t}]} \cong \bk[\Cocar(T)]/\langle e^\lambda - \lambda(t)\rangle \modu \cong \bk\modu.$$
 As $\bk$ is a field, this last category is semi-simple, and we deduce that $\mathrm{P}^{\mathrm{LY}}_{T,\Lambda}(\CX_\alpha, \bk)_{\fll^T_{t}}$ is semi-simple as well.

\subsection{Highest weight categories}
We recall here the definition of a highest weight category. Our main reference is \cite[\S 7]{R1}. Let $\mathcal{A}$ be a finite-length $\K$-linear abelian category with $\K$ a field such that for any objects $M,N \in \mathcal A$, the $\K$-vector-space $\Hom_{\mathcal{A}}(M,N)$ is finite-dimensional. Let $\str$ be the set of isomorphism classes of irreducible objects in $\mathcal{A}$ and assume that we have a partial order $\leq$ on $\str$. For any $s \in \str$, fix a representative $L_s$. Assume also that we have objects $\De_s, \Na_s$ together with morphisms 
$$\De_s \rightarrow L_s \quad \mbox{and} \quad L_s \rightarrow \Na_s.$$
For any subset $\str'\subseteq \str$, let $\mathcal{A}_{\str'}$ be the Serre subcategory generated by the $L_s$ with $s \in \str'$. We say that $\str'$ is an ideal for the order $\leq$ if ($s \in \str'$ and $t \leq s$) implies ($t \in \str'$).
\begin{defn} \label{definition hwc}
The data $(\mathcal{A}, (\str, \leq), L_s, \De_s\rightarrow L_s, L_s \rightarrow \Na_s)$ defines a highest weight category with weight poset $(\str, \leq)$ if the following conditions hold: 
\begin{enumerate}
    \item for any $s \in \str$, the subset $\{t\leq s\mid t \in \str\}$ is finite, 
    \item for each $s\in \str$, we have $\Hom_{\mathcal{A}}(L_s, L_s) = \K$, 
    \item for any ideal $\str' \subseteq \str$ such that $s$ is maximal in $\str'$, $\De_s \rightarrow L_s$ is a projective cover in $\mathcal{A}_{\str'}$ and $L_s \rightarrow \Na_s$ is an injective hull in $\mathcal{A}_{\str'}$, 
    \item the kernel of $\De_s \rightarrow L_s$ and the cokernel of $L_s \rightarrow \Na_s$ are in $\mathcal{A}_{<s}$, 
    \item for any $s,t\in \str$, we have $\Ext^2_{\mathcal{A}}(\De_s, \Na_t)=0$.
\end{enumerate}
The objects $\De_s$ for $s \in \str$ are called standard objects; the $\Na_s$'s are called costandard.
\end{defn}
We almost always forget part of the data above and just refer to $\mathcal{A}$ as a highest weight category.

\subsection{Highest weight structure}
This section and the arguments that follow are faithfully inspired from \cite[\S 3.2 and 3.3]{BGS}. 
The set $\Lambda$ is a poset for the order $\leq$ defined by
$$(\alpha \leq \alpha')\; \Leftrightarrow \;(\overline{\CX_{\alpha}} \subseteq \overline{\CX_{\alpha'}}).$$

 \begin{thm}
 The category $\mathrm{P}_{\Lambda}(\CX, \bk)_{[\underline{t}]}$ together with the natural morphisms $\De^{\alpha}_t \twoheadrightarrow \IC^{\alpha}_t \hookrightarrow \Na^{\alpha}_t$ and with weight poset the pair $(\Lambda, \leq)$ is a highest-weight category.
 \end{thm}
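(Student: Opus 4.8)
The plan is to transfer the statement to the Lusztig--Yun equivariant side, where one has a genuine triangulated category carrying a perverse $t$-structure, and then to imitate \cite[\S 3.2--3.3]{BGS}; the rôle played there by the hypothesis that the strata be affine spaces will be played here by the cohomological triviality of the monodromic local system on each stratum.

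By Proposition \ref{proposition monodromic perverse sheaves coincide} the functor $\For_{\fll^T_t}$ is an equivalence between $\mathcal A := \mathrm{P}^{\mathrm{LY}}_{T,\Lambda}(\CX, \bk)_{\fll^T_t}$ and $\mathrm{P}_{\Lambda}(\CX, \bk)_{[\underline t]}$ carrying $\De(\alpha)_{\fll^T_t}$, $\Na(\alpha)_{\fll^T_t}$, $\IC(\alpha)_{\fll^T_t}$ to $\De^\alpha_t$, $\Na^\alpha_t$, $\IC^\alpha_t$, so it suffices to check the axioms of Definition \ref{definition hwc} for $\mathcal A$. As the heart of the perverse $t$-structure on $D := D^{\mathrm{LY}}_{T,\Lambda}(\CX, \bk)_{\fll^T_t}$, the category $\mathcal A$ is abelian, of finite length, with finite-dimensional $\Hom$-spaces. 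By the recollement formalism for equivariant derived categories (\cite{BL}, \cite{BBD}) together with \S \ref{section single stratum} --- where the category of monodromic perverse sheaves on a single stratum $\CX_\alpha$ is identified with $\bk\modu$, hence is semisimple with unique simple object $\fll(\alpha)_{\fll^T_t}$ --- the simple objects of $\mathcal A$ are precisely the intermediate extensions $\IC(\alpha)_{\fll^T_t}$, $\alpha\in\Lambda$, and these are pairwise non-isomorphic. Thus $\Lambda$ is the weight poset; axiom (1) holds because $\Lambda$ is finite, and axiom (2) holds because $\End_{\mathcal A}(\IC(\alpha)_{\fll^T_t})$ is a finite-dimensional division algebra over the algebraically closed field $\bk$.

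The crucial point is the single-stratum computation
\begin{equation*}
  R\Hom_{D^{\mathrm{LY}}_{T}(\CX_\alpha, \bk)_{\fll^T_t}}\bigl(\fll(\alpha)_{\fll^T_t},\ \fll(\alpha)_{\fll^T_t}\bigr)\ =\ \bk ,
\end{equation*}
concentrated in degree $0$. To see this, realize the left-hand category as a direct summand of $D^b_{\widetilde T}(\CX_\alpha, \bk)$ along the isogeny $e_n \colon \widetilde T = T \to T$, with $n$ prime to $\ell$ and divisible by the order of $t$. On $\CX_\alpha \cong T \times \A^{n_\alpha}$ the group $\widetilde T$ acts through $e_n$, trivially on $\A^{n_\alpha}$ and transitively along the $T$-factor, with stabiliser $K = \ker(e_n)$ a finite group of order prime to $\ell$; hence $\mathbf H^\bullet_{\widetilde T}(\CX_\alpha, \bk) \cong \mathbf H^\bullet_{K}(\pt, \bk) = \bk$, and passing to the $\chi_{\fll^T_t}$-isotypic summand does not change this, since tensoring the constant sheaf on the relevant classifying space by a rank-one local system leaves its cohomology unchanged. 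This is exactly the analogue of ``the strata are affine spaces'' in \cite{BGS}: the non-trivial topology of the torus factor is annihilated by the monodromic equivariant structure.

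From here the argument is formal recollement. Using the adjunction $(j_{\alpha!}, j_\alpha^!)$ and the standard vanishing $i^! j_\ast = 0$ for complementary open/closed immersions (applied to the inclusion of $\CX_\beta$ in its closure), one gets $j_\alpha^! j_{\beta\ast}(-) = 0$ for $\alpha \neq \beta$, while $j_\alpha^! j_{\alpha\ast} = \id$; together with the single-stratum computation this gives
\begin{equation*}
  R\Hom_{D}\bigl(\De(\alpha)_{\fll^T_t},\ \Na(\beta)_{\fll^T_t}\bigr)\ =\
  \begin{cases} \bk & \text{if } \alpha = \beta,\\ 0 & \text{if } \alpha \neq \beta, \end{cases}
\end{equation*}
the non-zero term living in degree $0$. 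Since $\mathcal A$ is the heart of a $t$-structure, $\Ext^2_{\mathcal A}(-,-)$ embeds into $\Hom_{D}(-,-[2])$, so axiom (5) follows. For axiom (4), the kernel $N$ of the canonical surjection $\De(\alpha)_{\fll^T_t} \twoheadrightarrow \IC(\alpha)_{\fll^T_t}$ is annihilated by restriction to $\CX_\alpha$, hence supported on $\overline{\CX_\alpha}\setminus\CX_\alpha = \bigcup_{\gamma < \alpha}\CX_\gamma$, so $N \in \mathcal A_{<\alpha}$ (the latter being the category of monodromic perverse sheaves supported on that closed union of strata); the claim for $\coker(\IC(\alpha)_{\fll^T_t} \hookrightarrow \Na(\alpha)_{\fll^T_t})$ is dual. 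For axiom (3), fix an ideal $\Lambda'\subseteq\Lambda$ with $\alpha$ maximal; then $\mathcal A_{\Lambda'}$ is the category of monodromic perverse sheaves on the closed union $Z = \bigcup_{\gamma\in\Lambda'}\CX_\gamma$, in which $\CX_\alpha$ is open with inclusion $j\colon \CX_\alpha\hookrightarrow Z$ and $\De(\alpha)_{\fll^T_t} = j_!\fll(\alpha)_{\fll^T_t}$. For any perverse $M$ on $Z$, $j^\ast M$ is a direct sum of copies of $\fll(\alpha)_{\fll^T_t}$ (by \S \ref{section single stratum}), so by adjunction and the single-stratum computation $\Ext^1_{\mathcal A_{\Lambda'}}(\De(\alpha)_{\fll^T_t}, M) \cong \Ext^1_{D^{\mathrm{LY}}_{T}(\CX_\alpha, \bk)_{\fll^T_t}}(\fll(\alpha)_{\fll^T_t}, j^\ast M) = 0$; thus $\De(\alpha)_{\fll^T_t}$ is projective in $\mathcal A_{\Lambda'}$, it has endomorphism ring $\bk$ hence is indecomposable with simple top, which must be $\IC(\alpha)_{\fll^T_t}$, so $\De(\alpha)_{\fll^T_t}\twoheadrightarrow\IC(\alpha)_{\fll^T_t}$ is a projective cover; dually $\Na(\alpha)_{\fll^T_t} = j_\ast\fll(\alpha)_{\fll^T_t}$ is injective in $\mathcal A_{\Lambda'}$ with socle $\IC(\alpha)_{\fll^T_t}$, yielding the injective hull. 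Transporting through $\For_{\fll^T_t}$ gives the theorem. The one genuinely non-formal ingredient is the single-stratum cohomology computation of the third paragraph; the rest is the recollement machinery of \cite{BBD} and \cite{BGS}.
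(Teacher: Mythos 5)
Your proposal is correct and follows essentially the same route as the paper: transfer to the Lusztig--Yun equivariant side via Proposition~\ref{proposition monodromic perverse sheaves coincide}, isolate the single-stratum vanishing $R\Hom_{D^{\mathrm{LY}}_T(\CX_\alpha,\bk)_{\fll^T_t}}(\fll(\alpha)_{\fll^T_t},\fll(\alpha)_{\fll^T_t})=\bk$ concentrated in degree~$0$, and then push everything else through recollement and adjunction. The two cosmetic differences --- you obtain the single-stratum vanishing from the equivariant cohomology isomorphism $\mathbf{H}^\bullet_{\widetilde T}(\CX_\alpha,\bk)\cong\mathbf{H}^\bullet_{K_n}(\pt,\bk)=\bk$, where the paper instead uses $D^b_{\widetilde T}(T,\bk)\cong D^b(\Rep_\bk(K_n))$ together with full faithfulness of the pullback to $\CX_\alpha$; and you establish axiom~(3) by proving $\Ext^1_{\mathcal A_{\Lambda'}}(\De(\alpha),M)=0$ directly for arbitrary perverse $M$ rather than only against the simples $\IC(\beta)$ and then invoking indecomposability --- are genuinely equivalent and do not constitute a different argument.
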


 \begin{proof} For the sake of simplicity (of notations) we will set, for the duration of this proof, $\mathrm{P}_{\Lambda}(\CX,\bk)_{[\underline{t}]} = \mathcal{C}$. 
 According to definition \ref{definition hwc}, we have 5 points to check. The first one is clear (since $\Lambda$ itself is finite). Using the fact that $(j_\alpha)_{!\ast} : \loc(\CX_\alpha, \bk) \rightarrow \mathrm{P}_{\Lambda}(\CX, \bk)$ is fully faithful and the equivalence $\loc(\CX_\alpha, \bk) \cong \loc(T, \bk)$, one gets
 $$\Hom_{\mathcal{C}}(\IC^{\alpha}_{t}, \IC^{\alpha}_{t})=\Hom_{\mathrm{P}_{\Lambda}(\CX, \bk)}(\IC^{\alpha}_{t}, \IC^{\alpha}_{t}) = \Hom_{\loc(T, \bk)}(\fll^T_{t}, \fll^T_{t}) = \bk.$$

 Now consider an ideal $I \subset \Lambda$. Assume that $\alpha$ is a maximal element in $I$. We have to show that the canonical morphism $\De^{\alpha}_{t}\rightarrow \IC^{\alpha}_{t}$ is the projective cover of $\IC^{\alpha}_{t}$ in the category $\mathcal{C}_{I}$. We must show that 
\begin{equation}\label{hwt 1}\Hom_{\mathcal{C}_I}(\De^{\alpha}_{t}, \IC^{\beta}_{t}) = \left\{
\begin{array}{rl}
  \bk &\mbox{ if } \alpha = \beta\\
0 &\mbox{ otherwise}
\end{array}
\right. 
\end{equation}
and that 
\begin{equation} \label{hwt 2} \Ext^1_{\mathcal{C}_I}(\De^{\alpha}_{t}, \IC^{\beta}_{t}) =0
\end{equation}
for any $\beta \in I$.
We use the equivalence 
$$\mathrm{P}_{\Lambda}(\CX, \bk)_{[ \underline{t}]} \cong \mathrm{P}^{\mathrm{LY}}_{T,\Lambda}(\CX_\alpha, \bk)_{\fll^T_{t}}$$
of lemma \ref{lemme LY = monodromic}. In the latter category, we have $\Ext^1_{\mathrm{P}^{\mathrm{LY}}_{T, \Lambda}(X, \bk)_{\small{\fll^T_t}}}(\De(\alpha)_{\small{\fll^T_t}}, \IC(\beta)_{\small{\fll^T_t}}) = \Hom_{D^{\mathrm{LY}}_{T, \Lambda}(X, \bk)_{\small{\fll^T_t}}}\De(\alpha)_{\small{\fll^T_t}}, \IC(\beta)_{\small{\fll^T_t}}[1])$ (see \cite[Remarque 3.1.17 (ii)]{BBD}, this is a standard feature of $t$-structure). We can then use adjunction and we deduce \eqref{hwt 1} and \eqref{hwt 2} in the case $\alpha \neq \beta$. We treat the case $\alpha = \beta$. Using the equivalence above two times and adjunction, we obtain an isomorphism:
$$\Ext^1_{\mathrm{P}_{\Lambda}(\CX_\alpha, \bk)_{[ \underline{t}]}}(\fll^{\alpha}_{t} , \fll^{\alpha}_{t}) \cong \Ext^1_{\loc(T, \bk)_{[\underline{t}]}}(\fll^T_{t}, \fll^T_{t}) .$$ Now the category $\loc(T, \bk)_{[\underline{t}]}$ is semisimple as we saw in \S \ref{section single stratum}.
so this $\Ext$ space vanishes. This concludes the verification of \eqref{hwt 2}.
We check similarly that $\IC^{\alpha}_{t} \rightarrow \Na^{\alpha}_{t}$ is an injective hull in $\mathcal{C}_I$. 

The fact that the kernel (resp. the cokernel) of $\De^{\alpha}_{t}\rightarrow \IC^{\alpha}_{t}$ (resp. of $\IC^{\alpha}_{t} \rightarrow \Na^{\alpha}_{t}$) lies in $\mathcal{C}_{< \alpha}$ comes from the facts that these objects are supported on $\overline{\CX_\alpha} = \sqcup_{\beta \leq \alpha}\CX_\beta$ and that this morphism is an isomorphism once restricted to $\CX_\alpha$.  

We check that $\Ext^2_{\mathcal{C}}(\De^{\alpha}_{t}, \Na^{\beta}_{t})=0$ for any $\alpha, \beta \in \Lambda$. 
We have 
$$\Ext^2_{\mathrm{P}_{\Lambda}(\CX, \bk)_{[ \underline{t}]}}(\De^{\alpha}_{t}, \Na^{\beta}_{t}) \cong \Ext^2_{\mathrm{P}^{\mathrm{LY}}_{T,\Lambda}(\CX_\alpha, \bk)_{\fll^T_{t}}}(\De(\alpha)_{\fll^T_{t}}, \Na(\beta)_{\fll^T_{t}}).$$
Now we know $\mathrm{P}^{\mathrm{LY}}_{T,\Lambda}(\CX_\alpha, \bk)_{\fll^T_{t}}$ is the heart of the perverse $t$-structure on the category\\ $D^{\mathrm{LY}}_{ T, \Lambda}(\CX, \bk)_{\fll^T_{t}}$. Thanks to \cite[Remarque 3.1.17 (ii)]{BBD}, we have an injection 
$$\Ext^2_{\mathrm{P}^{\mathrm{LY}}_{T,\Lambda}(\CX_\alpha, \bk)_{\fll^T_{t}}}(\De(\alpha)_{\fll^T_{t}}, \Na(\beta)_{\fll^T_{t}}) \hookrightarrow \Hom_{D^{\mathrm{LY}}_{ T, \Lambda}(\CX, \bk)_{\fll^T_{t}}}(\De(\alpha)_{\fll^T_{t}}, \Na(\beta)_{\fll^T_{t}}[2]).$$ We use adjunction; the only case that still needs work is the case $\alpha = \beta$. What we need to consider is now 
$$ \Hom_{D^{\mathrm{LY}}_{ T, \Lambda}(\CX_{\alpha}, \bk)_{\fll^T_{t}}}(\fll^{\alpha}_{t}, \fll^{\alpha}_{t}[2]).$$
Recall that we denoted $K_n$ the kernel of the n-th power map $T \rightarrow T$. We have an isomorphism $T \cong \widetilde{T}/K \cong \widetilde{T}\times^{K_n} \{\pt\}$ where the last space is the quotient under the action of $K_n$ by multiplication on $\widetilde{T}$. We also denote by $\Rep_{\bk}(K_n)$ the category of finite dimensional $\bk$-representation of $K_n$.
One has the following sequence of equivalences of categories: 
$$ D^b_{\Tilde{T}}(T, \bk) \cong D^b_{\Tilde{T}}(\Tilde{T}\times^{K_n}\{\pt\}, \bk)\cong D^b_{K_n}(\{\pt\}, \bk)\cong D^b(\Rep_{\bk}(K_n)).$$
and this last category is the derived category of a semisimple abelian category (since the cardinality of $K_n$ is prime to $\ell$). Now $D^{\mathrm{LY}}_{ T, \Lambda}(\CX_{\alpha}, \bk)_{\fll^T_{t}}$ is a full subcategory of $D^b_{\Tilde{T}, \Lambda}(\CX_{\alpha}, \bk)$ and the pullback functor $D^b_{\Tilde{T}, \Lambda*ù}(T, \bk)\rightarrow D^b_{\Tilde{T}, \Lambda}(\CX_\alpha, \bk)$ is fully faithful, so
we see that the above $\Hom$-space vanishes and we get the result.
 \end{proof}

\begin{cor}
The realization functor $D^b\mathrm{P}^{\mathrm{LY}}_{T, \Lambda}(\CX, \bk)_{\small{\fll^T_t}}\rightarrow D^{\mathrm{LY}}_{T, \Lambda}(\CX, \bk)_{\small{\fll^T_t}}$ is an equivalence of categories.
\end{cor}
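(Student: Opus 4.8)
The plan is to imitate \cite[\S 3.3]{BGS}: deduce the equivalence from the highest weight structure just established, by reducing it to an $\Ext$-comparison between standard and costandard objects, and ultimately to the single-stratum computation of \S\ref{section single stratum}. Write $\mathcal{A} = \mathrm{P}^{\mathrm{LY}}_{T,\Lambda}(\CX,\bk)_{\fll^T_t}$ and $\mathcal{D} = D^{\mathrm{LY}}_{T,\Lambda}(\CX,\bk)_{\fll^T_t}$, and let $\mathrm{real}\colon D^b(\mathcal{A}) \to \mathcal{D}$ denote the realization functor (available from a filtered enhancement of $\mathcal{D}$, as in \cite[\S 3.1]{BBD}); it is triangulated, is $t$-exact for the standard $t$-structure on $D^b(\mathcal{A})$ and the perverse $t$-structure on $\mathcal{D}$, and restricts to the identity on $\mathcal{A}$. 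First I would dispatch essential surjectivity: since the perverse $t$-structure on $\mathcal{D}$ is bounded, the triangulated subcategory of $\mathcal{D}$ generated by $\mathcal{A}$ is all of $\mathcal{D}$, and the essential image of $\mathrm{real}$ is a triangulated subcategory containing $\mathcal{A}$. So the entire content is full faithfulness.

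Next I would record that each of the families $\{\De(\alpha)_{\fll^T_t}\}_{\alpha \in \Lambda}$ and $\{\Na(\alpha)_{\fll^T_t}\}_{\alpha \in \Lambda}$ generates $D^b(\mathcal{A})$ (and likewise $\mathcal{D}$) as a triangulated category: each $\IC(\alpha)_{\fll^T_t}$ is a subquotient of $\De(\alpha)_{\fll^T_t}$ whose remaining composition factors lie strictly below $\alpha$, so Noetherian induction on the finite poset $\Lambda$ builds every simple object, hence all of $\mathcal{A}$, from the $\De$'s (dually for the $\Na$'s). Then a standard two-step dévissage reduces full faithfulness to the claim that
$$\Hom_{D^b(\mathcal{A})}\bigl(\De(\alpha)_{\fll^T_t}, \Na(\beta)_{\fll^T_t}[n]\bigr) \longrightarrow \Hom_{\mathcal{D}}\bigl(\De(\alpha)_{\fll^T_t}, \Na(\beta)_{\fll^T_t}[n]\bigr)$$
is an isomorphism for all $\alpha,\beta \in \Lambda$ and $n \in \Z$: the objects $X$ for which $\Hom_{D^b(\mathcal{A})}(\De(\alpha)_{\fll^T_t}, X[m]) \to \Hom_{\mathcal{D}}(\mathrm{real}\,\De(\alpha)_{\fll^T_t}, (\mathrm{real}\,X)[m])$ is bijective for every $m$ form a triangulated subcategory (five lemma) containing each $\Na(\beta)_{\fll^T_t}$, hence all of $D^b(\mathcal{A})$; running the dual argument in the first variable completes the reduction.

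Finally I would identify the two sides of the displayed map. For $n < 0$ both vanish by $t$-exactness, and for $n = 0$ the map is an isomorphism since $\mathcal{A}$ sits in $\mathcal{D}$ as the heart of a $t$-structure, so the work is for $n > 0$. On the left, $\Hom_{D^b(\mathcal{A})}(\De(\alpha)_{\fll^T_t}, \Na(\beta)_{\fll^T_t}[n])$ is the Yoneda group $\Ext^n_{\mathcal{A}}(\De(\alpha)_{\fll^T_t}, \Na(\beta)_{\fll^T_t})$, which vanishes for $n > 0$ by the $\Ext$-orthogonality of standard and costandard objects in a highest weight category (\cite[\S 7]{R1}). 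On the right, I would argue as in the proof of the theorem above: by adjunction one trades $\De(\alpha)_{\fll^T_t} = (j_\alpha)_!(-)$ and $\Na(\beta)_{\fll^T_t} = (j_\beta)_\ast(-)$; when $\alpha \neq \beta$, one of the two adjunctions kills the $\Hom$ (either $(j_\beta)^\ast(j_\alpha)_! = 0$ since $\CX_\beta \not\subseteq \overline{\CX_\alpha}$, or $(j_\alpha)^!(j_\beta)_\ast = 0$ since $\CX_\alpha \cap \overline{\CX_\beta} = \emptyset$); when $\alpha = \beta$, the identity $(j_\alpha)^\ast(j_\alpha)_! = \mathrm{id}$ reduces the computation to $\Hom$'s in $D^{\mathrm{LY}}_{T,\Lambda}(\CX_\alpha, \bk)_{\fll^T_t}$ between a shifted simple object and its shifts, which vanish in nonzero degree because that category embeds fully faithfully — via pullback along $\CX_\alpha \cong T \times \A^{n_\alpha} \to T$ together with the chain $D^b_{\widetilde{T}}(T,\bk) \cong D^b_{K_n}(\mathrm{pt},\bk) \cong D^b(\Rep_{\bk}(K_n))$ used in the theorem — into the derived category of the semisimple category $\Rep_{\bk}(K_n)$. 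Comparing the two sides shows the displayed map is an isomorphism.

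I expect the main obstacle to be organizational rather than conceptual: one must make sure the realization functor is genuinely $t$-exact and that the generation and dévissage reductions are set up correctly. Beyond that there is no new geometric input — the single-stratum semisimplicity already in hand does all the substantive work — and the statement is precisely the monodromic counterpart of \cite[\S 3.3]{BGS}.
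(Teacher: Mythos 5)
Your argument is correct and spells out in detail the dévissage that the paper delegates to \cite[Corollaries 3.2.2 and 3.3.2]{BGS}: reduce full faithfulness to an isomorphism on $\Hom(\De(\alpha)_{\fll^T_t}, \Na(\beta)_{\fll^T_t}[n])$, then for $n>0$ observe that the left-hand side vanishes by Ext-orthogonality of standard and costandard objects in a highest weight category while the right-hand side vanishes by the adjunction computation that lands in the single-stratum semisimple category. This is exactly the route the paper's one-line proof intends, just made explicit.
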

\begin{proof}
This can be proved using the arguments of \cite[Corollaries 3.2.2 and 3.3.2]{BGS}.
\end{proof}


\begin{thebibliography}{AAAAAA}

\bibitem[BaR]{BaR} Pierre Baumann, Simon Riche; \textit{Notes on the geometric Satake equivalence}, Relative aspects in Representation Theory, Langlands Functoriality and Automorphic Forms, 2221, Springer, Lecture Notes in Mathematics, pp. 1-134, (2018).  

\bibitem[BBD]{BBD} Alexander A. Beilinson, Joseph N. Bernstein, Pierre Deligne; \textit{Faisceaux pervers}, Analyse et topologie sur les espaces singuliers, I (Luminy, 1981), Ast\'erisque, 100, Soci\'et\'e Math\'ematique de France, pp. 5-171, Paris, (1982). 

\bibitem[BGS]{BGS} Alexander A. Beilinson, Victor Ginzburg, Wolfgang Soergel; \textit{Koszul duality patterns in representation theory},
Journal of the American Mathematical Society 9, no. 2, pp. 473-527, (1996).

\bibitem[BL]{BL} Joseph Bernstein, Valery Lunts; \textit{Equivariant Sheaves and Functors}, Lecture Notes in Mathematics 1578, Springer-Verlag (1991).

\bibitem[BeR]{BeR} Roman Bezrukavnikov, Simon Riche; \textit{A topological approach to Soergel theory}, preprint, arXiv:1807.07614. 


\bibitem[BY]{BY} Roman Bezrukavnikov, Zhiwei Yun; \textit{On Koszul duality for Kac-Moody groups},
Representation Theory 17, pp. 1–98, (2013). 



\bibitem[CG]{CG} Neil Chriss, Victor Ginzburg; \textit{Representation Theory and Complex Geometry}, Modern Birkh\"auser Classics, Reprint of the 1997 edition (2010).


\bibitem[Ho]{Ho} Gerhard H. Hochschild; \textit{Basic Theory of Algebraic Groups and Lie Algebras}, Springer-Verlag New York Inc., (1981).


\bibitem[KS]{KS} Kashiwara Masaki, Pierre Schapira; \textit{Sheaves on Manifolds}, vol 292 of ``A series of Comprehensive Studies in Mathematics" Springer-Verlag, Second Reprint (2002).

\bibitem[LC]{LC} Jue-Le, Xiao-Wu Chen; \textit{Karoubianness of a triangulated category}, Journal of Algebra 310, pp. 452-457, (2007).

\bibitem[LY]{LY} George Lusztig, Zhiwei Yun; \textit{Endoscopy for Hecke ategories and character sheaves}, preprint, arXiv:1904.01176.


\bibitem[R]{R1} Simon Riche; \textit{Geometric Representation Theory in positive characteristic}, M\'emoire d'habilitation, Universit\'e Blaise Pascal (Clermont Ferrand 2), https://tel.archives-ouvertes.fr/tel-01431526 (2016). 

\bibitem[SS]{SS} Tony A. Springer, Robert Steinberg;
\textit{Seminar on Algebraic Groups and Related Finite Groups} (Institute for Advanced Study, Princeton), Lecture Notes in Mathematics, Vol. 131 Springer, Berlin, pp. 167-266, (1968/69).


\bibitem[So]{So} Wolfgang Soergel; \textit{Kategorie O, perverse Garben und Moduln \"uber den Koinvarianten zur Weylgruppe},
Journal of the American Mathematical Society 3, no. 2, pp. 421-445 (1990). 



\bibitem[Sp]{Sp} Tony Albert Springer; \textit{Linear Algebraic Groups}, Second Edition, Modern Birkh\"auser Classics, Reprint of the 1998 second edition (1998).

\bibitem[To]{To} Burt Totaro; \textit{The torsion index of the spin groups}, Duke Mathematical Journal, Vol. 129, No. 2, pp. 249-289 (2005). 

\bibitem[Ve]{Ve} Jean-Louis Verdier; \textit{Sp\'ecialisation de faisceaux et monodromie mod\'er\'ee}, Analyse et topologie sur les espaces singuliers, II, III (Luminy, 1981), pp. 332-364,
Ast\'erisque, 101-102, Soci\'et\'e Math\'ematique de France, (1983). 
\end{thebibliography}
\end{document}